\numberwithin{equation}{section}
\newtheorem{lemma}{Lemma}[section]
\newtheorem{theorem}{Theorem}
\newtheorem{definition}{Definition}[section]
\newtheorem{corollary}{Corollary}[section]
\newtheorem{proposition}{Proposition}[section]
\newtheorem{remark}{Remark}[section]
\newtheorem{example}{Example}[section]
\newtheorem{assumption}{Assumption}
\newcommand{\RR}{{\mathbb R}}
\newcommand{\ZZ}{{\mathbb Z}}
\newcommand{\NN}{{\mathbb N}}
\newcommand{\af}{\alpha}
\newcommand{\lm}{\lambda}
\newcommand{\ep}{\epsilon}
\newcommand{\ra}{\rightarrow}
\newcommand{\deq}{\stackrel{\rm d}{=}}
\newcommand{\qandq}{\quad\mbox{and}\quad}
\newcommand{\qforq}{\quad\mbox{for }}
\newcommand{\qasq}{\quad\mbox{as }}
\newcommand{\qforallq}{\quad\mbox{for all }}
\def\tinf{\rightarrow\infty}
\def\Ra{\Rightarrow}
\newcommand{\sB}{{\mathscr B}}
\newcommand{\bes}{\begin{equation*}}
\newcommand{\ees}{\end{equation*}}
\newcommand{\bequ}{\begin{equation}}
\newcommand{\eeq}{\end{equation}}
\newcommand{\bsplit}{\begin{split}}
\newcommand{\esplit}{\end{split}}
\newcommand{\bea}{\begin{eqnarray}}
\newcommand{\eea}{\end{eqnarray}}
\newcommand{\beas}{\begin{eqnarray*}}
\newcommand{\eeas}{\end{eqnarray*}}
\newcommand{\benum}{\begin{enumerate}}
\newcommand{\eenum}{\end{enumerate}}
\newcommand{\RNum}[1]{\uppercase\expandafter{\romannumeral #1\relax}}
\newcommand{\Prob}[1]{\mathbb{P}\left(#1\right)}
\newcommand{\E}{\mathbb{E}}
\newcommand{\R}{\mathbb{R}}
\newcommand{\of}[1]{\left(#1\right)}
\newcommand{\off}[1]{\left[#1\right]}
\newcommand{\offf}[1]{\left\{#1\right\}}
\newcommand{\arr}{\rightarrow}
\newcommand{\Arr}{\Rightarrow}
\begin{document}

\begin{frontmatter}
\title{Asymptotic Optimality of the Binomial-Exhaustive Policy for Polling Systems with Large Switchover Times}
\runtitle{Asymptotically Optimal Control for Polling Systems}

\begin{aug}
	\author[\empty]{\fnms{Yue} \snm{Hu}\textsuperscript{$\ast$}\ead[label=e1]{yhu22@gsb.columbia.edu}},
	\author[\empty]{\fnms{Jing} \snm{Dong}\textsuperscript{$\dagger$}\ead[label=e2]{jing.dong@gsb.columbia.edu}}
	\and
	\author[\empty]{\fnms{Ohad} \snm{Perry}\textsuperscript{$\ddagger$}\ead[label=e3]{ohad.perry@northwestern.edu}}
	\address[]{\textsuperscript{$\ast$}Decision, Risk, and Operations, Columbia Business School,
		\printead{e1}}
	
	\address[]{\textsuperscript{$\dagger$}Decision, Risk, and Operations, Columbia Business School,
		\printead{e2}}
	
	\address[]{\textsuperscript{$\ddagger$}Department of Industrial Engineering and Management Sciences, Northwestern University,
		\printead{e3}}
	
	
\end{aug}

\begin{abstract}
We study an optimal-control problem of polling systems with large switchover times, when a holding cost is incurred on the queues.
In particular, we consider a stochastic network with a single server that switches between several buffers (queues) according to a pre-specified
order, assuming that the switchover times between the queues are large relative to the processing times of individual jobs.
Due to its complexity, computing an optimal control for such a system is prohibitive, and so we instead search for an asymptotically optimal control.
To this end, we first solve an optimal control problem for a deterministic relaxation (namely, for a fluid model),
that is represented as a hybrid dynamical system.
We then ``translate'' the solution to that fluid problem to a binomial-exhaustive policy for the underlying stochastic system, and
prove that this policy is asymptotically optimal in a large-switchover-time scaling regime,
provided a certain uniform integrability (UI) condition holds.
Finally, we demonstrate that the aforementioned UI condition holds in the following cases:
(i) the holding cost has (at most) linear growth, and all service times have finite second moments;
(ii) the holding cost grows at most at a polynomial rate (of any degree), and the service-time distributions possess finite moment generating functions.
\end{abstract}

\begin{keyword}[class=MSC2010]
	\kwd[Primary ]{60K25}
	\kwd{60F17}
	\kwd{90B15}
	\kwd[; secondary ]{37A50}
\end{keyword}

\begin{keyword}
	\kwd{Polling system}
	\kwd{hybrid dynamical system}
	\kwd{fluid-control problem}
	\kwd{asymptotic optimality}
\end{keyword}

\end{frontmatter}

\section{Introduction} \label{secIntro}

A polling system is a queueing network in which a single server attends multiple queues according to a pre-specified routing mechanism.
This class of models has been extensively studied since the 1950's, starting with
\cite{mack1957efficiency}, and have been employed in numerous application settings, such as
computer-communication \citep{bux1981local}, production \citep{federgruen1996stochastic,olsen1999practical}, inventory-control \citep{winands2011stochastic},
transportation \citep{altman1992elevator,van2006bounds}, and healthcare \citep{cicin2001application,vlasiou2009two}.
We refer to \cite{levy1990polling,takagi1997queueing,vishnevskii2006mathematical,boon2011open,borst2018polling}
for comprehensive reviews of the relevant literature.

Exact analysis of polling systems is in general prohibitively hard; \cite{resing1993polling} argues that,
unless the switching policy has a certain branching property, the system is not amenable to exact analysis.
However, even for those ``branching-type'' policies, results are typically expressed via multi-dimensional transforms that can be hard to analyze.
Thus, despite being among the most extensively studied class of stochastic networks \citep{boon2011applications},
little is known about how to optimally control polling systems, except for special cases, such as the two-queue system in \cite{hofri1987optimal},
a symmetric cost structure in \cite{levy1990dominance,liu1992optimal}, or a limited control problem that is solved for a
subset of the queues in \cite{duenyas1996heuristic,van1997polling,matveev2016global}.
As will be seen below, due to the dimensionality and the switching dynamics of the queue process,
finding an optimal control is  a difficult problem even for deterministic polling systems.

\paragraph*{Scalings of the Switchover Times}
To achieve analytical simplification, it is sometimes assumed that the server's switchover times are instantaneous.
This assumption is reasonable to make when those switchover times are sufficiently small relative to the service times,
{\em and} the total traffic intensity is not too close to $1$.
(If the system is nearly critical, then even small deviations from the ``ideal'' modeling assumptions can have substantial negative impacts on the performance;
see the discussion in \cite[Section 9]{perry2016chattering}.)
However, switchover times are often quite large, and sometimes can even be considered to be of a {\em larger order of magnitude}
than the service durations, see, e.g., \cite{nahmias1984stochastic, federgruen1994approximating,olsen2001limit,winands2011stochastic}.
In such cases, one can turn the analytical disadvantage of having switchover times into an advantage by taking limits as those switchover
times increase without bound. (This approach is analogous to the one in which the switchover times are assumed to be instantaneous, which is in turn tantamount
to taking limits as the switchover times decrease to $0$.)
This large-switchover-times asymptotic approach was taken in \cite{olsen2001limit, van1999delay, winands2007polling, winands2011branching}
to approximate stationary performance measures, and was identified as an important future direction in \cite{boon2011open,borst2018polling}.
The same approach is taken here to solve an optimal-control problem in an appropriate asymptotic sense that will be
explained below.

\subsection{Optimal Control of Stochastic Networks} \label{secFCPscheme}

A standard approach in the stochastic-network literature to solving optimal-control problems follows an asymptotic scheme that was first
proposed by \cite{harrison1988brownian}. This approach can be roughly summarized as follows:
(I) formulate and solve a Brownian control problem (BCP) inspired by a heavy-traffic approximation for the system;
(II) ``translate'' the resulting optimal Brownian control to a control for the stochastic system;
and (III) prove that the control for the system is asymptotically optimal in an appropriate sense.

In this paper, we follow the main line or reasoning of the above scheme, but with important differences.
First, instead of a BCP, we solve a fluid-control problem (FCP) related to the stochastic control problem.
Moreover, an important step in solving the BCP in Harrison's scheme involves solving an {\em equivalent workload formulation}
that is rigorously achieved for the controlled stochastic system
by showing that {\em state-space collapse} (SSC) holds asymptotically, namely, that the limit diffusion process
is confined to a subspace having a lower dimension than that of the prelimit; see, e.g., \cite{harrison1997dynamic}.
Under fluid scaling, SSC corresponds to sliding motion of the fluid limit on a lower-dimensional manifold,
as explained in \cite[Section 1]{perry2016chattering}.
For a specific example, see \cite{atar2011asymptotic}, which considers the problem of asymptotically minimizing long-run average costs
in an overloaded many-server fluid regime. (There is customer abandonment with rate $\theta_i$ in queue $i$, keeping the system stable despite being overloaded.)
The proposed $c\mu/\theta$ priority rule induces asymptotic SSC in the stationary fluid model,
because all the queues that receive service, except the ones with the smallest $c\mu_i/\theta_i$ parameters, are asymptotically null in stationarity;
see Equation (18) in this reference.
However, SSC does not occur in our setting, because all the queues increase at fixed rates (their respective arrival rates)
during the switchover times, which are non-negligible in the fluid time scale.
In particular, under the large-switchover-times asymptotic, there is no reduction in the dimensionality of the limiting process,
implying that the dynamics of the fluid limits are necessarily discontinuous. (In fact, the fluid limits may not
even exhibit continuous dependence on their initial condition, and so do not adhere to the classical definition of well-posed dynamical systems.)

We remark that asymptotic SSC can occur in polling systems if the switchover times are small relative to the time scaling used to derive the limiting process.
Such SSC following from the averaging principles are proved in \cite{coffman1995polling} for polling systems with zero switchover times,
and in \cite{coffman1998polling} for systems with positive switchover times that do not scale in the limit (i.e., are negligible asymptotically).

\paragraph*{The Optimal-Control Problem}
We consider the (asymptotically) optimal-control problem for a polling system in which the server moves
among the different queues in a fixed order that is specified by a table,
with the objective of minimizing a long-run average holding cost on the queue process.
In this setting, a dynamic control is a state-dependent server-routing policy which determines
when the server should switch away from its current queue to the next queue in the table.

Since solving the optimal-control problem is prohibitively hard, we seek a control that is optimal in an appropriate asymptotic sense.
To this end, we consider a sequence of systems under a functional weak law of large numbers (FWLLN) scaling, and analyze the resulting fluid limits
as solutions to a hybrid dynamical system (HDS).\footnote{We use the acronym HDS for both singular and plural forms (system and systems).}
We then identify an optimal fluid control for the HDS, which we ``translate'' to a control for the underlying polling system.
In particular, the control we propose is the well-known binomial-exhaustive policy whose specific control parameters are taken directly from the optimal
fluid control; see Sections \ref{secModel} and \ref{SecTranslatingPRC} for more details.
Finally, we prove that the binomial-exhaustive policy (with the fluid-optimal control parameters)
is asymptotically optimal under the fluid scaling, in that it asymptotically achieves a lower bound on the long-run average cost.

The proof of asymptotic optimality requires that the sequence of cumulative holding costs incurred over a table cycle in stationarity is uniformly integrable (UI).
We demonstrate that the UI condition holds in two important cases:
(i) when the holding cost is linear and the service times have finite second moments; and
(ii) when the holding cost grows at most at a polynomial rate (of any degree), and all service times have finite moment generating functions (m.g.f.).

\subsection{Conventions About Notation}

All the random variables and processes are defined on a single probability space $(\Omega, \mathcal F, P)$.
We write $\E$ to denote the expectation operator, and $\E_\mathbb{P}$ when we want to emphasize that the expectation is with respect
to a specific probability measure $\mathbb P$.
We let $\RR$, $\ZZ$ and $\NN$ denote the sets of real numbers, integers and strictly positive integers, respectively,
$\ZZ_+ := \NN \cup \{0\}$, and $\RR_+ := [0,\infty)$. For $k \in \NN$, we let
$\RR^k$ denote the space of $k$-dimensional vectors with real components, and denote these vectors with bold letters and numbers; in particular,
we write $\boldsymbol 1 := (1,\dots, 1)$ for the vector of $1$'s.
We let $D^k$ denote the space of right-continuous $\RR^k$-valued functions (on arbitrary finite time intervals) with limits everywhere,
endowed with the usual Skorokhod $J_1$ topology; see Chapter 11 of \cite{whitt2002stochastic}.
We let $D := D^1$.
We use $C^k$ (and $C := C^1$) to denote the subspace of $D^k$ of continuous functions.
It is well-known that the $J_1$ topology relativized to $C^k$ coincides with the uniform topology on $C^k$, which is induced by the norm
\begin{equation*}
||x||_t := \sup_{0 \leq u \leq t} \|x(u)\|,
\end{equation*}
where $||x||$ denotes the usual Euclidean norm of $x \in \R^k$.
We use ``$\Ra$" to denote weak convergence of random variables in $\RR^k$, and of stochastic processes over compact time intervals.

For $f : \R^k \arr [0, \infty)$, $g : \R^k \arr [0, \infty)$ and $a \in \R_+^k \cup \{\infty\}$
we write $f(x) = O(g(x))$ as $x \arr a$ if $\limsup_{x \arr a} f(x)/g(x) < \infty$, and $f(x) = o(g(x))$ if  $\lim_{x \arr a} f(x)/g(x) = 0$.

Given a sequence of random variables $\{X^n : n \geq 1\}$ and a sequence of non-negative real numbers $\{a^n : n \geq 1\}$,
we write $X^n = O_p(a^n)$ if $||X^n||/a^n$ is stochastically bounded, i.e., for any $\ep >0$, there exist finite $M, N \in \NN$ such that $\Prob{||X^n||/a^n > M} < \ep$
for all $n \geq N$. We write $X^n = o_p(a^n)$ if $||X^n||/a^n$ converges to zero in probability,
and $X^n = \Theta_p(a^n)$ if $X^n$ is $O_p(a^n)$ but not $o_p(a^n)$.
We write that a sequence of stochastic processes $\{X^n : n \ge 1\}$ is $O_p(a^n)$, $o_p(a^n)$, and $\Theta_p(a^n)$
if the corresponding property holds for $\|X^n\|_t$ for any $t\in(0,\infty)$.

For $x,y \in \RR$, we write $x \wedge y := \max\{x, y\}$, $x \vee y := \min\{x, y\}$, and $x^+ := \max\{x, 0\}$.
For a function $x \in D$, $x(a-)$ denotes the left-hand limit at $a$, i.e., $x(a-) := \lim_{t \uparrow a} x(t)$ is the left-hand limit at the point $a$.
For a vector $\mathbf v \in \RR^\ell$, $\ell \in \NN$, we use $\mbox{dim}(\mathbf v)$ to denote the dimension of $\mathbf v$; namely, $\mbox{dim}(\mathbf v) = \ell$.

We use a ``bar" to denote fluid-scale quantities: $\bar X^n := X^n/n$ for a sequence of random variables $\{X^n : n \geq 1\}$,
and $\bar X^n(t) := X^n(nt)/n$, $t \ge 0$, if the $\{X^n : n \ge 1\}$ is a sequence of processes.

\subsection{Organization of the Paper}
The rest of the paper is organized as follows. In Section \ref{secModel} we introduce the model, the main results, and
a roadmap for our approach to proving those result.
In Section \ref{secFluidModel} we consider a deterministic relaxation (a fluid model) to the optimal control problem,
which is characterized as the set of solutions to an HDS. It is also shown that the fluid model is related to the sequence of stochastic systems
via functional weak laws.
In Section \ref{Sec:FluidControl} we analyze the FCP, propose an optimal fluid control, and establish important qualitative
properties of the fluid model under this control.
In Section \ref{secControlsForTheStochasticSystem} we relate the proposed fluid control to the binomial-exhaustive policty.
In Section \ref{secAsympOpt} we prove that the binomial-exhaustive policy is asymptotically optimal in our setting.
Section \ref{secProofsofMainResults} is dedicated to the proofs of the main theorems.
We conclude in Section \ref{secSummary}.
Complementary proofs appear in the appendix.

\section{Problem Formulation and Main Results} \label{secModel}
We consider a polling system with $K$ queues numbered $1,\dots, K$.
Customers (or jobs) arrive at queue $k \in \mathcal{K} := \{1,...,K\}$
according to a Poisson process with rate $\lambda_k > 0$, and wait for their turn to be served in a buffer with infinite capacity (so that no customers are blocked).
We refer to customers who arrive to queue $k$ as class-$k$ customers.
The service times for class-$k$ customers are independent and identically distributed (i.i.d.)\ random variables with mean $1/\mu_k < \infty$.
At this point, we do not impose any other assumptions on the service time distributions (other than assuming that they all have finite means),
but further assumptions on the existence of higher moments will be needed to prove the aforementioned UI condition.
We denote by $S_k$ a generic random variable that has the service time distribution of class-$k$ customers.

A single server visits the queues periodically according to a fixed order specified by a {\em table}.
In particular, the table consists of $I$ stages, $I \ge K$, and the queue to be served at each stage is defined by a polling function
$p: \mathcal I \arr \mathcal K$, for $\mathcal I := \{1,\dots, I\}$,
where $p(i)$ is the queue attended (polled) by the server at stage $i$, and $(p(i), i \in \mathcal{I})$ is the table.
Note that a queue may appear more than once in a table, in which case that queue is attended by the server in two or more nonconsecutive stages.
We refer to each such attendance as a {\em visit} (of the server to the queue).
We refer to the starting time of a visit as a polling epoch and the ending time of a visit as a departure epoch (of the server from the queue).
A {\em table cycle} is the time elapsed between two consecutive polling epochs of stage $1$ (the first visit to queue $p(1)$) in $\mathcal I$.
The table is said to be {\em cyclic} if all the queues appear in the table exactly once (so that
the server visits each queue exactly once in a table cycle), in which case $\mathcal K = \mathcal I$.

For $i \in \mathcal I$, we assume that the switchover time of the server from stage $i$ to stage $i+1$ is a random variable $V_i$
with mean $s_i := \E[V_i] < \infty$, that is independent of all other random variables and processes in the system.
We let $s := \sum_{i\in \mathcal{I}} s_i$ denote the total expected switchover time incurred within a table cycle, and assume that $s > 0$.

For a given table, the {\em switching policy} (the control) is the set of rules specifying when the server should switch from each stage to the next.
Note that if a queue is visited more than once in the table, then the control may prescribe a different switching rule for each visit.
In addition, we allow the switching policy to induce an {\em augmented table}
in which the queues appear in a periodic pattern that is an $L$ multiple of the pattern of the basic table, for some integer $L \ge 2$.
We refer to a switching policy inducing an $L$-cycle augmented table as an $L$-cycle control, and denote
the set of stages in that augmented table by $\mathcal{I}^L := \{1,...,IL\}$.
We refer to the original table $(p(i), i \in \mathcal I)$ as the {\em basic table} and to a corresponding control, whose switching rules are repeated
after $I$ stages, as a one-cycle control.

A {\em server cycle} is the time elapsed between two consecutive polling epochs of stage $1$ in $\mathcal I^L$.
Thus, a server cycle in an $L$-cycle control consists of $L$ table cycles (and the server cycle is equal to the table cycle under a one-cycle control).
Under an $L$-cycle control,
an $\ell$th table cycle is time elapsed between stage $1 + (\ell - 1)I$ and stage $1+ \ell I$ in the augmented table,
for $1 \leq \ell \leq L$. (That is, the time it takes the server to complete the $\ell$th basic table within the augmented table.)
Further, we let the polling function $p : \mathcal{I}^L \arr \mathcal{K}$ map a stage in the augmented table
to the queue being visited at that stage.
The corresponding augmented table is given by $(p(i), i \in \mathcal{I}^L)$.
The expected total switchover time in a corresponding server cycle is then $s L$.

For concreteness, consider a polling system with three queues ($K=3$) visited according to the basic table $(1,2,3,2,3)$.
The basic table contains five stages ($I=5$): queue $2$ is visited in stages $2$ and $4$,
and queue $3$ is visited in stages $3$ and $5$. Hence, $p(1) = 1$, $p(2) = p(4) = 2$ and $p(3) = p(5) = 3$.
Under a one-cycle control ($L$=1), the basic and $L$-cycle augmented table, as well as table and server cycles, are all equivalent notions.
In contrast, under a two-cycle control ($L=2$), the augmented table is given by $(1,2,3,2,3; \, 1,2,3,2,3$), so that queue $1$ is visited twice,
and queues $2$ and $3$ are each visited four times during a server cycle, which now consists of two table cycles.
Of course, by a two-cycle control we mean that the switching rule of queue $p(i)$ is different than the rule of queue $p(i+5)$,
for at least one $i \in \{1,\dots, 5\}$.

\begin{remark} \label{remTable}
{\em The term ``basic table'' typically suggests that the order at which the server visits the queues has no repeated pattern.
It may therefore seem artificial to consider augmented tables with $L \ge 2$ consecutive repetitions of the basic table.
However, one cannot rule out at the outset the possibility that an $L$-cycle control, for some $L > 1$,
is better (reduces the cost) than a one-cycle control. Further, by considering $L$-cycle controls we can prove in some important special cases
that the asymptotically optimal one-cycle control is the overall asymptotically optimal control.}
\end{remark}

Let $\rho_k := \lm_k/\mu_k$ denote the traffic intensity corresponding to queue $k$, and let
\begin{equation*}
\rho := \sum_{k \in \mathcal{K}} \lambda_k / \mu_k.
\end{equation*}
We assume that $\rho < 1$, so that the system can be stabilized in the sense that there exist service policies
under which the queue process admits a stationary distribution \citep{fricker1994monotonicity,boon2011applications}.
We note that the system is stable under the binomial-exhaustive policy if and only if $\rho < 1$.

Let $U^{(0)}:=0$, and for $m \ge 1$, let $U^{(m-1)}$ denote the beginning of the $m$th server cycle
(end of the $(m-1)$st server cycle), where without loss of generality, we take time $0$ to be a polling epoch of stage $1$.
Let $A^{(m)}_{i}$ and $D^{(m)}_{i}$, $i \in \mathcal{I}^L$, denote the polling and departure epochs of stage $i$ during the $m$th server cycle.
Then, $B^{(m)}_{i} := D^{(m)}_{i} - A^{(m)}_{i}$ is the {\em busy time} at stage $i$ in the $m$th server cycle.
Lastly, let $T^{(m)}$ be the length of the $m$th server cycle, i.e.,
$T^{(m)}=U^{(m)} - U^{(m-1)}$ and $T^{(m)}=\sum_{i \in \mathcal{I}^L}(B^{(m)}_{i}+V_i^{(m)})$, where $V_i^{(m)} \stackrel{d}{=} V_i$.
Under a given switching policy $\pi$, we denote by $Q_{\pi, k}(t)$ the number of customers in queue $k$ at time $t$, $k \in \mathcal{K}$,
and let $Q_\pi(t) := ( Q_{\pi, k}(t), k \in \mathcal{K} )$, $t \geq 0$.

\paragraph*{The Optimal Control Problem}
Let $\psi: \RR_+^K \arr \RR_+$ denote the holding cost, so that $\psi(Q(t))$ is the cost incurred at time $t$ when
the state of the queue is $Q(t)$.
We assume that $\psi$ is non-negative, non-decreasing and continuous.
Our goal is to find an asymptotically optimal control $\pi$ within a family $\Pi$ of {\em admissible controls}
(see Definition \ref{def:MarkovPolicy} below and Section \ref{secControlsForTheStochasticSystem}),
that minimizes the following expected long-run average costs
\begin{equation} \label{objectiveExp}
\liminf_{t \rightarrow \infty}  \frac{1}{t} \,\, \E\off{\int_{0}^{t} \psi \of{Q_\pi(u)} du} \quad \text{and} \quad 	\limsup_{t \rightarrow \infty}  \frac{1}{t} \,\, \E\off{\int_{0}^{t} \psi \of{Q_\pi(u)} du} .
\end{equation}
Let
\bequ \label{Q-DTMC}
\tilde Q_\pi(m) := Q_\pi(U^{(m)}), \quad m \ge 0.
\eeq
The service policies we consider are state-dependent controls that may depend on the value of $\tilde Q_\pi(m)$, such that
the process $\tilde Q_\pi := \{\tilde Q_\pi(m) : m \ge 0\}$ is a discrete-time Markov chain (DTMC) (see Lemma \ref{lmMarkovControl} below),
and is therefore regenerative whenever it is positive recurrent (as must be the case under an optimal policy). We refer to this DTMC as {\em the embedded DTMC},
and remark that, under the asymptotically optimal control we propose, namely, under the binomial-exhaustive policy,
the embedded DTMC is ergodic; see \cite{fricker1994monotonicity}.

\paragraph*{The Family of Admissible Controls} \label{secAdmissible control}
We say that a switching control is non-idling if the server does not idle while attending a non-empty queue, and in addition, it
switches immediately to the next queue in the table if it empties the attended buffer.
On the other hand, if the server finds a buffer empty upon its polling epoch, we allow it to wait for work to arrive.
This latter event is asymptotically null, because the server always finds a queue upon arrival to a buffer in the fluid limits,
and thus has no impact on our asymptotic analysis.
Let $\{\mathcal F_t : t \ge 0\}$ denote the $\sigma$-algebra generated by the queue process.

\begin{definition} [admissible control] \label{def:MarkovPolicy}
A switching control is admissible if

(i) The policy is non-idling.

(ii) For $i \in \mathcal{I}^L$, $m \geq 1$, the number of customers served during the busy time at the $i$th stage in the $m$th server cycle
conditional on $Q(A_i^{(m)})$ is independent of $\mathcal F_{A_i^{(m)}}$.

(iii) The policy is non-anticipative. 
\end{definition}

It is significant that the set of admissible controls contains a wide range of controls studied in the literature.
For example, the family of branching-type controls, which includes the exhaustive, gated, binomial-exhaustive, binomial-gated, and Bernoulli-type policies \citep{resing1993polling,levy1988optimization,levy1989analysis}, limited-type policies \citep{boxma1986models,szpankowski1987ultimate}, and base-stock policies \citep{federgruen1996stochastic} are all admissible. More generally, all the policies studied in \cite{fricker1994monotonicity} are admissible.
In particular, the policies considered in \cite{fricker1994monotonicity} satisfy the three requirements in Definition \ref{def:MarkovPolicy}, in addition to
a certain stochastic-monotonicity condition that we do not impose; see Section 2 in this reference.

\paragraph*{The Binomial-Exhaustive Policy}
Let $Q_{p(i)}(A_i^{(m)})$ denote the number of customers in queue $p(i)$ upon its polling epoch in the $m$th server cycle,
$i \in \mathcal{I}^L$, $m \geq 1$. The binomial-exhaustive policy, which was proposed in \cite{levy1988optimization},
is fully characterized by two parameters: an integer $L$, that specifies the number of table cycles contained in a server cycle,
and a vector $\mathbf r=(r_1, \dots, r_{IL}) \in [0,1]^{IL}$, whose component $r_i$ is the ``success probability'' of the binomial
random variable corresponding to stage $i \in \mathcal I^L$.
Note that, if $\sum_{ \{i \in \mathcal{I}^L: p(i) = k \} } r_i = 0$,
then queue $k$ explodes since it never gets served. We therefore consider $\mathbf r$ to be an element in the set
\bequ \label{eq:r_condition_of_sum}
\mathcal{R} := \offf{\mathbf r \in  [0,1]^{IL} : \sum_{ \{i \in \mathcal{I}^L: p(i) = k \} } r_i > 0 \quad \text{for all } k \in \mathcal{K} }.
\eeq

\begin{definition} [binomial-exhaustive policy] \label{def:Binomial}
For $(L, \mathbf r) \in \NN \times \mathcal R$, $i \in \mathcal{I}^L$ and $m \geq 1$,
conditional on the event $\{Q_{p(i)}(A_i^{(m)}) = N\}$, $N \in \ZZ_+$,
the number of customers that the server leaves behind at the departure epoch of queue $p(i)$ is
$N - Y^{(m)}_i(N, r_i)$, where $Y^{(m)}_i(N, r_i)$ is a binomial
random variable with parameters $N$ and $r_i$, which is independent of all other random variables and processes.
\end{definition}
The binomial-exhaustive policy can equivalently be described as one in which the server performs an independent Bernoulli trial for each customer in the queue
at stage $i$, having ``success probability'' $r_i$.
If the outcome of that trial is a ``success,'' then the server serves that customer as well as all of the new arrivals during the service duration
of that customer.
Thus, the server attends the queue polled at stage $i$ in the $m$th server cycle for $Y^{(m)}_i(N, r_i)$ busy periods of an $M/G/1$
queue having arrival rate $\lm_{p(i)}$ and service rate $\mu_{p(i)}$.

\subsection{The Large-Switchover-Time Asymptotic Regime} \label{secLargeSwitchoverTimeScaling}
To carry out our asymptotic analysis, we consider a sequence of systems indexed by $n \geq 1$,
and append a superscript $n$ to all random variables and processes that scale with $n$.
Let $V^n_i$ denote the switchover time from stage $i$ in system $n$.
Under the large-switchover-time scaling, we keep $\lm_k$ and $\mu_k$ fixed (they do not scale with $n$),
and impose the following assumptions on the sequence of switchover times.

\begin{assumption} \label{assum1}
$\bar V_i^n := V_i^n/n \Arr  s_{i} $ as $n \arr \infty$. Further,
$\E\off{V_i^n} = n s_i$\, for all $i \in \mathcal{I}$.
\end{assumption}
We make two remarks: First, we allow $V^n_i = o_p(n)$ for some, but not all, $i \in \mathcal I$, so that $s_i = 0$, but $s > 0$.
Second, the latter part of Assumption \ref{assum1} can be easily relaxed to $\E\off{V_i^n}/n \ra s_i$ as $n\tinf$. However, this relaxation
comes at the expense of more cumbersome notation in some proofs, and has no practical significance (for the actual stochastic system under consideration).

Under the large-switchover-time scaling, the server spends $\Theta_p(n)$ time switching, so that the queues at polling epochs are also of order $\Theta_p(n)$,
namely, the queue process is strictly positive in fluid scale.
Recall that fluid-scaled quantities (random variables, processes and parameters) are denoted with a bar, e.g., $\bar Q^n_{\pi^n}(t) := Q^n_{\pi^n}(nt)/n$.
Let
\begin{equation*} \label{eqLongRunCost}
\bar C^n_{\pi^n}(t) := \frac{1}{t}  \, \int_{0}^{t} \psi \of{\bar Q_{\pi^n}^n(u)} du  ,
\end{equation*}
where $\pi^n$ is the control employed in the $n$th system (and is allowed to depend on $n$).

We say that a sequence of controls $\boldsymbol{\tilde \pi_*} := \{\tilde \pi_*^n : n \ge 1\}$ is asymptotically optimal if
\begin{equation} \label{asyOpt}
\limsup_{n\tinf} \, \limsup_{t\tinf} \,  \E\off{ \bar C^n_{\tilde \pi^n_*}(t) }  \leq
\liminf_{n\tinf} \, \liminf_{t\tinf} \,  \E\off{ \bar C^n_{\pi^n}(t) }   ,
\end{equation}
for any other sequence of admissible controls $\boldsymbol \pi := \{\pi^n : n \ge 1\}$.

%

\begin{remark} \label{RemOnControl}
{\em 
Since we seek an effective control for a given stocahstic system,
the sequence $\boldsymbol{\tilde \pi}_*$ of asymptotically optimal controls should be considered to be a single control whose parameters may depend on $n$.
For example, if a threshold-type control is exercised, then the control parameters (the thresholds) must increase linearly with $n$
in order to appear in the fluid limits.
Hence, there is no abuse of terminology in saying that a control (as opposed to a sequence of controls) is asymptotically optimal.
On the other hand, the elements of $\boldsymbol \pi$ are allowed to change arbitrarily with $n$.}
\end{remark}

\subsection{Summary of Main Results} \label{secSumMain}
Our main result establishes that the binomial-exhaustive policy, with properly selected parameters $(L_*, \mathbf r_*)$,
is asymptotically optimal (among the set of admissible controls) for a large family of cost functions $\psi$.
The specific control parameters $(L_*, \mathbf r_*)$ are computed by solving a corresponding FCP, as will be explained below,
and are referred to as the {\em optimal (control) parameters}.
We thus denote the sequence of binomial-exhaustive policies by $\boldsymbol \pi_*$. Note that the same control parameters $(L_*, \mathbf{r}_*)$ are used for all $n \ge 1$;
in particular, the same control is considered for all the systems along the sequence. This property of the asymptotically optimal control we propose
is attractive, because applying the control in a given system can be done directly, without any engineering considerations which are often needed
in order to determine the size of the control parameters for a specific system.

To formally state our main result, let $\bar T^n := T^n/n$, where $T^n$ is the length of the {\em stationary} server cycle in the $n$th system,
which is finite w.p.1 when the embedded DTMC is positive recurrent.
For each $n \geq 1$ and control $\pi^n$, let
\begin{equation} \label{Psi}
\bar \Psi^n_{\pi^n} := \int_{0}^{\bar T^n} \psi (\bar Q^n_{\pi^n}(u)) du
\end{equation}
denote the cumulative fluid-scaled cost under $\pi^n$ over a stationary server cycle, namely, when $\bar Q^n_{\pi^n}(0)$
is distributed according to a stationary distribution of the embedded DTMC.
Let $(L_*, \mathbf r_*)$ be the optimal FCP parameters, and $c_*$ be the optimal objective value of the FCP. In addition, let $\pi_*^n$ be the binomial-exhaustive policy with these parameters (which are fixed along the sequence).
The following theorem is the main result of the paper.

\begin{theorem} \label{thmMain}
If $\{\bar \Psi^n_{\pi_*^n} : n \geq 1\}$ is UI, then for any sequence of admissible controls $\boldsymbol \pi$,
\begin{equation} \label{eq:eqMain}
\liminf_{n\tinf} \, \liminf_{t \arr \infty} \, \bar C^n_{\pi^n}(t) \ge \lim_{n\tinf} \lim_{t \arr \infty} \bar C^n_{\pi^n_*}(t) = c_* \quad w.p.1.
\end{equation}
\end{theorem}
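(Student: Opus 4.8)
The plan is to establish the two inequalities hidden in \eqref{eq:eqMain} separately: (i) the \emph{achievability} identity $\lim_{n}\lim_{t}\bar C^n_{\pi^n_*}(t)=c_*$ w.p.1, and (ii) the \emph{universal lower bound} $\liminf_{n}\liminf_{t}\bar C^n_{\pi^n}(t)\ge c_*$ w.p.1 for every admissible sequence $\boldsymbol\pi$. A tool used in both parts is the elementary \emph{cycle-time identity}: by flow conservation, in any stationary regime of a stabilizing admissible control the expected work arriving over a server cycle equals the expected work processed (the expected total busy time), which together with $\E[V_i^n]=ns_i$ (Assumption \ref{assum1}) yields $\E_{\nu^n}[\bar T^n]=Ls/(1-\rho)$, a constant depending only on the number $L$ of table cycles of the control, not on $n$ or on the finer details of the policy; here $\nu^n$ denotes the stationary distribution of the embedded DTMC $\tilde Q_{\pi^n}$.

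For achievability, fix $n$. Under $\pi^n_*$ the embedded DTMC is ergodic \cite{fricker1994monotonicity}, so the ergodic theorem for Markov chains, applied to the cumulative cost and to the length of the $m$th server cycle (which, conditionally on the chain's state at the start of that cycle, are generated by fresh randomness), gives $\lim_{t\to\infty}\bar C^n_{\pi^n_*}(t)=\E_{\nu^n}[\bar\Psi^n_{\pi^n_*}]/\E_{\nu^n}[\bar T^n]$ w.p.1, which by the cycle-time identity equals $(1-\rho)\E_{\nu^n}[\bar\Psi^n_{\pi^n_*}]/(L_*s)$. It then remains to prove $\E_{\nu^n}[\bar\Psi^n_{\pi^n_*}]\to\Psi_*$, where $\Psi_*$ is the fluid holding cost accrued over one (fluid) server cycle of the optimal fluid control, so that $\Psi_*(1-\rho)/(L_*s)=c_*$. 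This is where the fluid analysis and the UI hypothesis enter: the binomial thinning concentrates under fluid scaling, so the FWLLN of Section \ref{secFluidModel} applies, and combined with the global-attractor property of the HDS under the optimal fluid control from Section \ref{Sec:FluidControl} and with tightness of the sequence of stationary distributions $\{\bar Q^n_{\pi^n_*}(0)\}$, every subsequential limit of the stationary fluid-scaled trajectory over a server cycle coincides with the unique periodic orbit; hence $\bar\Psi^n_{\pi^n_*}\Rightarrow\Psi_*$. Since the limit is the deterministic constant $\Psi_*$ and $\{\bar\Psi^n_{\pi^n_*}\}$ is UI by assumption, $\E_{\nu^n}[\bar\Psi^n_{\pi^n_*}]\to\Psi_*$, which gives the claimed value $c_*$. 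Because each inner limit in $t$ is a.s.\ equal to a deterministic constant, the double limit is genuine and the statement holds w.p.1.

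For the universal lower bound, fix any admissible $\boldsymbol\pi$ and suppose first that $\tilde Q_{\pi^n}$ is positive recurrent for every $n$ in question (otherwise the queue process escapes to infinity, $\liminf_t\bar C^n_{\pi^n}(t)=\infty\ge c_*$, and that $n$ may be discarded). Then, as in the previous step, $\liminf_t\bar C^n_{\pi^n}(t)=\lim_t\bar C^n_{\pi^n}(t)=\E_{\nu^n}[\bar\Psi^n_{\pi^n}]/\E_{\nu^n}[\bar T^n]=:\ell^n$ is deterministic, and the claim reduces to $\liminf_n\ell^n\ge c_*$. Pass to a subsequence realizing $\liminf_n\ell^n$; if it is $+\infty$ we are done, so assume $\sup_n\ell^n<\infty$ along it, which bounds the stationary expected holding cost and, after a truncation of $\psi$ if needed, yields tightness of $\{\bar Q^n_{\pi^n}(0)\}$. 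Extract a further subsequence along which the stationary fluid-scaled trajectory over a server cycle converges weakly to a (possibly random) limit $\bar q^\infty$. By the FWLLN of Section \ref{secFluidModel} applied to admissible controls, $\bar q^\infty$ is a.s.\ a solution of the HDS, and it is stationary by construction; hence its long-run average holding cost equals $\E[\bar\Psi^\infty]/\E[T^\infty]$, and this is $\ge c_*$ since $c_*$ is the optimal value of the FCP over all fluid controls (Section \ref{Sec:FluidControl}). Finally, Fatou's lemma for weak convergence gives $\liminf_n\E_{\nu^n}[\bar\Psi^n_{\pi^n}]\ge\E[\bar\Psi^\infty]$, while $\E_{\nu^n}[\bar T^n]$ (a constant of the form $Ls/(1-\rho)$) converges to $\E[T^\infty]$; therefore $\liminf_n\ell^n\ge\E[\bar\Psi^\infty]/\E[T^\infty]\ge c_*$.

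I expect the main obstacle to be the two interchange-of-limits steps that convert fluid optimality into statements about the stationary stochastic systems. For $\pi^n_*$ this is precisely what underlies $\bar\Psi^n_{\pi^n_*}\Rightarrow\Psi_*$: one needs both the global-attractor property of the fluid model under the optimal control \emph{and} the UI hypothesis, the latter supplying tightness of the stationary cost and hence of the stationary queue content. For a general admissible $\pi^n$ there is no attractor to lean on; one must instead identify the weak limit abstractly as a \emph{stationary} HDS solution and appeal to the fluid optimality of $c_*$, and since that limit may be genuinely random, the stationary-ergodic-theorem and Fatou steps have to be carried out with care — as does ruling out degeneracies when the number of table cycles of $\pi^n$ is unbounded along the sequence, and confirming tightness of the stationary distributions when $\psi$ grows slowly.
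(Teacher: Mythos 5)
Your achievability step (the equality in \eqref{eq:eqMain}) follows essentially the paper's route: renewal-reward to write $\lim_t\bar C^n_{\pi^n_*}(t)=\E_{\alpha^n}[\bar\Psi^n_{\pi^n_*}]/\E_{\alpha^n}[\bar T^n_{L_*}]$, then an interchange-of-limits argument to get $\bar\Psi^n_{\pi^n_*}\Rightarrow\int_0^{\tau_*}\psi(q_*)du$, and the assumed UI to upgrade to convergence of means. (One small misattribution: tightness of $\{\bar Q^n_{\pi^n_*}(0)\}$ is not supplied by the UI hypothesis on the costs; in the paper it comes from the exact mean identity $\E[\bar Q^n(\bar A^n_1)]=q_e(a_1)$, i.e.\ Lemma \ref{LEM:ASYMPQ2}(i), which feeds Lemma \ref{lem:interchange}.)

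The lower bound is where a genuine gap appears. You extract a weak limit $\bar q^\infty$ of the stationary trajectories, note it is a stationary HDS solution, compute its long-run cost as $\E[\bar\Psi^\infty]/\E[T^\infty]$, and then assert ``this is $\ge c_*$ since $c_*$ is the optimal value of the FCP over all fluid controls.'' That last step is not justified. The value $c_*$ in \eqref{FCP} is the infimum of the cost over the restricted class $\Phi$ of fluid controls that admit unique solutions and for which every trajectory converges to a limit cycle; equivalently, by Lemma \ref{lem:PRC}, it is the infimum of the per-cycle average cost over PE-candidates. A stationary fluid limit arising from an arbitrary admissible stochastic sequence need \emph{not} be a PE, need not converge to any limit cycle, and can be genuinely random — the paper highlights exactly this in its remark ``On the set $\Phi$.'' So there is no logical bridge from ``stationary HDS solution'' to ``cost $\ge c_*$'' without additional work. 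The entire content of the paper's Theorem \ref{thm:AsympLB} proof is that bridge: using stationarity to produce first-return times $\bar R_r$ to arbitrarily small balls $\sB(r)$ (Lemma \ref{lem:1}), and then showing (Lemma \ref{lem:NearlyPeriodic}) that the trajectory over one return excursion is $O(r)$-close to a bona-fide PE-candidate $q^\omega$, whose per-cycle cost $c^\omega$ is $\ge c_*$ by definition; letting $r\downarrow 0$ closes the argument. You flag this as an ``obstacle'' at the end, but the sketch itself does not supply the needed construction, it simply asserts the conclusion. Relatedly, passing to the limit $\E_{\nu^n}[\bar T^n]\to\E[T^\infty]$ presupposes that the number of table cycles $L_n$ used by $\pi^n$ stabilizes along the subsequence, which is another piece the near-periodicity/renewal argument handles implicitly and your sketch leaves open.
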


\begin{proof}
The result follows from Theorems \ref{thm:AsympLB} and \ref{thm:AsympOpt} in Section \ref{secAsympOpt}.
\end{proof}

The next corollary is a simple consequence of Theorem \ref{thmMain}.

\begin{corollary} \label{coro:AsyOpt}
If $\{\bar \Psi^n_{\pi_*^n} : n \geq 1\}$ is UI, then $\boldsymbol{\pi}_*$ satisfies \eqref{asyOpt}, i.e., it is asymptotically optimal.
\end{corollary}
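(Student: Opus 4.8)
The plan is to derive Corollary \ref{coro:AsyOpt} directly from Theorem \ref{thmMain} by taking expectations and passing the $w.p.1$ statement in \eqref{eq:eqMain} to a statement about the expected long-run average costs in \eqref{asyOpt}. First I would establish the ``$\le$'' side of \eqref{asyOpt} for $\boldsymbol \pi_*$: by Theorem \ref{thmMain}, $\bar C^n_{\pi^n_*}(t) \to c_*$ $w.p.1$ as $t \to \infty$ for each fixed $n$, and since the control is fixed along the sequence, $\lim_{n\tinf}\lim_{t\tinf} \bar C^n_{\pi^n_*}(t) = c_*$ $w.p.1$. To convert this into a statement about $\limsup_{n}\limsup_{t} \E[\bar C^n_{\pi^n_*}(t)]$, I would invoke the renewal-reward theorem applied to the embedded (regenerative) DTMC $\tilde Q_{\pi^n_*}$: since $\pi^n_*$ is the binomial-exhaustive policy, the embedded DTMC is ergodic (positive recurrent) by \cite{fricker1994monotonicity}, so $\bar T^n < \infty$ $w.p.1$ and the long-run average cost equals $\E[\bar\Psi^n_{\pi^n_*}]/\E[\bar T^n]$. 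The UI hypothesis on $\{\bar\Psi^n_{\pi^n_*}\}$ is precisely what upgrades the almost-sure convergence of the time averages to convergence of their expectations, giving $\limsup_{t\tinf}\E[\bar C^n_{\pi^n_*}(t)] = \E[\bar\Psi^n_{\pi^n_*}]/\E[\bar T^n]$, and then another application of UI (now in $n$) yields $\limsup_{n\tinf}\E[\bar\Psi^n_{\pi^n_*}]/\E[\bar T^n] = c_*$.

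Next I would handle the ``$\ge$'' side, i.e., the lower bound for an arbitrary sequence of admissible controls $\boldsymbol\pi$. Here the key observation is that the $w.p.1$ inequality $\liminf_{n\tinf}\liminf_{t\tinf}\bar C^n_{\pi^n}(t) \ge c_*$ from \eqref{eq:eqMain} combined with Fatou's lemma (applied to the nonnegative random variables $\bar C^n_{\pi^n}(t)$, which are nonnegative because $\psi \ge 0$) gives
\begin{equation*}
\liminf_{n\tinf}\liminf_{t\tinf}\E[\bar C^n_{\pi^n}(t)] \ge \E\!\left[\liminf_{n\tinf}\liminf_{t\tinf}\bar C^n_{\pi^n}(t)\right] \ge \E[c_*] = c_*.
\end{equation*}
No uniform integrability is needed on the $\boldsymbol\pi$ side, since for a lower bound Fatou suffices. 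Chaining the two sides together yields
\begin{equation*}
\limsup_{n\tinf}\limsup_{t\tinf}\E[\bar C^n_{\pi^n_*}(t)] = c_* \le \liminf_{n\tinf}\liminf_{t\tinf}\E[\bar C^n_{\pi^n}(t)],
\end{equation*}
which is exactly \eqref{asyOpt} with $\tilde\pi^n_* = \pi^n_*$, so $\boldsymbol\pi_*$ is asymptotically optimal.

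The step I expect to require the most care is justifying the exchange of the $t \to \infty$ limit with the expectation on the $\boldsymbol\pi_*$ side, i.e., that $\limsup_{t\tinf}\E[\bar C^n_{\pi^n_*}(t)]$ actually equals the renewal-reward ratio $\E[\bar\Psi^n_{\pi^n_*}]/\E[\bar T^n]$ rather than merely being bounded above by it. The subtlety is that $\bar C^n_{\pi^n_*}(t)$ is a time average over $[0,t]$ that straddles a random number of server cycles, so one must control the ``edge effects'' (the cost accumulated in the incomplete cycle containing time $t$) and also handle a possibly non-stationary initial condition; the standard remedy is to sandwich the partial time-average between averages over $\lfloor \cdot \rfloor$ and $\lceil \cdot \rceil$ complete cycles and use the UI of the per-cycle cost together with positive recurrence of the embedded DTMC to control the remainder, but it needs to be done with the expectation in place, which is where the UI hypothesis does its work. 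Everything else—Fatou on the lower-bound side, and the final chaining of inequalities—is routine.
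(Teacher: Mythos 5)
Your lower-bound step is essentially identical to the paper's: the paper also applies Fatou (explicitly ``twice,'' once for each iterated $\liminf$) to the nonnegative random variables $\bar C^n_{\pi^n}(t)$ together with \eqref{eq:eqMain}.

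For the upper-bound ($\boldsymbol\pi_*$) side you take a genuinely different route. The paper leans entirely on the w.p.1 conclusion of Theorem \ref{thmMain}: for each fixed $n$, $\lim_{t\tinf}\bar C^n_{\pi^n_*}(t)$ is a deterministic constant, the exchange $\lim_{t\tinf}\E[\bar C^n_{\pi^n_*}(t)] = \E[\lim_{t\tinf}\bar C^n_{\pi^n_*}(t)]$ is justified by dominated convergence (with $\sup_{t\ge 0}\bar C^n_{\pi^n_*}(t)$ as the dominating variable), and the outer $n\tinf$ limit is then just a limit of constants which equals $c_*$ by \eqref{eq:eqMain} — no separate integrability argument is needed at that stage. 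You instead work with the expectation from the start via the $L^1$ renewal--reward theorem, which amounts to re-deriving part of Theorem \ref{thm:AsympOpt} in expectation rather than citing its almost-sure conclusion. Both are viable; the paper's version is more economical because it reuses the w.p.1 statement it just proved.

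One point in your justification should be corrected: you attribute the inner exchange of $\lim_{t\tinf}$ with $\E$ to ``the UI hypothesis on $\{\bar\Psi^n_{\pi^n_*}\}$.'' That UI is uniformity in $n$; it has no bearing on the $t\tinf$ limit for a \emph{fixed} $n$. What that step actually needs is the $L^1$ renewal--reward theorem for the (positive-recurrent, regenerative) cycle structure, which requires only $\E[\bar\Psi^n_{\pi^n_*}] < \infty$ — an implication of UI, but strictly weaker. UI does its genuine work exactly once, in the outer $n\tinf$ limit: $\bar\Psi^n_{\pi_*^n}\Ra \int_0^{\tau_*}\psi(q_*(u))\,du$ by Lemma \ref{lem:interchange} and the continuous mapping theorem, and UI upgrades this weak convergence to convergence of means, which together with $\E[\bar T^n] = \tau_*$ for all $n$ (by \eqref{eq:ExpeCycle}) yields $c_*$. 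So your phrase ``another application of UI (now in $n$)'' misstates the structure — both invocations you describe are indexed by $n$, and only the second one actually uses uniformity.
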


\begin{proof}
It follows from \eqref{eq:eqMain} by applying Fatou's Lemma twice that
\begin{equation*}
\liminf_{n\tinf} \, \liminf_{t \arr \infty} \, \E\off{ \bar C^n_{\pi^n}(t) } \ge \E\off{ \lim_{n\tinf} \lim_{t \arr \infty} \bar C^n_{\pi^n_*}(t) } = c_* .
\end{equation*}
Moreover, for each $n \geq 1$, the embedded DTMC $\tilde Q_{\pi_*^n}$ is ergodic by \citep[Proposition 1]{fricker1994monotonicity},
so that $\bar C^n_{\pi^n_*}(t)$ converges to a deterministic finite value w.p.1 as $t \arr \infty$ (see  \eqref{eq:11} in Section \ref{secProofThm4}
for a characterization of this constant).
Since $\bar C^n_{\pi^n_*}(t)$ is continuous in $t$, it holds that $\E\off{\sup_{t\geq0} \bar C^n_{\pi^n_*}(t)}< \infty$.
Thus,
\begin{equation*}
\lim_{n\tinf} \lim_{t \arr \infty} \E\off{  \bar C^n_{\pi^n_*}(t) } =\lim_{n\tinf} \E\off{ \lim_{t \arr \infty} \bar C^n_{\pi^n_*}(t) } = c_* ,
\end{equation*}
where the first equality follows from the dominated convergence theorem, and the second equality follows from \eqref{eq:eqMain}
and the fact that $\lim\limits_{t \arr \infty} \bar C^n_{\pi^n_*}(t)$ is a constant w.p.1.
\end{proof}

To apply Theorem \ref{thmMain} (and Corollary \ref{coro:AsyOpt}) we must (i) compute the fluid-optimal control parameters $(L_*, \mathbf r_*)$, which are also the
parameters of the binomial-exhaustive policy $\pi^n_*$ for all $n \ge 1$, and (ii) establish that the UI condition holds.
We now discuss these two conditions, starting with the latter.

\paragraph*{The UI Condition in Theorem \ref{thmMain}}
Theorem \ref{th:MomentCheck} below provides sufficient conditions for $\{\bar \Psi^n_{\pi_*^n} : n \geq 1\}$ to be UI, whenever the next assumption holds.

\begin{assumption} \label{assum2}
The following two conditions hold for all $i \in \mathcal I$.

(i) $\E\off{e^{t V_i^n}} < \infty$ for all $t \geq 0$ and $n \geq 1$.

(ii) $\E\off{(\bar V_i^n)^\ell} \arr s_i^\ell$ as $n \arr \infty$ for all $\ell \geq 2$.
\end{assumption}

Recall that $S_k$ denotes a generic random variable having the service time distribution of the class-$k$ customers, $k \in \mathcal K$.

\begin{theorem} \label{th:MomentCheck}
For $p \ge 1$, let $\psi(x) = O(\|x\|^p)$.
Under Assumption \ref{assum2}, $\{\bar \Psi^n_{\pi^n_*} : n \ge 1\}$ is UI if either of the following two conditions holds for all $k \in \mathcal K$.
\begin{enumerate}[(i)]
\item $p > 1$, and for some $\ep > 0$, $\E\off{e^{t S_k}} < \infty$ for all $t \in (-\epsilon, \epsilon)$.
\item $p=1$, and $\E\off{S_k^2} < \infty$.
\end{enumerate}
\end{theorem}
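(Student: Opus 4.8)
The plan is to establish the uniform integrability of $\{\bar\Psi^n_{\pi^n_*} : n\ge 1\}$ by producing an integrable dominating bound, or at least an $L^q$ bound for some $q>1$, that is uniform in $n$. First I would decompose the stationary cumulative cost over a server cycle by the stages of the augmented table: since $\psi$ is nondecreasing and $\psi(x)=O(\|x\|^p)$, there is a constant $c$ with $\psi(\bar Q^n(u))\le c(1+\|\bar Q^n(u)\|^p)\le c'(1 + \sum_k (\bar Q^n_k(u))^p)$, and over the $\ell$-th visit window the queue-length processes are monotone between polling and departure epochs (increasing at rate $\lambda_k$ for queues not in service, and, for the served queue under binomial-exhaustive, bounded above by its value at the polling epoch plus arrivals during the busy periods). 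Hence it suffices to bound $\bar T^n$ and the fluid-scaled queue lengths $\bar Q^n_k(A^{(m)}_i)$ at polling epochs, in stationarity, uniformly in $n$ — more precisely to show that $(\bar T^n)^{p+1}$ and $\bar T^n \cdot (\bar Q^n_k(\text{polling}))^p$ have uniformly bounded expectations (for part (i), with a small extra exponent to upgrade to genuine UI; for part (ii) with $p=1$ one needs uniform boundedness of $\E[(\bar T^n)^2]$ and $\E[\bar T^n \bar Q^n_k]$).

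The core of the argument is a recursion for the queue lengths at successive polling epochs. Under binomial-exhaustive, the number served at stage $i$ when the queue is $N$ is $\sum_{j=1}^{Y_i(N,r_i)} (1+\text{arrivals during }j\text{-th busy period})$, i.e. the queue is thinned geometrically-in-expectation by the factor $(1-r_i)$ after accounting for arrivals; the busy-time at stage $i$ is a random sum of $M/G/1$ busy periods. I would write the vector of polling-epoch queue lengths within one server cycle as an affine-in-noise recursion $\tilde Q^{(m+1)} = \Phi(\tilde Q^{(m)}, \text{service and switchover noise})$ whose "contraction" comes from the fact that each queue is served (thinned) at least once per table cycle since $\mathbf r_*\in\mathcal R$, combined with $\rho<1$ which controls the arrival inflation during busy times. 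This should yield, for a suitable norm, $\E[\|\tilde Q^{(m+1)}\|^q \mid \tilde Q^{(m)}] \le \gamma \|\tilde Q^{(m)}\|^q + (\text{noise terms})$ with $\gamma<1$, provided the relevant $q$-th moments of $S_k$ and of $V^n_i$ exist uniformly in $n$. Here Assumption \ref{assum2} supplies the uniform control on switchover moments (m.g.f. finiteness plus convergence of all moments to $s_i^\ell$), while condition (i) of the theorem gives m.g.f. finiteness of $S_k$ near $0$ — enough for all polynomial moments of $M/G/1$ busy periods — and condition (ii) gives the second moment of $S_k$, which is exactly the threshold for the first moment of an $M/G/1$ busy period to have finite variance, matching $p=1$. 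A geometric-drift / Lyapunov argument (e.g. Meyn–Tweedie-type, or direct iteration of the recursion) then bounds the stationary moments $\E[\|\tilde Q^n_{\pi^n_*}\|^{q}]$ uniformly in $n$, and Wald-type identities convert these into uniform bounds on $\E[(\bar T^n)^{q}]$ and the mixed moments $\E[\bar T^n \|\bar Q^n\|^p]$ appearing in the cost decomposition.

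I would then assemble the pieces: combine the stage-wise cost bound with the uniform moment bounds to get $\sup_n \E[(\bar\Psi^n_{\pi^n_*})^{1+\delta}] < \infty$ for some $\delta>0$ in case (i) (polynomial $\psi$ plus m.g.f. service times leave room to spare an exponent), which implies UI; in case (ii) with linear $\psi$ and only a second service moment, the budget is tight, so instead I would show $\{\bar\Psi^n_{\pi^n_*}\}$ is dominated by an integrable random variable constructed from the busy-period and switchover variables — using that a linear cost integrated over a cycle is, up to constants, $\bar T^n$ times an average queue length, and each is controlled by sums of i.i.d.\ finite-variance busy periods — and invoke the standard criterion that an $L^1$-bounded sequence which is dominated (or for which $\E[\bar\Psi^n \mathbbm 1\{\bar\Psi^n > M\}]\to 0$ uniformly) is UI.

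The main obstacle I anticipate is getting the geometric-drift inequality to hold \emph{uniformly in $n$}: the switchover times grow like $n$, so the per-cycle "noise" in the recursion for $\tilde Q^n$ is $\Theta(n)$, and one must track the $n$-dependence carefully — it is precisely the fluid-scaled recursion for $\bar Q^n = \tilde Q^n / n$ that should have $n$-uniform drift, with Assumption \ref{assum2}(ii) ensuring the rescaled switchover moments do not blow up and Assumption \ref{assum2}(i) ensuring no heavy-tail contamination for any fixed $n$. A secondary technical point is handling the random sum $Y^{(m)}_i(N,r_i)$ of busy periods (a binomially-many sum of i.i.d.\ busy periods whose count is itself proportional to the current queue): one needs the right conditional moment bounds, e.g. via the m.g.f. of the $M/G/1$ busy period in case (i) and Wald's second-moment identity in case (ii), to close the recursion at the correct exponent.
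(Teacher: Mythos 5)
Your decomposition of $\bar\Psi^n_{\pi^n_*}$ stage by stage, bounding the integrand by $\bar Q^n_k(\bar A^n_i)$ at polling epochs plus arrivals over the (dominating, exhaustive) busy time plus switchover, is the same starting point as the paper. Where you diverge is in how you control stationary moments of $\bar Q^n_k(\bar A^n_i)$: you propose to establish an $n$-uniform geometric-drift/Lyapunov inequality for the embedded chain and iterate it. The paper avoids this entirely. It instead cites companion results (the two lemmas LEM:ASYMPQ2 and LEM:ASYMPB) which give the stationary moments of $\bar Q^n_k(\bar A^n_i)$ and $\bar B^n_i$ in \emph{closed form} (the first moment is exactly the fluid value for every $n$, because $\E[V^n_i]=ns_i$, and higher moments converge to powers of the fluid value). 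That turns the whole UI question into a sequence of applications of a single clean criterion: for nonnegative random variables, $X^n\Rightarrow X$ together with $\E[X^n]\to\E[X]<\infty$ implies $\{X^n\}$ is UI (Billingsley, Theorem 5.4). Your drift route could in principle be made to work in case (i), but the $n$-uniformity of the drift coefficient (which you correctly flag as the hard point) would need to be proved, not just asserted, and you would have to show the geometric contraction survives the $\Theta(n)$ switchover noise after rescaling.

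The more serious gap is in your plan for case (ii). You correctly observe that with $p=1$ and only $\E[S_k^2]<\infty$ the ``budget is tight'' — you cannot get $\sup_n\E[(\bar\Psi^n)^{1+\delta}]<\infty$ — and you propose domination by an integrable random variable instead. But there is no obvious $n$-uniform dominating variable: the quantities $\bar Q^n_k(\bar A^n_i)$, $\tilde B^n_i$, $\bar V^n_i$ all change with $n$, and the natural bounds are products of $n$-dependent random variables, not a fixed envelope. What actually closes case (ii) is precisely the weak-convergence-plus-mean-convergence criterion you do not invoke: the paper shows each cross term such as $\bar Q^n_k(\bar A^n_i)\tilde B^n_i$, $\bar Q^n_k(\bar A^n_i)\bar V^n_i$, $\bar{\mathcal P}^n_k(\tilde B^n_i+\bar V^n_i)(\tilde B^n_i+\bar V^n_i)$ converges weakly to a \emph{constant}, computes the limit of its expectation exactly (via Wald, conditioning, and independence, using only the stated second-moment hypotheses), observes the limits match, and concludes UI termwise. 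Your domination strategy would need to be replaced by this argument, or by a direct verification that $\E[\bar\Psi^n\mathbf 1\{\bar\Psi^n>M\}]$ vanishes uniformly in $n$ — which, without the mean-convergence criterion, would require tail estimates that the second-moment hypothesis alone does not readily supply.
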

\begin{proof}
See Section \ref{ap:MomentCheck}.
\end{proof}
\noindent As an immediate corollary to Theorems \ref{thmMain} and \ref{th:MomentCheck}, we obtain that $\boldsymbol \pi_*$ is asymptotically optimal under either one
of the assertions in Theorem \ref{th:MomentCheck}.

We remark that the condition that the second moments of the service times are finite when $p=1$ is also necessary in order for the desired UI to hold; see Theorem 3 in \cite{Hu20Momentarxiv}.
Thus, for cost functions that grow at most at a linear rate,
$\boldsymbol \pi_*$ is asymptotically optimal if and only if $\E\off{S_k^2} < \infty$ for all $k \in \mathcal K$.

\paragraph*{The Optimal Control Parameters}
Solving the FCP in order to compute the optimal control parameters $(L_*, \mathbf r_*)$ is not always feasible,
because it requires optimizing over the table structure (within the infinite set of all possible augmented tables) simultaneously
with optimizing the parameters. Nevertheless, in addition to solving the FCP on a case-by-case basis, it can also be solved in certain general settings.
The most important case for which we can solve the FCP is that of cyclic basic tables, when the cost function $\psi$ is separable convex,
namely, is of the form $\psi(x) = \sum_{k=1}^K \psi_k(x_k)$, $x = (x_1, \dots, x_K)$, 
where $\psi_k : \RR_+ \ra \RR_+$ is convex for each $k \in \mathcal K$. 
In this setting, we prove that the exhaustive policy, under which the server empties the queues in all visits, is fluid optimal;
see Proposition \ref{LEM:LINEARCYCLIC}. See also Corollary \ref{Cor:1} for the corresponding asymptotic-optimality result.

We also consider a restricted optimal-control problem for cases in which the FCP cannot be solved.
In the restricted problem, we optimize the control parameters for a finite set of values of $L$ (including the case $L=1$).
Unlike the FCP, the {\em restricted FCP} (RFCP) can always be solved, and the corresponding binomial-exhaustive policy is then asymptotically optimal
under the same conditions in Theorem \ref{th:MomentCheck}, although among a smaller family of admissible controls;
see Definition \ref{def:LCyclic} in Section \ref{secPiL} and Theorem \ref{th:AsympOptRelaxed} in Section \ref{secAsyOptRestrict}.

\subsection{Roadmap to the Proof of Theorem \ref{thmMain}} \label{Sec:Roadmap}

We now describe the main steps in the proof of Theorem \ref{thmMain}.
We emphasize that the description here is provided for overview, and is not meant to be fully rigorous. In addition,
the proof scheme outlined below is for the general case, in which we optimize over all possible augmented tables.
The aforementioned restricted problem follows a similar procedure, except that the
family of admissible controls is smaller for that latter problem.

\paragraph*{(I) {\bf Formalizing an FCP (Sections \ref{secFluidModel} and \ref{sec:computePE})}}
To formalize an FCP corresponding to the control problem for the stochastic system, we consider a fluid model of the original stochastic system.
Specifically, we consider a deterministic polling system that has the same basic table as the stochastic system,
in which the arrival and service processes are replaced by deterministic continuous processes with the same rates $\lm_k$ and $\mu_k$, $k \in \mathcal K$.
In that deterministic counterpart, the queue process $Q$ is replaced by a {\em fluid model} $q := \{q(t) : t \ge 0\}$,
whose dynamics are determined by its initial condition and the switching policy.

We then seek a fluid control that minimizes the long-run average cost.
Let $q_\phi(t)$ denote the value of the fluid queue at time $t$ under control $\phi$, for $\phi$ in some appropriate
family of admissible fluid controls.
To apply the asymptotic-optimization framework, we want $q_\phi$ to be related to $Q_\pi$ via a FWLLN.
Next, 
as will be shown in the proof of Theorem \ref{thm:AsympLB},
all possible fluid limits for $Q_\pi$ in stationarity are ``almost periodic'' in the sense that each such limit is arbitrarily close to a
{\em periodic equilibrium} (PE).\footnote{We use the acronym PE for both singular and plural forms, i.e., periodic equilibrium and periodic equilibria.}
(A fluid model $q_e$ is a PE if $q_e(t + \tau) = q_e(t)$ for all $t \ge 0$, for some $\tau > 0$; see Definition \ref{def:PE}.)
Thus, optimizing the long-run average cost is equivalent to first optimizing over all possible PE,
and then finding a control that guarantees convergence of the fluid model to the optimal PE.
In particular, we can take the set of admissible fluid controls, denoted by $\Phi$, to be the set of all controls
under which the fluid model converges to a PE, so that the FCP reduces to
\bes
\inf_{\phi \in \Phi} \, \lim_{t\tinf} {1 \over t} \int_0^t \psi(q_\phi(u)) du = {1 \over \tau_*} \int_{0}^{\tau_*} \psi(q_*(u)) du,
\ees
where $q_*$ is the optimal PE and $\tau_*$ is its period.

\paragraph*{(II) {\bf Solving the FCP (Section \ref{Sec:PRC})}}
From the description of step (I), solving the FCP consists of two components: first, we need to identify an optimal PE $q_*$,
and second, we need to design a control $\phi_* \in \Phi$ such that $q_{\phi_*}$ converges in an appropriate sense (see Definition \ref{defConvPE}) to $q_*$.
These two components are interconnected, because we have quite some flexibility in how we characterize the optimal PE.
In particular, the orbit of any PE $q_e$ is a loop (a closed curve)
in $\RR^K_+$, which 
is fully characterized by specifying the server's departure epochs during that server cycle, together with a single point on the PE, because the
dynamics of the fluid model are deterministic between switching epochs.
A fluid control $\phi$ is then a switching rule that produces the desired trajectory $q_e$ whenever the initial point is on that PE's trajectory,
and is in $\Phi$ if it guarantees the desired convergence. (The main difficulty in establishing that a control $\phi_*$ is optimal
is in establishing that it is an element of $\Phi$.)

The fluid control $\phi_*$ we propose prescribes reducing queue $p(i)$ by a fixed proportion $r_i$ of its size at the polling epoch.
Specifically, letting the value of the fluid queue polled at stage $i$ be $q_{p(i)}(a_i^{(m)})$ at the polling epoch,
the server will switch away from that queue when its value reaches $(1-r_i) q_{p(i)}(a_i^{(m)})$, $i \in \mathcal{I}^L$, $m \geq 1$.
We refer to this control as stage-based proportion reduction (SB-PR), and to the SB-PR control with the optimal parameters $(L_*, \mathbf{r}_*)$
as the {\em optimal SB-PR}.

\paragraph*{(III) {\bf Proving asymptotic optimality (Sections \ref{secControlsForTheStochasticSystem} and \ref{secAsympOpt})}}
The translation step of the optimal SB-PR to the binomial-exhaustive policy is straightforward, and was discussed above and further detailed in Section \ref{secControlsForTheStochasticSystem}.
To prove that the binomial-exhaustive policy with parameters $(L_*, \mathbf r_*)$ is asymptotically optimal, we first show
(in Theorem \ref{thm:AsympLB}) that the limiting holding cost of any sequence of admissible controls is lower bounded by the optimal fluid cost.
We then prove that under the conditions in Theorem \ref{th:MomentCheck},
the binomial-exhaustive policy with the optimal SB-PR parameters achieves the lower bound asymptotically; see Theorem \ref{thm:AsympOpt}.

\section{The Fluid Model} \label{secFluidModel}
To formulate the FCP, we start by constructing a fluid model for the polling system.
To this end, we consider a deterministic polling system having the exact same system's topology and basic table as the stochastic system,
but in which arrivals and service completions occur continuously and deterministically at rates $\lambda_k$ and $\mu_k$, $k \in \mathcal{K}$, respectively.
Let $q(t)$ denote the fluid content at time $t$, and for $i \in \mathcal{I}^L$ and $m \geq 1$,
let $a^{(m)}_{i}$, $d^{(m)}_{i}$ and $b^{(m)}_{i}$ denote the polling epoch, departure epoch, and the busy time of stage $i$ during the $m$th server cycle.
Let $u^{(m-1)}$ be the time at which the $m$th server cycle begins, and $\tau^{(m)}$ be the length of the $m$th server cycle.
For the following, we write $q$ instead of $q_\phi$ to simplify the notations whenever the control is fixed, and refer to $q$ as the ``queue'' or ``fluid content'' interchangeably.

Let $\mathbf k= (i\in \mathcal{I}^L: p(i)=k)$ denote the vector of ordered stages at which queue $k$ is visited in a server cycle, so that
queue $k$ is visited a total of $\mbox{dim}(\mathbf  k)$ times over a server cycle.
Then the fluid queue over the first server cycle satisfies
\begin{equation*}
q_k(t) = q_k(0) + \lm_k t - \mu_k \sum_{j=1}^{dim(\mathbf  k)} \int_{0}^t \mathbf 1_{\left[a_{k_j}^{(1)}, \, d_{ k_j}^{(1)} \right)}(s) ds,
\quad k \in \mathcal{K}, \,\, t \in [u^{(0)}, u^{(1)}).
\end{equation*}
Since the fluid model is time-invariant, it can be described inductively via its dynamics over one server cycle;
in particular, the dynamics of $q_k$ over the time interval $[u^{(m-1)}, u^{(m)})$, namely, during the $m$th server cycle, can be described by
\begin{equation} \label{fluid implicit}
q_k(t) = q_k(u^{(m-1)}) +  \lm_k(t-u^{(m-1)}) - \mu_k \sum_{j=1}^{dim(\mathbf  k)} \int_{u^{(m-1)}}^t \mathbf  1_{\left[a_{ k_j}^{(m)}, \, d_{ k_j}^{(m)} \right)}(s) ds,
\end{equation}
for $t \in [u^{(m-1)}, u^{(m)})$, $k \in \mathcal{K}$, $m \geq 1$.

\subsection{The Fluid Model as a Hybrid Dynamical System} \label{secHDS}
Note that the values of $b^{(m)}_{i}$, $a^{(m)}_{i}$ and $d^{(m)}_{i}$ are determined by the state of $q$ and the control,
and are therefore not available a-priori (those values must be computed on the fly).
It is therefore more useful to represent $q$ as a solution to a differential equation.
To achieve such a representation, let $z(t)$ denote the location of the server at time $t$:
we write $z(t)=i$ if the server is actively serving queue $p(i)$ at time $t$, and $z(t)=\ominus_i$ if
the server is switching from stage $i$ to stage $i + 1$ at time $t$ (with $i + 1 := 1$ for $i = IL$). We let
\bequ \label{Zspace}
\mathcal{Z}:=\{1,\dots,IL,\ominus_1, \dots, \ominus_{IL}\}
\eeq
denote the state space of the server-location process $z$.

If a control depends only on the state of the queue process $q$ and the location of the server,
then we should keep track of the state of the process $(q,z)$ in order to determine the values of the switching times.
However, since $q$ is a ``surrogate'' for the stochastic process $Q$, and since we consider controls under which $\{\tilde Q(m) : m \ge 0\}$
in \eqref{Q-DTMC} is a DTMC, we also allow the control to depend on the value of the fluid queue at the last polling epoch prior to $t$, i.e., on $q(a(t))$,
where
\bes
a(t):=\max \, \{a_i^{(m)} \leq t : i\in \mathcal{I}^L, m \ge 1\}.
\ees
Thus, we consider the process
\bequ \label{x}
x(t) := (q(t), q(a(t)), z(t)), \quad t \ge 0,
\eeq
taking values in $\RR_+^K \times \RR_+^K \times \mathcal{Z}$.
Note that $x$ in \eqref{x} is a hybrid of the fluid-content process $q$, which has a continuous state space, and the server-location process $z$,
which has a finite state space,
and is therefore an HDS. (In fact, $x$ is a slight generalization of standard HDS due to the additional processes $q(a(t))$.)
Then $x$ is a solution to the following state equations.
\begin{equation} \label{HDSgeneral}
\begin{split}
\dot q(t) & = f(z(t)) \\
z(t) & = g(q(t), q(a(t-)), z(t-)), \\
a(t) &= h(q(t), q(a(t-)), z(t-)),
\end{split}
\end{equation}
where $f:  \mathcal Z \arr \R^K$,\, $g: \RR_+^K \times \RR_+^K \times \mathcal Z \arr \mathcal Z$, and $h: \RR_+^K \times \RR_+^K \times \mathcal Z \arr \R^K_+$
are the functions specified below.

First, the function $f$ determines the dynamics of the queues, which change at the polling and departure epochs of each stage.
Thus, for each $k \in \mathcal K$,
$f$ is defined via
\begin{equation*} \label{eq:f}
f_k(q(t), z(t)) =
\begin{cases}
\lambda_k -  \mu_k  \quad &\text{if } z(t) = i \,\, \text{and} \,\, p(i) = k \\
\lambda_k \quad &\text{otherwise}.
\end{cases}
\end{equation*}
The functions $g$ and $h$ are determined by the control; to characterize these function, we define
a {\em service function} $\phi_i : \R^K_+ \arr \R_+$ mapping the queue length at the polling epoch of stage $i$ to the immediate busy time of the server;
\begin{equation} \label{phi}
\phi_i ( q ( a_i^{(m)}  ) ) := b_{i}^{(m)}, \quad i \in \mathcal{I}^L, \,\, m \ge 1.
\end{equation}
The non-idling property we impose implies that
\begin{equation*}
b_{i}^{(m)} \leq q (a_i^{(m)})/(\mu_{p(i)} - \lambda_{p(i)}), \quad i \in \mathcal{I}^L, \,\, m \geq 1.
\end{equation*}
Indeed, the expression on the right-hand side of the above inequality is the time at which the fluid queue that is attended by the server hits state $0$
if the server keeps processing work continuously.

Now, the function $g$ characterizing the location of the server as follows: \\
(i) If $z(t-) = i$ and $q_{p(i)}(t) = q_{p(i)}(a(t-)) - (\mu_{p(i)}-\lambda_{p(i)}) \phi_i \of{q(a(t-))}$, define
\begin{equation*}
\begin{split}
j_s &:= \min \{j \geq i : s_{(j \text{ mod } IL)} > 0 \} \\
j_\phi &:= \min \{j > i : \phi_{(j \text{ mod } IL)} \of{q(a(t-))} > 0 \} .
\end{split}
\end{equation*}

(a) If $j_s < j_\phi$, then $g(q(t), q(a(t-)), z(t-)) = \ominus_{(j_s \text{ mod } IL)}$.

(b) Otherwise, $g(q(t), q(a(t-)), z(t-)) = (j_\phi \text{ mod } IL)$.

\noindent (ii) If $z(t-) = \ominus_i$ and $q_{p(i+1)}(t) = q_{p(i+1)}(a(t-)) + \lambda_{p(i+1)} \of{ \phi_i \of{q(a(t-))} + s_i }$, define
\begin{equation*}
\begin{split}
j_s &:= \min \{j > i : s_{(j \text{ mod } IL)} > 0 \} \\
j_\phi &:= \min \{j > i : \phi_{(j \text{ mod } IL)} \of{q(a(t-))} > 0 \} .
\end{split}
\end{equation*}

(a) If $j_s < j_\phi$, then $g(q(t), q(a(t-)), z(t-)) = \ominus_{(j_s \text{ mod } IL)}$.

(b) Otherwise, $g(q(t), q(a(t-)), z(t-)) = (j_\phi \text{ mod } IL)$.

\noindent (iii) Otherwise, $g(q(t), q(a(t-)), z(t-)) = z(t-)$.

Lastly, the function $h$ updates the most recent polling epoch according to
\begin{equation*}  \label{eq:h}
\begin{split}
&h(q(t), q(a(t-)), z(t-)) = \\
&
\begin{cases}
t \quad &\text{if } z(t-) = \ominus_i \text{ and }q_{p(i+1)}(t) = q_{p(i+1)}(a(t-)) + \lambda_{p(i+1)} \of{ \phi_i \of{q(a(t-))} + s_i }\\
a(t-) \quad &\text{otherwise} .
\end{cases}
\end{split}
\end{equation*}

\subsection{Qualitative Behavior of the HDS}
Our qualitative analysis of the HDS relies on fundamental concepts defined in this section.

\begin{definition} [PE] \label{def:PE}
A solution $x_e$ to the HDS \eqref{HDSgeneral} is a PE if there exists $\tau > 0$ such that $x_e(t + \tau) = x_e(t)$ for all $t \geq 0$.
The smallest such $\tau$ is called the period.
\end{definition}
Note that a solution $x_e$ is a PE if and only if the {\em orbit} of $q_e$, namely, the image of $q_e$ in $\RR_+^K$, is a loop.
Thus, we will henceforth refer to the queue component $q_e$ as a PE.

\begin{definition} [$L$-cycle PE] \label{def:LCyclePE}
A solution $x_e$ to the HDS \eqref{HDSgeneral} is an $L$-cycle PE if $x_e(t + \tau_L) = x_e(t)$ for all $t \geq 0$,
where $\tau_L$ is its cycle length spanning $L$ table cycles.
\end{definition}
Clearly, the cycle length $\tau_L$ of a PE is an integer product of the period of that PE.
It follows from basic flow-balance equations that the cycle length of $L$-cycle PE satisfies
\begin{equation} \label{eq:57}
\tau_L = s L / (1-\rho).
\end{equation}
To see this, observe that the server must be working a fraction $\rho$ of the time, and is therefore switching between stages for a fraction $1-\rho$ of the time.
Since the total switchover time over $L$ table cycles is $s L$, it holds that $\tau_L (1-\rho) = s L$, from which \eqref{eq:57} follows.

\paragraph*{Stable PE}
The purpose of the fluid-optimal control is to steer every possible trajectory $q$ to a desired PE $q_*$.
It is significant that convergence of trajectories to a PE cannot occur in the Lyapunov sense,
i.e., it does not hold that $\|q(t)-q_e(t)\| \ra 0$ as $t \tinf$ for a trajectory $q$ that converges to the PE $q_e$.
Instead, convergence of $q$ to the PE $q_e$ is said to hold if the orbit of $q$ in $\RR_+^K$ ``spirals'' towards the closed orbit of $q_e$.
Recall that, without loss of generality, $u^{(0)} = 0$, namely, the beginning of the first server cycle is time $0$.
Similarly, we take $u_e^{(0)}=0$ for a PE $x_e$.

\begin{definition} [convergence to a PE] \label{defConvPE}
A solution $x$ to the HDS \eqref{HDSgeneral} is said to converge to a PE $x_e$ if\,
$|| q(u^{(m)} + \cdot) - q_e (\cdot) ||_t \ra 0$
as $m \tinf$, for all $t > 0$.
\end{definition}

A PE $q_e$ may be of several types; if any other trajectory in some neighborhood of $q_e$
converges to it, then $q_e$ is called a {\em stable limit cycle}. (It is unstable if the trajectories in its neighborhood are ``spirling'' away from it,
and semi-stable if some trajectories in its neighborhood converge, while other are repelled.)
For our optimality result, we require a stronger stability property to hold.
\begin{definition} [global limit cycle]
A PE $q_e$ of the HDS is said to be a global limit cycle if all the trajectories of the HDS converge to $q_e$.
\end{definition}

In ending we remark that determining the number of limit cycles of a dynamical system is in general a hard problem,
even in the classical setting of dynamical systems with continuous vector fields.
(For planar systems with a polynomial vector field of degree greater than $1$, this is part of Hilbert's 16th open problem, which is still unsolved.)
Further, HDS of the form \eqref{fluid implicit} can exhibit chaotic behavior, and in particular, possess infinitely many PE, {\em none of which is a limit cycle},
even when the continuous-state process $q$ is of a dimension as low as $3$; see \cite{chase1993periodicity}.
In contrast, the fluid model (and limit) under our proposed SB-PR control will be shown to possess a global limit cycle (which is necessarily unique).

\subsection{Fluid Limits and Their Relation to the Fluid Model} \label{sec:FluidLimit}

Whereas the fluid model is derived for deterministic polling systems, the {\em fluid limits},
namely, the subsequential limits of the sequence of fluid-scaled queue processes, may not be deterministic under an arbitrary sequence of controls.
A FWLLN holds, and the resulting fluid limit is deterministic, under an extra regularity condition; see Proposition \ref{thm:FWLLN} below.
Since the deterministic fluid model is the basis for solving the FCP and deriving the asymptotically optimal control, it is significant
that the FWLLN holds for the binomial-exhaustive policy.

Consider the stochastic polling system, and let $Z(t)$ denote the location of the server at time $t$, defined on the same state space $\mathcal{Z}$ in \eqref{Zspace};
that is, $Z$ is the stochastic counterpart of the server-location process $z$ in the fluid model.
For $t \ge 0$, let
\begin{equation} \label{eq: A(t)}
A(t) := \max\{A_i^{(m)} \leq t : i \in \mathcal{I}^L, \, m \geq 1 \}.
\end{equation}
We define the state-process (of the stochastic system)
\begin{equation*}
X(t) := ( Q(t), Q(A(t)), Z(t) ), \quad t \geq 0,
\end{equation*}
where we removed $\pi$ from the notation to simply it.

Let $\mathcal P_k := \{\mathcal P_k(t) : t \ge 0\}$ denote the Poisson arrival process to buffer $k$, and
let $\mathcal S_k := \{\mathcal S_k(t) : t \ge 0\}$ denote the {\em potential} service process in buffer $k$, namely, $\mathcal S_k(t)$ would be
the number of class-$k$ service completions by time $t$ if the server were to process work from queue $k$ continuously during $[0,t)$.
In particular,
\bes
\mathcal S_k(t) := \sup \left\{m \geq 1 : \sum_{j=1}^m S_k^{(j)} \le t\right\},
\ees
where $\{S_k^{(j)} : j \ge 1\}$ is a sequence of i.i.d.\ random variables distributed like $S_k$.
Then for $k \in \mathcal K$, 
\begin{equation} \label{eq:samplepath1}
Q_k(t) = Q_k(0) + {\mathcal P}_k(t) - {\mathcal S}_k \of{ \sum_{m =1}^\infty \sum_{\ell=1}^{dim(\mathbf k)}
\int_{0}^{t} \mathbf  1_{ \left[ A_{ k_\ell}^{(m)},  D_{ k_\ell}^{(m)} \right) } (u) du }, \quad t \geq 0 .
\end{equation}

Now, consider the sequence of stochastic systems under the large-switchover-time scaling.
For the $n$th system, let $A_{i}^{(m), n}$ and $D_{i}^{(m), n}$ denote, respectively, the polling and departure epoch of stage $i$ in the $m$th server cycle,
$i \in \mathcal{I}^L$, $m \geq 1$.
The corresponding fluid-scaled server-switching epochs (arrival and departure epochs to and from the queues) are given by $\bar A_{i}^{(m), n} := A_{i}^{(m), n}/n$ and $\bar D_{i}^{(m), n} := D_{i}^{(m), n}/n$.
Analogously to \eqref{eq: A(t)}, we denote the most recent polling epoch prior to time $t$ in system $n$ via
\begin{equation*}
A^n(t) := \max\{A_i^{(m), n} \leq nt : i \in \mathcal{I}^L, \, m \geq 1 \}  , \quad t \geq 0 .
\end{equation*}
The fluid-scaled state-process is given by
\begin{equation*} \label{Xn}
\bar X^n(t) := ( \bar Q^n(t), \bar Q^n(\bar A^n(t)), Z(nt) ), \quad t \geq 0,
\end{equation*}
where $\bar Q^n(t) := Q(nt)/n$ and $\bar A^n(t) := A^n(t)/n$ (there is no spacial scaling of the process $Z(nt)$).

For $k \in \mathcal{K}$ and $n \geq 1$, define $\mathcal S^n_k(t) := \mathcal S(n t)$, $\mathcal P^n_k(t) := \mathcal P_k(nt)$,
$\bar{\mathcal S}_k^n(t) := \mathcal S_k^n(nt)/n$ and $\bar{\mathcal P}^n_k(t) := \mathcal P(nt)/n$.
Then the representation \eqref{eq:samplepath1} for the queue in the $n$th system becomes
\begin{equation} \label{eq:samplepath}
Q_k^{n}(t) = Q_k^{n}(0) + {\mathcal P}_k^{n}(t) - {\mathcal S}_k^{n} \of{ \sum_{m =1}^\infty \sum_{\ell=1}^{dim(\mathbf k)}
\int_{0}^{t} \mathbf  1_{ \left[ \bar A_{ k_\ell}^{(m), n}, \bar D_{ k_\ell}^{(m), n} \right) } (u) du }, \quad t \geq 0 .
\end{equation}

\begin{lemma} [tightness] \label{lem:Tightness}
If $\{\bar Q^n(0) : n \geq 1\}$ is tight in $\R_+^K$, then $\{\bar{Q}^n : n \geq 1\}$ is $C$-tight in $D^K$,
and the sample paths of its subsequential limits are of the form \eqref{fluid implicit}.
\end{lemma}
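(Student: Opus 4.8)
The plan is to establish $C$-tightness of $\{\bar Q^n : n \ge 1\}$ via the standard two-step criterion (see, e.g., \cite{whitt2002stochastic}): stochastic boundedness of the initial values together with an oscillation (modulus-of-continuity) bound that vanishes in the limit. First I would rewrite the fluid-scaled sample-path representation \eqref{eq:samplepath} in the centered form
\begin{equation*}
\bar Q_k^n(t) = \bar Q_k^n(0) + \lambda_k t - \mu_k \,\bar\Gamma_k^n(t) + \bar{\mathcal E}_k^n(t),
\end{equation*}
where $\bar\Gamma_k^n(t) := \sum_{m,\ell}\int_0^t \mathbf 1_{[\bar A_{k_\ell}^{(m),n}, \bar D_{k_\ell}^{(m),n})}(u)\,du$ is the (continuous, $1$-Lipschitz) fluid-scaled cumulative busy time devoted to queue $k$, and $\bar{\mathcal E}_k^n$ collects the two centered error terms $\bar{\mathcal P}_k^n(t) - \lambda_k t$ and $\mu_k \bar\Gamma_k^n(t) - \bar{\mathcal S}_k^n(\bar\Gamma_k^n(t))$. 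By the FWLLN for Poisson processes and for renewal (potential service) processes, together with the random-time-change theorem and the fact that $\bar\Gamma_k^n$ is $1$-Lipschitz (hence the composition is controlled on compacts), $\bar{\mathcal E}_k^n \Rightarrow 0$ in $D$ uniformly on compact intervals; in particular $\{\bar{\mathcal E}_k^n\}$ is $C$-tight with limit $0$.

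Next I would bound the oscillation of $\bar Q^n$ directly. The term $\bar Q_k^n(0)$ is tight by hypothesis, and $t \mapsto \lambda_k t - \mu_k \bar\Gamma_k^n(t)$ is Lipschitz with constant $\max(\lambda_k, \mu_k - \lambda_k) \le \mu_k$ uniformly in $n$, so its modulus of continuity over any interval of length $\delta$ is at most $\mu_k\delta$, deterministically. Combined with the vanishing oscillation of $\bar{\mathcal E}_k^n$, this gives, for every $T>0$,
\begin{equation*}
\lim_{\delta \downarrow 0}\limsup_{n\to\infty} \Prob{ w(\bar Q^n, \delta, T) > \eta } = 0 \quad \text{for all } \eta > 0,
\end{equation*}
where $w$ is the uniform modulus of continuity. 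Since $\bar Q^n$ has at most $O_p(1)$ jumps of fluid size $O_p(1/n) \to 0$ (each jump corresponds to a single arrival or service completion), the jump sizes vanish, so the $C$-tightness criterion for $D^K$ is met, and every subsequential limit lies in $C^K$ w.p.1.

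Finally, to identify the form of the limits: on any subsequence along which $\bar Q^n \Rightarrow q$ (and $Z(n\cdot)$, being finite-state, also converges along a further subsequence to some piecewise-constant $z$), invoke Skorokhod representation to work with a.s. convergence. The Lipschitz bound forces the limiting busy-time process $\bar\Gamma_k^n \to \gamma_k$ with $\gamma_k$ absolutely continuous and $\dot\gamma_k(t) \in \{0,1\}$ a.e.; moreover $\dot\gamma_k(t) = 1$ exactly when the limiting server location $z(t)$ equals some stage $i$ with $p(i) = k$, because during switchover intervals and visits to other queues no service is given to queue $k$, while during a visit to queue $k$ the non-idling property forces full-rate service. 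Passing to the limit in the integral equation then yields
\begin{equation*}
q_k(t) = q_k(0) + \lambda_k t - \mu_k \sum_{j=1}^{\dim(\mathbf k)} \int_0^t \mathbf 1_{[a_{k_j}^{(m)}, d_{k_j}^{(m)})}(s)\,ds,
\end{equation*}
which is precisely \eqref{fluid implicit} over each server cycle, with the $a_{k_j}^{(m)}, d_{k_j}^{(m)}$ being the (a.s. convergent) limits of the scaled switching epochs $\bar A_{k_j}^{(m),n}, \bar D_{k_j}^{(m),n}$ — these converge because each switchover time is $\Theta_p(n)$ by Assumption \ref{assum1} and each busy time is a sum of $O_p(n)$ i.i.d. $M/G/1$ busy periods with finite mean, so both are $\Theta_p(n)$ and their fluid scalings are tight; extracting a further subsequence makes them converge.

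The main obstacle I anticipate is the last identification step rather than tightness itself: one must argue carefully that the scaled departure epochs $\bar D_i^{(m),n}$ actually converge (not merely are tight) and that in the limit the server does not "chatter" — i.e., that the limiting switching epochs are well separated so that $z$ is genuinely piecewise constant with finitely many switches per compact interval. This requires a lower bound on the limiting inter-switching times, which one gets from $s>0$ (every table cycle contains at least one strictly positive switchover $s_i$) together with the flow-balance identity $\tau_L = sL/(1-\rho)$; but making this rigorous for an arbitrary admissible control, without yet knowing the control is the SB-PR policy, is the delicate part. (For the lemma as stated, which only asserts tightness and the \emph{form} \eqref{fluid implicit} of the limits along subsequences — not uniqueness or convergence of a specific control — it suffices to pass to a sub-subsequence on which all the relevant scaled epochs converge, so the chattering issue can be deferred to later sections.)
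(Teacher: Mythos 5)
Your argument is correct, but you take a genuinely different route from the paper. The paper's proof is structural and very short: it observes that, by Assumption~\ref{assum1} together with $s>0$, each table cycle contains at least one switchover of order $\Theta_p(n)$, so the number of server switchings on $[0,nT)$ is finite w.p.1 as $n\to\infty$; it then extracts a sub-subsequence along which all the fluid-scaled switching epochs converge, notes that between any two consecutive such epochs the indicator in the time-changed service process in \eqref{eq:samplepath} is identically $0$ or $1$, and concludes by the continuous-mapping theorem (\cite[Theorem 13.6.4]{whitt2002stochastic}) that $\bar Q^n_k$ converges along that sub-subsequence to a process of the form \eqref{fluid implicit}. $C$-tightness is never proved directly; it follows because every subsequence has a further convergent sub-subsequence whose limit is continuous. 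Your argument, by contrast, centers the primitives, uses the FWLLN together with the random-time-change theorem to kill the error $\bar{\mathcal E}^n_k$, and gets an explicit, uniform-in-$n$ Lipschitz bound on the remaining drift, which feeds the modulus-of-continuity criterion for $C$-tightness directly. This is a little more work, but it is more robust — it gives $C$-tightness before any discussion of the limiting switch epochs — and it cleanly separates tightness from identification of the limit.

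Two small imprecisions in your identification step deserve flagging. First, you write that each busy time is ``a sum of $O_p(n)$ i.i.d.\ $M/G/1$ busy periods,'' but this structure is specific to the binomial-exhaustive policy; the lemma is about an arbitrary admissible control, for which the busy time need not decompose this way. No such structure is needed here: the scaled epochs $\bar A^{(m),n}_i$, $\bar D^{(m),n}_i$ that matter all lie in $[0,T)$ by construction, so they are trivially tight, and the only thing one must rule out is that there are infinitely many of them. Second, ``each switchover time is $\Theta_p(n)$'' overstates Assumption~\ref{assum1}, which explicitly allows $s_i=0$ for some $i$; only $s=\sum_i s_i>0$ is guaranteed. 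The correct (and sufficient) observation — which the paper makes in a single sentence — is that each table cycle contains at least one switchover with $s_i>0$, hence of fluid length $\Theta_p(1)$, so the number of switching epochs on $[0,T)$ is finite w.p.1 in the limit; one then extracts a sub-subsequence on which they all converge. Once that is in hand, your passage to the limit in the integral equation does give \eqref{fluid implicit}. In other words, the chattering issue you raise is not in fact deferrable to later sections, but the fix is exactly the one-line argument above.
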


It is significant that, for $i \in \mathcal I$ and $m \ge 1$, the time epochs $u^{(m)}$, $a_{i}^{(m)}$ and $d_{i}^{(m)}$ of a subsequential limit of $\bar Q^n$
may be random variables, in which case that limit $q$ is stochastic.
However, Lemma \ref{lem:Tightness} states that, even in this case,
the evolution of a stochastic limit $q$ between any two consecutive server-switching epochs is deterministic,
and is characterized in \eqref{fluid implicit}.

\begin{proof}[Proof of Lemma \ref{lem:Tightness}]
Fix $T > 0$. Due to the scaling of the switchover times in Assumption \ref{assum1},
the number of server switchings in system $n$ over the time interval $[0, nT)$
is finite w.p.1 as $n\tinf$. Hence, the sequence of fluid-scaled server-switching epochs is tight in $[0, T)$.
In particular, any subsequence of the sequences $\{\bar A_{ k_\ell}^{(m), n} : n \ge 1\}$ and $\{\bar D_{ k_\ell}^{(m), n} : n \ge 1\}$
in \eqref{eq:samplepath} has a further converging sub-subsequence (for all $m$ and $ k_\ell$ for which there are infinitely many
elements of these sequences in $[0, T)$).
Now, the indicator functions in the time-changed service process in \eqref{eq:samplepath} are fixed at the value $0$ or at $1$
between any two consecutive server-switching epochs,
so that $\bar Q^n_k$ is a continuous mapping of its primitives between any two such switching epochs.
It follows from \cite[Theorem 13.6.4]{whitt2002stochastic} that
any subsequence of $\{\bar Q^n_k : n \ge 1\}$ for which all the server-switching epochs in $[0, T)$ converge, converges in $D^K$
to $q_k$ in \eqref{fluid implicit} as $n\tinf$.
\end{proof}

It follows immediately from the proof of Lemma \ref{lem:Tightness} that if the sequences of fluid-scaled server-switching epochs converge in $[0, T)$
for all $T > 0$, then $\bar Q^n_k \Ra q_k$ in $D^K$ as $n\tinf$, for $q_k$ in \eqref{fluid implicit}.
In fact, since the dynamics of the queues are deterministic between any two server-switching epochs, convergence of the server departure times
implies that the server arrival times also converge.
We therefore have the following FWLLN.

\begin{proposition} [FWLLN] \label{thm:FWLLN}
Assume that $\bar Q^n(0) \Ra q(0)$ in $\RR^K_+$ as $n\tinf$.
If $\bar D^{(m),n}_i \Ra d^{(m)}_i$ in $\RR_+^K$ for all $m \ge 1$ and $i \in \mathcal I^L$, then
$ \bar Q^n \Rightarrow q $ in $D^K$ as $n\tinf$, where each element $q_k$, $k \in \mathcal{K}$, of the vector process $q$ satisfies \eqref{fluid implicit}.
\end{proposition}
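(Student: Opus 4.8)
The plan is to bootstrap from Lemma \ref{lem:Tightness} by upgrading subsequential convergence to full convergence, using the hypothesis that the departure epochs themselves converge. First I would invoke Lemma \ref{lem:Tightness}: since $\{\bar Q^n(0):n\ge1\}$ is tight (being weakly convergent to $q(0)$), the sequence $\{\bar Q^n:n\ge1\}$ is $C$-tight in $D^K$, and every subsequential limit has sample paths of the form \eqref{fluid implicit}, characterized by the initial condition together with the collection of server-switching epochs $\{u^{(m)}, a_i^{(m)}, d_i^{(m)}\}$. So it suffices to show that \emph{every} weakly convergent subsequence has the \emph{same} limit $q$, namely the one determined by $q(0)$ and the epochs $\{d_i^{(m)}\}$.

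The key step is that the full vector of switching epochs of any subsequential limit is pinned down. Fix $T>0$ and pass to a subsequence along which $\bar Q^n \Ra \tilde q$ in $D^K$; by $C$-tightness and a diagonal argument we may also assume (along a further subsubsequence) that all fluid-scaled switching epochs in $[0,T)$ converge. By the hypothesis, $\bar D_i^{(m),n}\Ra d_i^{(m)}$, so the limiting departure epochs of $\tilde q$ are exactly the $d_i^{(m)}$. Next, observe that between consecutive switching epochs the dynamics in \eqref{fluid implicit} are deterministic linear drifts with known rates ($\lambda_k$ during idle/switch phases, $\lambda_k-\mu_k$ during service), so the limiting polling epoch $a_i^{(m)}$ is determined by the preceding departure epoch $d_{i-1}^{(m)}$ plus the (deterministic, by Assumption \ref{assum1}) switchover time $s_{i-1}$ — here I would use that $\bar V_i^n\Ra s_i$, a constant, so the switchover durations pass to the limit with no randomness. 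Likewise $u^{(m)}$ is determined. Hence all switching epochs of $\tilde q$ coincide with the prescribed ones, and since $q(0)=\lim \bar Q^n(0)$ is fixed, \eqref{fluid implicit} forces $\tilde q = q$, the same deterministic function for every convergent subsequence. Together with $C$-tightness this yields $\bar Q^n\Ra q$ in $D^K$.

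I would fill in one technical point: to apply the continuous-mapping argument of \cite[Theorem 13.6.4]{whitt2002stochastic} as in the proof of Lemma \ref{lem:Tightness}, one needs the FWLLN for the primitives, $\bar{\mathcal P}_k^n\Ra \lambda_k\,e$ and $\bar{\mathcal S}_k^n\Ra \mu_k\,e$ (where $e(t)=t$), which is the classical functional strong law for renewal/Poisson processes, plus joint convergence with $\bar Q^n(0)$ and the switchover times; since the limits are deterministic, joint convergence follows from marginal convergence. The one genuine subtlety — the main obstacle — is handling the boundary/degenerate cases already flagged after Assumption \ref{assum1} and in the definition of $g$: stages with $s_i=0$ (null switchover times, where $\bar V_i^n=o_p(n)$) and stages where the service function vanishes, $\phi_i(q(a_i^{(m)}))=0$, so that polling and departure epochs of several stages collapse to a single time point in the limit. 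One must check that the indicator functions in \eqref{eq:samplepath} still take constant values $0$ or $1$ on the nondegenerate interstitial intervals and that the finitely many coincident epochs do not create a set of positive Lebesgue measure where the drift is ambiguous; because the number of switchings over $[0,nT)$ is $O_p(1)$ and each collapsed cluster contributes zero length, the time-changed service process remains a continuous functional of its primitives off a null set, and the argument goes through. I expect the verification that these degenerate clusters are handled consistently with the definitions of $g$ and $h$ in \eqref{HDSgeneral} to be the most delicate bookkeeping, but it is bookkeeping rather than a conceptual difficulty.
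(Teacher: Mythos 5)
Your proposal is correct and takes essentially the same approach as the paper: the paper also derives this proposition as an immediate consequence of the proof of Lemma \ref{lem:Tightness}, using the $C$-tightness of $\{\bar Q^n\}$, the hypothesis that the departure epochs converge, and the observation that the dynamics (including the switchover durations $\bar V_i^n \Ra s_i$) are deterministic between switching epochs, which forces the arrival epochs and hence all switching epochs of every subsequential limit to coincide with the prescribed ones, pinning down the unique limit $q$. The extra bookkeeping you flag (degenerate clusters with $s_i=0$ or $\phi_i=0$, and joint convergence of the primitives) is left implicit in the paper as well, so there is no divergence in the argument.
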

Note that if $q(0)$ and $d^{(m)}_i$ are deterministic for all $m \ge 1$ and $i \in \mathcal I^L$, then the fluid limit $q$
is the unique solution to an HDS of the form \eqref{fluid implicit}.

\section{The Fluid Control Problems} \label{Sec:FluidControl}
In this section we formally define the FCP, whose solution is an optimal fluid control for the family of all augmented tables,
and the restricted problem, namely the RFCP, whose solution is an optimal fluid control for a finite set of augmented tables.

\paragraph*{The FCP}
For the FCP, we consider the set $\Phi$ of controls for which the following holds for each control $\phi \in \Phi$:

\noindent(i) There exists a unique solution $q^\gamma_\phi := \{q^\gamma_\phi(t) : t \ge 0\}$ to the HDS \eqref{HDSgeneral}
under $\phi$ for any initial condition $\gamma \in \RR^K_+$.

\noindent(ii) Any solution $q^\gamma_\phi$ converges to a limit cycle as $t\tinf$.

For $\gamma \in \RR_+^K$, let
\bes
C_\phi(\gamma) := \inf_{\phi \in \Phi} \, \lim_{t\tinf}{1 \over t} \int_{0}^{t} \psi(q_\phi^\gamma(u))du.
\ees
\begin{definition}[fluid optimal control]
We say that $\phi_*$ is fluid-optimal if $C_{\phi_*}(\gamma) \le C_{\phi}(\gamma)$ for all $\phi \in \Phi$ uniformly in $\gamma$.
\end{definition}
The following lemma, whose proof appears in Section \ref{secProofsThm&Lem}, motivates searching for an ``optimal PE'',
namely, a PE that achieves the lowest possible time-average cost over its cycle length among all possible PE,
and then devising a control ensuring that that PE is a global limit cycle.
\begin{lemma} \label{lem:LongRunCost}
For $\phi \in \Phi$ and $\gamma \in \RR_+^K$, let $q^\gamma_\phi$ denote the unique solution to the HDS when control $\phi$ is exercised and when $q^\gamma_\phi(0) = \gamma$.
Let $q_e^\gamma$ denote the limit cycle to which $q^\gamma_\phi$ converges, and $\tau_e^\gamma$ denote its cycle length. Then
\begin{equation*}
\lim_{t\tinf} {1 \over t} \int_{0}^{t} \psi(q^\gamma_\phi(u)) du = {1 \over \tau_e^\gamma}\int_{0}^{\tau_e^\gamma} \psi(q_e^\gamma(u)) du.
\end{equation*}
\end{lemma}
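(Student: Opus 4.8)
The plan is to exploit the fact that the long-run time average of a bounded-variation periodic-type integrand is governed by the limit cycle, with the ``transient'' portion of the trajectory contributing nothing in the limit. First I would fix $\phi \in \Phi$ and $\gamma \in \RR_+^K$, write $q := q^\gamma_\phi$, $q_e := q_e^\gamma$, $\tau_e := \tau_e^\gamma$, and recall from the definition of $\Phi$ (property (ii)) and Definition \ref{defConvPE} that $\| q(u^{(m)} + \cdot) - q_e(\cdot) \|_t \ra 0$ as $m \tinf$, for every $t > 0$, where $u^{(m)}$ is the start of the $m$th server cycle of $q$. The key structural observation is that, along the HDS trajectory, each server cycle has length bounded below by $\min_i s_i$-type quantities and, more importantly, that the server-cycle lengths $\tau^{(m)} = u^{(m)} - u^{(m-1)}$ of $q$ converge to $\tau_e$ as $m \tinf$ (this follows from the convergence of the trajectory together with the flow-balance identity \eqref{eq:57}, since $\tau_L$ is pinned down by $\rho$ and $s$; in fact if the limit cycle is $L$-cycle then eventually all cycles of $q$ have the same ``shape'' in the limit). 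Consequently $u^{(m)} \ra \infty$ and $u^{(m)}/m \ra \tau_e$.

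Next I would split the integral $\int_0^t \psi(q(u))\,du$ at the server-cycle epochs. Given $t$, let $m(t) := \max\{m : u^{(m)} \le t\}$, so that
\begin{equation*}
\int_0^t \psi(q(u))\,du = \int_0^{u^{(1)}} \psi(q(u))\,du + \sum_{m=1}^{m(t)-1} \int_{u^{(m)}}^{u^{(m+1)}} \psi(q(u))\,du + \int_{u^{(m(t))}}^{t} \psi(q(u))\,du.
\end{equation*}
On the $m$th middle block, change variables to write $\int_{u^{(m)}}^{u^{(m+1)}} \psi(q(u))\,du = \int_0^{\tau^{(m)}} \psi(q(u^{(m)}+v))\,dv$. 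Since $\psi$ is continuous (hence uniformly continuous on compacts) and $q$ is bounded on $[0,\infty)$ — boundedness follows because $q$ converges to the bounded orbit $q_e$, so all these integrands are uniformly bounded by some constant $M$ — the convergence $\|q(u^{(m)}+\cdot) - q_e(\cdot)\|_{\tau_e + 1} \ra 0$ together with $\tau^{(m)} \ra \tau_e$ gives $\int_0^{\tau^{(m)}} \psi(q(u^{(m)}+v))\,dv \ra \int_0^{\tau_e} \psi(q_e(v))\,dv$ as $m \tinf$. By the Cesàro lemma, the average of the first $m(t)-1$ of these blocks converges to $\int_0^{\tau_e}\psi(q_e(v))\,dv$. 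Dividing by $t$, and using that the first block is a fixed finite quantity (so $O(1/t) \ra 0$), that the last partial block is bounded by $M \tau^{(m(t))}$ which is $O(1)$ (so again $O(1/t) \ra 0$ after division by $t$), and that $t / (m(t)-1) \ra \tau_e$, we get
\begin{equation*}
\lim_{t\tinf} \frac{1}{t}\int_0^t \psi(q(u))\,du = \frac{1}{\tau_e} \int_0^{\tau_e} \psi(q_e(v))\,dv,
\end{equation*}
which is the claim.

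The main obstacle I anticipate is justifying cleanly that $\tau^{(m)} \ra \tau_e$ and, relatedly, that the convergence in Definition \ref{defConvPE} is uniform enough over a window slightly larger than $\tau_e$ to control the blocks even when $\tau^{(m)}$ has not yet exactly equaled $\tau_e$; one has to be a little careful that the window $[0,\tau^{(m)}]$ on which we integrate is eventually contained in a fixed compact window (e.g. $[0, \tau_e + 1]$) on which uniform convergence holds, which is fine once $\tau^{(m)} \ra \tau_e$. A secondary technical point is establishing the uniform boundedness of $q$ on $[0,\infty)$: this should follow from the fact that $q$ converges to the compact orbit of $q_e$, so $q$ is bounded on $[u^{(1)},\infty)$, while on $[0, u^{(1)}]$ it is bounded because $q$ is Lipschitz (the rates $f$ are bounded) on a finite interval. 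Everything else is routine: continuity/uniform continuity of $\psi$, change of variables, and the Cesàro lemma.
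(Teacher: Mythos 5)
Your proof is correct and uses essentially the same decomposition as the paper: split the integral at cyclic epochs, use $\|q(u^{(m)}+\cdot)-q_e(\cdot)\|_t \ra 0$ together with boundedness of the orbit and uniform continuity of $\psi$ on compacts to make the per-block costs and block lengths converge, and finish with a Ces\`aro-type argument (the paper instead carries out an explicit two-sided $\ep$-bound, and aligns its blocks with the period of $q_e$ via epochs $v^{(m)}$ rather than the server-cycle epochs $u^{(m)}$, but these are cosmetic differences). One small inaccuracy worth flagging: the flow-balance identity \eqref{eq:57} pins down the cycle length of a PE, but does not by itself yield $\tau^{(m)}\ra\tau_e^\gamma$ for a transient trajectory; that convergence should be read off directly from the trajectory convergence in Definition \ref{defConvPE}, which is exactly how the paper treats it.
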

Due to Lemma \ref{lem:LongRunCost}, the FCP is concerned with finding a control $\phi_*$ that achieves the optimal long-run average $c_*$, where
\bequ \label{FCP}
\bsplit
c_* := \inf_{\phi \in \Phi}C_\phi(\gamma) := \inf_{\phi \in \Phi} \, \lim_{t\tinf}{1 \over t} \int_{0}^{t} \psi(q_\phi^\gamma(u))du, \qforallq \gamma \in \RR_+^K.
\end{split}
\eeq
In turn, to solve the FCP, we seek a control $\phi_* \in \Phi$ under which there exists
a {\em global limit cycle} $q_*$, such that
\begin{equation} \label{optPE}
{1 \over \tau_*} \int_{0}^{\tau_*} \psi(q_*(u)) du \le {1 \over \tau_e} \int_{0}^{\tau_e} \psi(q_e(u)) du
\end{equation}
holds for any other PE $q_e$ (whose cycle length is $\tau_e$).
Note that both $\tau_*$ and $\tau_e$ in \eqref{optPE} are allowed to have any possible value of $\tau_L$ in \eqref{eq:57},
so that we are effectively optimizing the PE over all possible augmented tables.

\paragraph*{Solving the FCP}
We start by identifying closed curves in $\RR^K_+$ which are possible solution to the HDS (namely, they can be obtained as a PE
under some control). We refer to each such closed curve $q_e$ as a {\em PE-candidate}, and treat it as a mapping from $[0, \tau_e]$ to $\R^K_+$
(where $q_e(0) = q_e(\tau_e)$).
We then optimize over all possible PE-candidates in order to find an {\em optimal PE-candidate} $q_*$ for which \eqref{optPE} holds.
Finally, we design an optimal control $\phi_* \in \Phi$ under which the optimal PE-candidate $q_*$ is a global limit cycle for the HDS,
so that \eqref{FCP} holds for any solution $q^\gamma_\phi$ to \eqref{HDSgeneral} with initial condition $\gamma \in \RR_+^K$.

We emphasize two points: (i) We do not rule out the possibility that, in general, the infimum $c_*$ is not achievable via a PE-candidate, namely,
that there exists no PE-candidate whose time-average cost over the cycle length is $c_*$.
(However, we are unaware of such pathological examples; we do not study this problem due to its impracticability, as explained in the next point.)
(ii) Computing a PE-candidate for which $c_*$ is attained is not always practically feasible, due to the need to optimize the table structure
among all the possible augmented tables. (Hence, proving that a given problem is well-posed may also be impractical.)

As was mentioned in Section \ref{secSumMain},
solving the FCP is possible for specific systems or in specific settings.
The most important case for which the FCP can be solved is when the cost function is separable convex (including linear),
and the basic table is cyclic; see Proposition \ref{LEM:LINEARCYCLIC} and Corollary \ref{Cor:1} for the corresponding asymptotic-optimality result.

\begin{remark}[On the set $\Phi$]{\em
It is significant that the set of fluid limits is larger than the set of possible fluid models under $\Phi$. In particular,
fluid limits under a sequence of admissible controls can be non-stable, in the sense that they do not converge to a limit cycle, and can also be stochastic.
Thus, $\Phi$ is smaller than the set of possible controls for the fluid limits.
However, Theorem \ref{thm:AsympLB} in Section \ref{secAsympOpt} proves that $c_*$ in \eqref{FCP} is a lower bound
on the achievable costs asymptotically (as $n\tinf$), so that, it is sufficient to search for control in $\Phi$.
}\end{remark}

\paragraph*{The RFCP}
When solving the FCP in \eqref{FCP} is not feasible, one can instead optimize among
all $L$-cycle PE for $L$ in some finite subset $\mathcal N \subset \NN$, e.g., $L \in \mathcal N=\{1, \dots, M\}$, where $M \ge 1$ is a finite integer.
To this end, we consider the RFCP, whose goal is to find $c_\mathcal N$, where
\begin{equation}  \label{FCPrelax}
c_\mathcal N := \min_{L \in \mathcal N} \, \inf_{\phi \in \Phi^L}C_\phi(\gamma) :=\min_{L \in \mathcal N} \,  \inf_{\phi \in \Phi^L} \, \lim_{t\tinf}{1 \over t} \int_{0}^{t} \psi(q_\phi^\gamma(u))du, \qforallq \gamma \in \RR_+^K,
\end{equation}
where $\Phi^L \subset \Phi$ is the set of all the controls under which any solution to the HDS \eqref{HDSgeneral} converges to an $L$-cycle limit cycle.
Correspondingly, for each $L \in \mathcal N$ we seek an optimal $L$-cycle PE-candidate $q_*^L$ such that the inequality
\begin{equation*}
{1 \over \tau_L} \int_{0}^{\tau_L} \psi(q^L_*(u)) du \le  {1 \over \tau_L} \int_{0}^{\tau_L} \psi(q^L_e(u)) du,
\end{equation*}
holds for any other $L$-cycle PE $q^L_e$. The solution to the RFCP is then
\bes
q_\mathcal N := \min_{L \in \mathcal N} q^L_*.
\ees
The procedure for solving the RFCP is similar to that of solving the FCP:
We start by computing an optimal PE-candidate for each $L \in \mathcal N$, and take the one with the lowest time-average cost over the cycle length
to be the optimal PE-candidate for the RFCP.
Letting $L_{\mathcal N}$ denote the number of table cycles contained in the cycle length of $q_\mathcal N$,
we then design a control $\phi_{\mathcal N}$ under which $q_\mathcal N$ is a global limit cycle for the HDS.
Unlike the FCP \eqref{FCP}, solving the RFCP is always feasible, because computing
an optimal PE-candidate $q_*^L$ for any fixed $L$, and therefore computing $q_\mathcal N$, is straightforward.

In ending we remark that $L=1$ should always be an element of $\mathcal N$, not only because it corresponds to the basic table, but also
because the period of an $L$-cycle PE can be smaller than $\tau_L$, i.e., the period might be $\tau_{L_2} < \tau_L$, with $L$ being divisible by $L_2$.
In particular, an optimal $L$-cycle PE with $L > 1$ may have period $\tau_1$.

\subsection{Computing an Optimal PE-Candidate} \label{sec:computePE}
We now discuss the first step in solving the FCP and RFCP, namely, characterizing an optimal PE-candidate.

\subsubsection{Optimal PE-Candidates for the FCP} \label{secFCPformal}
Let $\mathcal Q$ denote the set of all PE-candidates (of all possible cycle lengths $\tau_L$, $L \ge 1$).
When a solution to the FCP \eqref{FCP} exists, an optimal PE-candidate for this FCP solves the optimization problem
\begin{equation} \label{eq:FluidOpt}
\begin{split}
\min_{q_e \in \mathcal Q} \quad & \frac{1}{\tau_L} \int_{0}^{\tau_L} \psi(q_e(u)) \, du. \\
\end{split}
\end{equation}

Let $q_{exh}$ denote the one-cycle PE under the exhaustive policy in which the server empties the queue it attends and then switches to the next queue in the table
(the existence of such a PE is established in Lemma \ref{lem:bijective} below).
Recall that $\psi$ is separable convex if $\psi(x) = \sum_{k \in \mathcal{K}} \psi_k(x_k)$ for $x \in \RR_+^K$,
and $\psi_k$ is convex for each $k \in \mathcal{K}$.
\begin{proposition} \label{LEM:LINEARCYCLIC}
If the basic table is cyclic and $\psi$ is separable convex, then $q_{exh}$ is a solution to \eqref{eq:FluidOpt}.
\end{proposition}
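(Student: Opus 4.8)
The plan is to show that for a cyclic table, any $1$-cycle PE-candidate $q_e$ has time-average cost at least that of $q_{exh}$, and then to argue that restricting attention to $1$-cycle PE-candidates is without loss of optimality when the table is cyclic and $\psi$ is separable convex. For the first part, fix a cyclic table, so $\mathcal I = \mathcal K$ and each queue is visited exactly once per cycle. A $1$-cycle PE-candidate is determined (via the flow-balance identity \eqref{eq:57}, which pins the cycle length to $\tau_1 = s/(1-\rho)$ regardless of the control) by the single departure epoch of each queue, or equivalently by the ``reduction fractions'' $r_k \in (0,1]$; the exhaustive policy corresponds to $r_k = 1$ for all $k$. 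The key observation is that, because queue $k$ only decreases while it is being served (during its unique busy interval in the cycle) and increases at rate $\lambda_k$ the rest of the time, the trajectory $q_{e,k}$ is a piecewise-linear ``sawtooth'' that attains its minimum $\underline q_k = (1-r_k)\,q_{e,k}(a_k)$ at the departure epoch $d_k$ and its maximum at the polling epoch $a_k$; and for fixed $\tau_1$, the \emph{amplitude} of this sawtooth is forced — the total rise over the cycle is $\lambda_k(\tau_1 - b_k)$ and the busy time is $b_k = r_k q_{e,k}(a_k)/(\mu_k-\lambda_k)$, which together determine both $q_{e,k}(a_k)$ and $\underline q_k$ as functions of $r_k$. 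Under the exhaustive choice $r_k=1$, the minimum is $0$ and the busy time is as long as possible, so the sawtooth sits as low as possible.

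The core of the argument is then a per-queue comparison: for each $k$, show
\begin{equation*}
\frac{1}{\tau_1}\int_0^{\tau_1}\psi_k(q_{e,k}(u))\,du \;\ge\; \frac{1}{\tau_1}\int_0^{\tau_1}\psi_k(q_{exh,k}(u))\,du,
\end{equation*}
and then sum over $k$ using separability. Two ingredients drive this. First, a \emph{pointwise domination after re-centering}: the exhaustive trajectory, viewed as a sawtooth with minimum $0$ and the maximal busy fraction, is pointwise below any other sawtooth of the same queue with the same $\tau_1$ in the sense that its ordered values (its distribution as $u$ ranges over $[0,\tau_1]$) are stochastically smaller. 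Concretely, the two sawtooths have the same ``up-slope'' $\lambda_k$ but the non-exhaustive one has a shorter, steeper down-slope and a strictly positive floor, so one can couple the time parametrizations so that $q_{exh,k}(u) \le q_{e,k}(u')$ at matched occupation-time quantiles. Second, \emph{convexity of $\psi_k$} upgrades this to the integral inequality: since $\psi_k$ is nondecreasing and convex, the time-average of $\psi_k$ along a sawtooth is monotone in both the floor level and the amplitude, and the exhaustive sawtooth minimizes both. (If one prefers to avoid coupling, the same conclusion follows by writing the time-average cost of a sawtooth explicitly as a one-dimensional integral $\frac{1}{\tau_1}\int \psi_k$ over a linearly-parametrized segment plus a linearly-parametrized segment, differentiating in $r_k$, and checking the derivative is $\le 0$ on $(0,1]$ using $\psi_k' $ nondecreasing.)

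The remaining step is to rule out that some $L$-cycle PE-candidate with $L\ge 2$ beats $q_{exh}$. Here I would use a convexity/averaging argument: an $L$-cycle PE visits queue $k$ some $L$ times, and its restriction to queue $k$ is a concatenation of $L$ sawtooths with a common up-slope $\lambda_k$; the cyclic structure forces the sum of the $L$ busy times plus the interleaved switchover/other-service times to equal $\tau_L$. Replacing the $L$ sawtooths by $L$ copies of the single exhaustive sawtooth (which is feasible as a $1$-cycle PE, hence as an $L$-cycle PE via repetition) does not increase the time-average cost, by applying the $1$-cycle comparison on each of the $L$ sub-cycles after a flow-balance bookkeeping that shows each sub-cycle's queue-$k$ excursion dominates the corresponding exhaustive excursion. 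Separability again lets this be done queue-by-queue. The main obstacle I anticipate is precisely this last reduction — making the per-sub-cycle domination rigorous when an $L$-cycle PE can allocate service to queue $k$ unevenly across its $L$ visits, so that no single visit looks like the balanced exhaustive excursion; the fix is to first symmetrize (use convexity of $\psi_k$ to show the \emph{balanced} $L$-cycle exhaustive PE — which is just $L$ repetitions of $q_{exh}$ — is optimal among $L$-cycle PEs) and only then compare across $L$. I expect the $1$-cycle pointwise/convexity comparison to be routine once the sawtooth geometry is written down; the genuine work is in the $L\ge 2$ reduction and in verifying the existence and well-posedness of $q_{exh}$, which is handed to us by Lemma \ref{lem:bijective}.
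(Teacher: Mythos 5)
Your proposal is correct in outline and takes a genuinely different route from the paper. The paper first approximates $\psi$ by separable piecewise-linear cost functions (from above and below), and for piecewise-linear $\psi_k$ it decomposes the cost into level-set areas $\int q_k \mathbf{1}_{\{q_k \ge \alpha\}}$, derives flow-balance/geometric lower bounds on these areas in each sub-cycle, relaxes to a simple convex program over the ``time above level $\alpha$'' variables, and checks that $q_{exh}$ achieves the balanced solution. Your argument works directly with the convex $\psi_k$ and the sawtooth geometry, bypassing the piecewise-linear approximation; the price is that the $L \ge 2$ reduction has to be carried out by hand via Jensen rather than falling out of a quadratic program. Both use the same per-queue relaxation (drop the coupling between queues that enters through the inter-visit times) and both exploit separability in the same way.

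Two points you should tighten. First, a computational slip in the $L=1$ step: the busy time per cycle is \emph{fixed} at $b_k = \rho_k \tau_1$ independently of $r_k$ (flow balance), not ``as long as possible'' under exhaustion; consequently the amplitude $q_{e,k}(a_k)-\underline q_k = \lambda_k(1-\rho_k)\tau_1$ is also fixed, and $r_k$ only translates the sawtooth vertically by $\underline q_k = (1/r_k - 1)\lambda_k(1-\rho_k)\tau_1$. So for $L=1$ the comparison is a pure vertical shift and needs only monotonicity of $\psi_k$, not convexity — your coupling/majorization machinery is heavier than necessary there (this also explains why the paper's Proposition 5.3 holds for $L=1$ without convexity). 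Second, your first-cut $L\ge 2$ argument (``each sub-cycle's queue-$k$ excursion dominates the corresponding exhaustive excursion'') is false as stated: a sub-cycle with a short inter-visit time has a \emph{smaller} peak than the balanced exhaustive one. You correctly identify the fix (symmetrize via convexity), and it can be made precise as follows: after the per-queue relaxation, set the floors $\underline q_k^{(m)} = 0$ (optimal by monotonicity); then for a piecewise-linear excursion of queue $k$ with peak $P_m$, the up- and down-slopes are $\lambda_k$ and $-(\mu_k-\lambda_k)$, so its cost contribution is $\tfrac{1}{\lambda_k(1-\rho_k)}\int_0^{P_m}\psi_k(y)\,dy =: \tfrac{1}{\lambda_k(1-\rho_k)}F(P_m)$; the only remaining constraint is $\sum_{m=1}^L P_m = \lambda_k(1-\rho_k)\tau_L$, and since $F$ is convex (as $\psi_k$ is nondecreasing), Jensen gives the balanced $P_m = \lambda_k(1-\rho_k)\tau_1$, which is exactly $L$ repetitions of $q_{exh,k}$. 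This is where convexity of $\psi_k$ is genuinely used, and it plays the same role as the quadratic relaxation \eqref{eq:FluidOpt10} in the paper.
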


\begin{proof}
See Appendix \ref{pf:linearcyclic}.
\end{proof}

Whereas $q_{exh}$ is not a solution to \eqref{eq:FluidOpt} in general, as we show below,
it is easy to see that each queue must be emptied at least once in a PE-candidate
that solves \eqref{eq:FluidOpt}.
In particular, for $q_{*,k}$ denoting the $k$th component process of a solution $q_*$ to \eqref{eq:FluidOpt}, and $\tau_*$ denoting the period (or cycle length) of $q_*$,
it must hold that $q_{*,k}(t_k) = 0$, for some $t_k \in [0, \tau_*)$, $k \in \mathcal K$.
To see this, observe that a PE-candidate is completely determined by its initial condition and the busy times $(b_i, i \in \mathcal{I}^L)$.
Consider a PE-candidate $q_e^{(1)}$ in which the $k$th queue, denoted by $q_{e,k}^{(1)}$, is such that $q_{e,k}^{(1)}(t) > 0$ for all $t \in [0, \tau_e)$, where $\tau_e$
is the period of $q_{e,k}^{(1)}$. We can construct a PE-candidate $q_e^{(2)}$ that has lower cost than $q_{e}^{(1)}$ by taking
$$q_{e,k}^{(2)}(0) := q_{e,k}^{(1)}(0) - \min_{t \in [0, \tau_{*})}q_{e,k}^{(1)}(t), \quad q_{e,\ell}^{(2)}(0) := q_{e,\ell}^{(1)}(0) \, \text{ for }  \, \ell \neq k, \ell \in \mathcal K, $$
and giving $q_{e}^{(2)}$ the same busy times $(b_i, i \in \mathcal{I}^L)$ of $q_{e,k}^{(1)}$. Then $q_e^{(2)}(t) < q_e^{(1)}(t)$ for all $t \in [0, \tau_e)$,
and the same inequality holds for the corresponding costs, because $\psi$ is nondecreasing.

\subsubsection{Optimal PE-Candidates for the RFCP} \label{secOptimalPErelax}
Let $\mathcal Q^L$ denote the set of all PE-candidates having cycle length $\tau_L$.
To solve the RFCP in \eqref{FCPrelax}, we solve for the optimal $L$-cycle PE-candidate for each $L \in \mathcal N$, taking the
one that gives the overall minimal cost as the solution. To this end, we consider the following optimization problem.
\begin{equation} \label{eq:FluidOpt2}
\begin{split}
\min_{q_e^L \in \mathcal Q^L} \quad  &\frac{1}{\tau_L} \int_{0}^{\tau_L} \psi(q_e^L(u)) \, du, \quad \text{for some (fixed) } L \ge 1.
\end{split}
\end{equation}
Unlike \eqref{eq:FluidOpt}, problem \eqref{eq:FluidOpt2} always admits solution.

\begin{lemma} \label{lem:ExistenceOfSolution}
For any fixed $L \in \NN$, the optimization problem \eqref{eq:FluidOpt2} admits a solution $q^L_e$.
\end{lemma}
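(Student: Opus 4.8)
The plan is to cast \eqref{eq:FluidOpt2} as the minimization of a continuous function over a compact parameter set and invoke the extreme value theorem. First I would observe that, for fixed $L$, any $L$-cycle PE-candidate $q_e^L \in \mathcal Q^L$ is completely determined by a finite-dimensional parameter: the vector of busy times $\mathbf b := (b_i : i \in \mathcal I^L)$ together with a single point on the orbit, say the queue content $q_e^L(0) \in \RR_+^K$ at the start of the first server cycle. The cycle length $\tau_L = sL/(1-\rho)$ is fixed by \eqref{eq:57}, and between switching epochs the dynamics are the deterministic linear flow described by $f$ in \eqref{HDSgeneral}; hence $q_e^L(\cdot)$ is an explicit (piecewise-linear) function of $(\mathbf b, q_e^L(0))$.

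Next I would identify the constraints that characterize a legitimate PE-candidate and argue they define a compact set. The busy times must be nonnegative and satisfy the non-idling bound $b_i \le q_{p(i)}(a_i)/(\mu_{p(i)} - \lambda_{p(i)})$; the total service time over the cycle must equal $\rho \tau_L$, i.e. $\sum_{i \in \mathcal I^L} b_i = \rho \tau_L$ (this follows from flow balance and pins down the scale); the queue must return to its starting value after one server cycle (the ``loop'' condition $q_e^L(\tau_L) = q_e^L(0)$, which is a system of $K$ linear equations in the parameters); and the trajectory must remain in $\RR_+^K$. As observed just before the lemma, we may additionally impose without loss that each queue is emptied at least once, $\min_{t} q_{e,k}^L(t) = 0$ for every $k$; this, combined with the fixed cycle length and the bounded arrival/service rates, gives a uniform a priori bound on $\|q_e^L\|_{\tau_L}$, hence on $q_e^L(0)$. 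The busy-time vector already lies in the compact simplex-like set $\{\mathbf b \ge 0 : \sum_i b_i = \rho\tau_L\}$. The remaining constraints (loop condition, nonnegativity of the trajectory, non-idling) are closed conditions, so the feasible parameter set, call it $\Theta_L \subset \RR^{IL} \times \RR^K_+$, is compact. It is also nonempty — for instance the exhaustive $L$-cycle PE, whose existence is guaranteed by Lemma \ref{lem:bijective}, furnishes a point in $\Theta_L$.

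Finally I would note that the objective $(\mathbf b, q_e^L(0)) \mapsto \frac{1}{\tau_L}\int_0^{\tau_L} \psi(q_e^L(u))\,du$ is continuous on $\Theta_L$: $q_e^L(u)$ depends continuously (indeed piecewise-linearly, jointly in $u$ and the parameters) on $(\mathbf b, q_e^L(0))$, $\psi$ is continuous by assumption, and integration over the fixed interval $[0,\tau_L]$ preserves continuity. A continuous function on a nonempty compact set attains its minimum, which yields the desired solution $q_e^L$.

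The main obstacle is the bookkeeping in Step 2: one must verify carefully that the map from the finite parameter vector to the full trajectory is well defined and continuous across switching epochs (the epochs $a_i, d_i$ themselves move continuously with the parameters, but one should check there is no degenerate coalescing of switching times that breaks continuity of the integrated cost), and that the constraint set is genuinely closed and bounded rather than merely bounded — in particular that the ``emptied at least once'' normalization can be imposed without excluding the optimum, which is exactly the reduction argument sketched in the paragraph preceding the lemma.
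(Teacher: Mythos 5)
Your proposal is correct and reaches the same conclusion by the same overall device (Weierstrass over a compact finite-dimensional parameter set, with compactness supplied by the ``each queue emptied at least once'' normalization), but via a different coordinatization. You parameterize PE-candidates by $(\mathbf b, q_e^L(0))$ and so must (i) argue compactness of the feasible set directly, from the busy-time simplex, the loop condition, trajectory nonnegativity, and an a priori bound on $q_e^L(0)$, and (ii) verify continuity of the trajectory map in the parameters across possibly coalescing switching epochs; you flag both as obstacles. The paper instead coordinatizes by the proportion-reduction vector $\mathbf r$ through the bijection of Lemma \ref{lem:bijective}: the normalization then places $\mathbf r$ in the manifestly compact set $\mathcal R' := \{\mathbf r \in [0,1]^{IL} : \sum_{i:p(i)=k} r_i \geq 1 \text{ for all } k\}$, and the values $(q^L_{e,k}(a_{k_j}))$ characterizing the PE-candidate are the unique solution of the linear system \eqref{eq:PRCRecursion}, whose dependence on $\mathbf r$ is automatically continuous. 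That simultaneously disposes of both of your worries, at the price of invoking Lemma \ref{lem:bijective} and the recursion \eqref{eq:PRCRecursion}. Your coordinates are more elementary and make the constraint structure explicit but require more bookkeeping; the paper's coordinates import the structure needed and keep the argument short. For what it is worth, your concerns are benign: the trajectory depends jointly and uniformly continuously on $(t, \mathbf b, q_e^L(0))$ even when switching epochs degenerate (it is piecewise linear with continuously moving breakpoints), and the normalization is legitimate exactly by the shift-down construction given just before the lemma, which maps any PE-candidate to a normalized one of no larger cost, so the infima over the normalized and unnormalized sets coincide.
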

The proof of the lemma builds on Lemmas \ref{lem:bijective} and \ref{lem:PRC} which are stated below, and is therefore relegated to Section \ref{secProofsThm&Lem}.

An analogous result to Proposition \ref{LEM:LINEARCYCLIC} holds for general cost functions when $L=1$, due to the aforementioned fact that,
in an optimal PE-candidate, each queue must be exhausted at least once. We therefore have:
\begin{proposition} \label{cor:5.3}
If the basic table is cyclic, then $q_{exh}$ is a solution to \eqref{eq:FluidOpt2} with $L=1$.
\end{proposition}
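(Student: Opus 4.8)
The plan is to reduce the $L=1$ case of \eqref{eq:FluidOpt2} to a comparison against the exhaustive PE-candidate $q_{exh}$, exploiting the observation (already made in the text preceding Section \ref{secOptimalPErelax}) that every queue must be exhausted at least once along an optimal PE-candidate, together with the fact that $\psi$ is nondecreasing. First I would invoke Lemma \ref{lem:ExistenceOfSolution} to obtain a solution $q_e^1 \in \mathcal Q^1$ of \eqref{eq:FluidOpt2} with $L=1$; the goal is then to show $\frac{1}{\tau_1}\int_0^{\tau_1}\psi(q_{exh}(u))\,du \le \frac{1}{\tau_1}\int_0^{\tau_1}\psi(q_e^1(u))\,du$. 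Since the basic table is cyclic, each queue $k$ is visited exactly once per server cycle, so a one-cycle PE-candidate is determined by a single busy time $b_k$ per queue (equivalently, by the fraction of its polling-epoch content that is removed) together with one anchor point; and $q_{exh}$ corresponds to the choice $b_k$ maximal, i.e. the queue is driven to $0$ at its departure epoch.

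The key steps, in order, would be: (1) parametrize a one-cycle PE-candidate $q_e^1$ by its departure/polling epochs within the (fixed-length, by \eqref{eq:57}) cycle and the value it takes at one point, as in Section \ref{sec:computePE}; by the argument already given in the text, WLOG $q_{e,k}^1(t_k)=0$ for some $t_k$, for every $k$. (2) Because the table is cyclic, the $K$ queues evolve ``independently'' in the sense that queue $k$ increases at rate $\lambda_k$ everywhere except during its own single visit, where it decreases at rate $\mu_k-\lambda_k$; hence the entire trajectory of coordinate $k$ is governed by the single scalar parameter (polling epoch / amount removed) for that queue, and the coordinates decouple. (3) For $q_{exh}$, each coordinate $q_{exh,k}$ is the unique one-cycle periodic sawtooth for queue $k$ that touches $0$; I would show pointwise in time that $q_{exh,k}(u) \le q_{e,k}^1(u)$ for every $u$ and every $k$ — intuitively, among periodic sawtooths for queue $k$ with the given up/down rates and fixed cycle length, the one that reaches $0$ is everywhere the lowest. (A clean way: if $q_{e,k}^1$ never hits $0$, shift it down as in the text to reduce cost, contradicting optimality unless it already coincides with $q_{exh,k}$; and the exhaustive service simply serves at least as long, so it stays below.) (4) Since $\psi$ is nondecreasing and $q_{exh}(u) \le q_e^1(u)$ coordinatewise for all $u$, we get $\psi(q_{exh}(u)) \le \psi(q_e^1(u))$ for all $u$; integrating over $[0,\tau_1]$ and dividing by $\tau_1$ gives the claim, so $q_{exh}$ is itself a solution to \eqref{eq:FluidOpt2} with $L=1$.

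The main obstacle I anticipate is step (3): making rigorous the claim that the exhaustive one-cycle trajectory is pointwise minimal among all one-cycle PE-candidates. One has to be careful that a PE-candidate is only constrained to be a closed orbit of the HDS — the polling epochs are themselves determined by the flow-balance/periodicity constraints — so ``serving longer at queue $k$'' shifts queue $k$'s sawtooth but, in a cyclic table, does not affect the other coordinates' shapes (only, possibly, the common phase), which is exactly why cyclicity is used. I would handle this by working coordinatewise: fix $k$, note that over a cycle of fixed length $\tau_1 = s/(1-\rho)$ the coordinate $q_{e,k}^1$ rises at rate $\lambda_k$ for a total time $\tau_1 - b_k$ and falls at rate $\mu_k-\lambda_k$ for time $b_k$, with $b_k$ the only free parameter subject to periodicity ($\lambda_k(\tau_1-b_k) = (\mu_k-\lambda_k)b_k$, forcing $b_k = \lambda_k \tau_1/\mu_k$ in fact the same for all candidates) — so actually the \emph{shape} is forced and only the minimum level (the anchor) is free, and minimality of $q_{exh,k}$ is the statement that the optimal anchor is the one giving minimum value $0$, which is precisely the text's shift-down argument. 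Once this coordinatewise picture is set up, steps (1), (2), (4) are routine, and monotonicity of $\psi$ does the rest.
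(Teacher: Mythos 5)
Your proof is correct and is essentially the same shift-down argument the paper invokes (the text just before Section~\ref{secOptimalPErelax}): since the table is cyclic and $L=1$, periodicity forces $b_k=\rho_k\tau_1$ and hence the shape of each coordinate's sawtooth, so every one-cycle PE-candidate equals $q_{exh}$ plus a nonnegative constant coordinatewise, and monotonicity of $\psi$ finishes it. Your version is slightly more self-contained in that it establishes the pointwise domination $q_{exh}\le q_e^1$ directly rather than first appealing to existence of an optimizer and then showing it must hit zero, but the underlying idea is identical.
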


Finally, to demonstrate that $q_{exh}$ is not an optimal PE-candidate in general, consider a system with three queues
and basic (non-cyclic) table $(1,2,3,2,3)$. 
We take $\lambda_k=2$, $\mu_k = 8$, $s_k = 2$ for $k = 1,2,3$, and $\psi$ to be linear with $c_1 = c_2 = 1$, and consider $\mathcal N = \{1\}$.
(We remark that the solution remains unchanged when we optimize over larger values of $L$;
we conjecture that the optimal one-cycle PE-candidate also solves \eqref{eq:FluidOpt}.)
If $c_3 > 4$, then it is optimal to {\em not exhaust} $q_2$ at stage $2$. Moreover, the proportion of fluid processed at stage $2$
is decreasing to $0$ as $c_3$ increases.
It is easy to explain why $q_2$ is not exhausted in one of its visits. Specifically,
as the holding cost of $q_3$ increases, it becomes more and more advantageous to keep this queue smaller at the expense of
making $q_2$ larger. This can be achieved while keeping $q_2$ (and its corresponding holding cost) bounded,
because $q_2$ is visited twice, so the server has an opportunity to exhaust it in a server cycle.

\subsection{The SB-PR Control} \label{Sec:PRC}
Consider an $L$-cycle PE-candidate $q^L_e$, and let $r_i$ denote the proportion by which the queue polled in stage $i \in \mathcal I^L$ is reduced.
In particular, with $a_i$ and $d_i$ denoting, respectively, the polling epoch and departure epoch of stage $i$,
\begin{equation} \label{r_i}
r_i :=
\begin{cases}
{q_{e,p(i)}^L(a_i) - q_{e,p(i)}^L(d_i) \over q^L_{e,p(i)}(a_i)} \quad &\text{if } q^L_{e,p(i)}(a_i) > 0 \\
0 & \text{otherwise}
\end{cases}, \quad
i \in \mathcal I^L .
\end{equation}
Clearly, one can always represent a PE-candidate via parameters $(L, \mathbf r)$, where $\mathbf r := (r_i, i \in \mathcal I^L)$ is a vector
whose component $r_i$ is defined in \eqref{r_i}.
The following lemma shows that the reverse is also true; its proof is deferred to Section \ref{secProofsThm&Lem}.
For a given system, recall $\mathcal R$ in \eqref{eq:r_condition_of_sum} and that $\mathcal{Q}$ is the set of all PE-candidates.

\begin{lemma} \label{lem:bijective}
For any $(L, \mathbf r) \in \NN \times \mathcal R$,
there exists a unique $L$-cycle PE-candidate $q_e^L$ such that \eqref{r_i} is satisfied.
In particular, the function $q_e^L \mapsto (L,\mathbf r)$ is a bijection between $\mathcal{Q}$ and $\NN \times \mathcal R$.
\end{lemma}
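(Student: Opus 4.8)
The plan is to construct, for each $(L, \mathbf r) \in \NN \times \mathcal R$, a unique closed curve in $\RR_+^K$ that solves the HDS~\eqref{HDSgeneral} as a PE and satisfies the proportionality relations~\eqref{r_i}, and then to observe that the map in~\eqref{r_i} inverts this construction. The key point is that a PE-candidate having cycle length $\tau_L = sL/(1-\rho)$ (see~\eqref{eq:57}) is completely determined by its busy times $(b_i, i \in \mathcal I^L)$ together with a single anchoring point on the orbit, because between consecutive switching epochs the dynamics are the affine flow given by $f$. So the real content is: given $\mathbf r$, solve for the unique vector of busy times $(b_i)$ and the unique starting queue-length vector consistent with periodicity and with~\eqref{r_i}.

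First I would set up the flow-balance/periodicity equations. Over one $L$-cycle, each queue $k$ must return to its starting value, which pins down the total amount of fluid served at the visits to queue $k$: the net change $\lambda_k \tau_L - \mu_k \sum_{j : p(k_j)} b_{k_j} = 0$, i.e. $\sum_j b_{k_j} = \rho_k \tau_L / \rho \cdot (\text{something})$ — more precisely $\mu_k \sum_j b_{k_j} = \lambda_k \tau_L$, so $\sum_j b_{k_j} = \rho_k \tau_L$. This fixes the \emph{total} busy time per queue but not its split among multiple visits. Next, writing the queue value at the polling epoch $a_{k_j}$ of the $j$th visit in terms of the previous departure value plus linear growth over the intervening switchover-and-other-service intervals, the proportionality rule~\eqref{r_i} says the server removes exactly $r_{k_j} q_{p(k_j)}(a_{k_j})$ at that visit; since the drift while serving is $-(\mu_k - \lambda_k)$, this gives $b_{k_j} = r_{k_j} q_{p(k_j)}(a_{k_j}) / (\mu_k - \lambda_k)$. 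Threading these relations around the cycle yields, for each queue $k$, a linear system expressing the visit-start values $(q_k(a_{k_j}))_j$ in terms of one another; substituting back into $\sum_j b_{k_j} = \rho_k \tau_L$ produces a single scalar equation that determines the remaining free constant for queue $k$. The condition $\mathbf r \in \mathcal R$, i.e. $\sum_{j} r_{k_j} > 0$, is exactly what guarantees this scalar equation has a (unique, strictly positive) solution — if all $r_{k_j}=0$ the queue is never reduced and no PE exists. I would check that the resulting coefficient matrix for each queue is invertible (it is triangular-plus-a-corner, with the "corner" coming from wrapping around the cycle, and the product of the "retention factors" $\prod_j(1-r_{k_j}) < 1$ whenever $\sum_j r_{k_j}>0$, which is precisely the non-degeneracy needed).

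Once the busy times and the anchor point are determined, I would verify that the curve they generate is genuinely a solution to the HDS — that the switching function $g$ indeed fires at the points prescribed, i.e. that the constructed $(b_i)$ are nonnegative and that the non-idling constraint $b_i \le q_{p(i)}(a_i)/(\mu_{p(i)} - \lambda_{p(i)})$ holds; both follow because $r_i \in [0,1]$ and the reduction is a fixed proportion, so $b_i = r_i q_{p(i)}(a_i)/(\mu_{p(i)}-\lambda_{p(i)}) \le q_{p(i)}(a_i)/(\mu_{p(i)}-\lambda_{p(i)})$. Uniqueness is immediate from the construction, since at every stage the equations had a unique solution; and the map $q_e^L \mapsto (L, \mathbf r)$ via~\eqref{r_i} is a well-defined left inverse (a given PE-candidate has a well-defined cycle-length multiple $L$ and well-defined reduction proportions), so the correspondence is a bijection between $\mathcal Q$ and $\NN \times \mathcal R$. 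The main obstacle I anticipate is the bookkeeping for queues visited multiple times per cycle: showing that the "wrap-around" linear system for the visit-start values is nonsingular exactly under the hypothesis $\mathbf r \in \mathcal R$, and handling the degenerate stages where $r_i = 0$ (or $s_i = 0$) cleanly so that the indices $j_s, j_\phi$ in the definition of $g$ behave as intended. Everything else is routine linear algebra on the flow equations.
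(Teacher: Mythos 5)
Your construction is genuinely different from the paper's, and unfortunately it contains a gap that, as far as I can see, cannot be patched by the per-queue reasoning you propose. The issue is in the sentence ``Threading these relations around the cycle yields, for each queue $k$, a linear system expressing the visit-start values $(q_k(a_{k_j}))_j$ in terms of one another.'' This linear system is \emph{not} self-contained in queue $k$'s unknowns: the inter-visit intervals $e_{k_j,k_{j+1}}$ that appear in the threading relations are sums of switchover times \emph{and of the busy times $b_\ell$ at the intervening stages}, which serve other queues; those busy times are themselves proportional (via the rule $b_\ell = r_\ell q_{p(\ell)}(a_\ell)/(\mu_{p(\ell)}-\lambda_{p(\ell)})$) to the \emph{other} queues' polling values. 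So the unknowns of all $K$ queues are coupled, and the claim that one obtains ``a single scalar equation that determines the remaining free constant for queue $k$'' does not hold: once $\mathrm{dim}(\mathbf k) > 1$ (or even when $\mathrm{dim}(\mathbf k)=1$ but the other $b_\ell$'s are unknown), the per-queue system has more unknowns than equations. Likewise, the proposed nonsingularity criterion via $\prod_j(1-r_{k_j})<1$ is a statement about a one-dimensional recursion, not about the full coupled $IL\times IL$ system, so it does not by itself establish invertibility.

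The paper's route is quite different and is designed precisely to bypass this coupling. It works with the single affine operator $\Gamma' = \Gamma_{IL}\circ\cdots\circ\Gamma_1 : \RR_+^K \to \RR_+^K$ that advances the full queue vector by one server cycle under SB-PR with parameters $(L,\mathbf r)$, and it shows in the proof of Lemma \ref{lem:PRC} that the spectral radius of its linear part $\mathcal A'$ is strictly less than $1$. This is done by an elegant workload argument: $\mathcal A'$ does not depend on the switchover times, so one may evaluate it on an auxiliary system with $s_i\equiv 0$, where the total workload is strictly decreasing whenever positive; applying this to the Perron--Frobenius eigenvector of the nonnegative matrix $\mathcal A'$ forces $\varrho(\mathcal A')<1$. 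Banach's fixed-point theorem then gives a unique fixed point, i.e.\ the unique PE, and Lemma \ref{lem:bijective} is deduced as a direct corollary. If you want to salvage a more ``elementary'' linear-algebra proof, you would have to work directly with the full coupled linear system in \eqref{eq:PRCRecursion} and prove the corresponding $IL\times IL$ coefficient matrix is invertible exactly when $\mathbf r\in\mathcal R$; the paper's contraction/workload argument is effectively a structured way of doing just that, and it is not clear the invertibility can be seen by inspection.
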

Lemma \ref{lem:bijective} motivates our proposed SB-PR control, which will be shown to be fluid optimal in Theorem \ref{corSBPRopt} below.

\begin{definition} [SB-PR control] \label{def:SB-PR}
Let $(L, \mathbf r) \in \NN \times \mathcal R$.
The SB-PR control with parameters $(L, \mathbf r)$ has the service function
\begin{equation*}
\phi_i(q)=r_{i}q_{p(i)}/ (\mu_{p(i)} - \lambda_{p(i)}), \quad i \in \mathcal{I}^L,
\end{equation*}
for $\phi_i$ in \eqref{phi}. In particular, at each stage $i$, the server reduces the polled queue to a proportion $1-r_i$ of its value at the
polling epoch of this stage.
\end{definition}

Let $(L_*, \mathbf r_*)$ denote the SB-PR control parameters corresponding to a solution to \eqref{eq:FluidOpt} (and optimal PE-candidate for the FCP),
and for $\mathcal N \subset \NN$, let $(L_\mathcal N, \mathbf r_\mathcal N)$ denote the SB-PR control parameters corresponding to a solution to \eqref{eq:FluidOpt2}
(an optimal PE-candidate for the RFCP).

\begin{theorem}[optimality of SB-PR] \label{corSBPRopt}
SB-PR with parameters $(L_*, \mathbf r_*)$ is a solution to the FCP \eqref{FCP}.
Similarly, SB-PR with parameters $(L_\mathcal N, \mathbf r_\mathcal N)$ is a solution to the RFCP \eqref{FCPrelax}.
\end{theorem}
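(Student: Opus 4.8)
The plan is to prove the two assertions together, since they have the same structure: in each case we have an optimal PE-candidate ($q_*$ for the FCP, $q_\mathcal N$ for the RFCP), and we must show that (a) the SB-PR control with the associated parameters produces this PE-candidate as a solution of the HDS whenever the initial condition lies on its orbit, and (b) this SB-PR control belongs to $\Phi$ (respectively $\Phi^{L_\mathcal N}$), i.e.\ for \emph{every} initial condition $\gamma \in \RR_+^K$ the HDS under SB-PR has a unique solution and that solution converges to the target PE-candidate, which is therefore a global limit cycle. Once (a) and (b) are in place, Lemma~\ref{lem:LongRunCost} gives $\lim_{t\to\infty}\frac1t\int_0^t\psi(q^\gamma_{\phi_*}(u))\,du = \frac1{\tau_*}\int_0^{\tau_*}\psi(q_*(u))\,du$ for every $\gamma$, and by the defining optimality \eqref{optPE} of $q_*$ (resp.\ the $L$-cycle optimality of $q_\mathcal N$) this common value equals $c_*$ (resp.\ $c_\mathcal N$), which is exactly \eqref{FCP} (resp.\ \eqref{FCPrelax}). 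So SB-PR with $(L_*,\mathbf r_*)$ solves the FCP and with $(L_\mathcal N,\mathbf r_\mathcal N)$ solves the RFCP.

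For step (a): by Definition~\ref{def:SB-PR}, the SB-PR service function $\phi_i(q)=r_i q_{p(i)}/(\mu_{p(i)}-\lambda_{p(i)})$ is precisely the busy time needed to bring the polled queue from $q_{p(i)}(a_i)$ down to $(1-r_i)q_{p(i)}(a_i)$; comparing with the definition \eqref{r_i} of $r_i$ in terms of $q_e^L$, one checks that if the trajectory starts at a point on the orbit of the PE-candidate $q_e^L$ (with the server at the matching location $z$), then the HDS dynamics \eqref{HDSgeneral} reproduce $q_e^L$ exactly. This is essentially the content of Lemma~\ref{lem:bijective}, which establishes the bijection between PE-candidates and pairs $(L,\mathbf r)\in\NN\times\mathcal R$; I would cite it and add only the short verification that the SB-PR control is the switching rule realizing the $(L,\mathbf r)$ direction of that bijection.

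The main obstacle is step (b), the \emph{global} convergence: showing that for an arbitrary initial $\gamma\in\RR_+^K$, the SB-PR trajectory converges (in the sense of Definition~\ref{defConvPE}) to $q_e^L$, so that $q_e^L$ is a global limit cycle. The natural approach is to track the state at successive polling epochs of stage $1$, i.e.\ the map $\tilde q(m)\mapsto \tilde q(m+1)$ induced on $\RR_+^K$ by one server cycle of SB-PR, and to show this map is a contraction (or at least that its iterates converge to the fixed point corresponding to $q_e^L$). Because the proportional-reduction rule makes the post-visit queue an affine-linear function of the pre-visit state with multiplier $(1-r_i)$ at the $i$-th visit, composing over a server cycle should yield an affine map whose linear part is a product of such contraction-type factors along with the arrival increments; the hypothesis $\mathbf r\in\mathcal R$ (each queue reduced by a positive total proportion) is exactly what forces the relevant spectral radius below $1$. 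I expect this one-server-cycle-map analysis — including handling the stages with zero busy time or zero switchover time, and verifying existence/uniqueness of the HDS solution at every $\gamma$ (no simultaneous switching ambiguities, using the $j_s<j_\phi$ tie-breaking in the definition of $g$) — to be the technical heart of the argument, and I would expect it to be carried out in a dedicated lemma (the sort of qualitative HDS analysis promised for Section~\ref{Sec:PRC}) that this theorem then invokes. Given such a lemma, the remainder is the bookkeeping in the first paragraph.
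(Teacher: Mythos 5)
Your proposal matches the paper's actual proof structure: Theorem \ref{corSBPRopt} follows from Corollary \ref{cor:LongRunCostSBPR}, which in turn combines Lemma \ref{lem:LongRunCost} with Lemma \ref{lem:PRC} (the ``dedicated lemma'' you anticipate), and Lemma \ref{lem:PRC} shows exactly what you predict, namely that the one-server-cycle map $\Gamma'$ is an affine, positively invariant contraction on $\RR_+^K$. One caution on your sketch of why $\varrho(\mathcal A') < 1$: the matrices $\mathcal A_i$ are not themselves contractions (the $p(i)$th column has off-diagonal entries $\lambda_k r_i/(\mu_{p(i)}-\lambda_{p(i)})$ that need not be small), and the $\mathcal R$-condition alone does not bound the spectral radius; the paper's argument hinges on $\rho < 1$, via a Perron--Frobenius eigenvector of $\mathcal A'$ and the observation that the workload in an auxiliary zero-switchover system must strictly decrease over one server cycle, so if you carry out the ``technical heart'' you defer, the $(1-r_i)$ multipliers alone will not close the argument.
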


The proof of Theorem \ref{corSBPRopt} relies on the following lemma, which establishes, in particular, that any PE-candidate is a bona-fide PE
under the corresponding SB-PR control, and that this PE is a global limit cycle.

\begin{lemma}[global stability of SB-PR]  \label{lem:PRC}
Let $q_e^L$ be an $L$-cycle PE-candidate, and let $\mathbf r$ be the corresponding vector of ratios defined for $q_e^L$ via \eqref{r_i}.
Then $q_e^L$ is a global limit cycle for the HDS \eqref{HDSgeneral} under SB-PR with parameters $(L,\mathbf r)$.
\end{lemma}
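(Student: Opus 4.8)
The plan is to fix an $L$-cycle PE-candidate $q_e^L$ with associated ratio vector $\mathbf r$ given by \eqref{r_i}, and track an arbitrary solution $q := q^\gamma_{\phi}$ of the HDS \eqref{HDSgeneral} under SB-PR with parameters $(L,\mathbf r)$, starting from an arbitrary $\gamma \in \RR_+^K$. The first step is to verify that the SB-PR control makes $q_e^L$ into a bona-fide PE: because SB-PR sets $\phi_i(q) = r_i q_{p(i)}/(\mu_{p(i)}-\lambda_{p(i)})$, the busy time at stage $i$ when the polled queue is at level $q_{e,p(i)}^L(a_i)$ is exactly the busy time built into $q_e^L$ by \eqref{r_i}, so the trajectory started on the orbit of $q_e^L$ reproduces it; this essentially re-derives the forward direction of Lemma \ref{lem:bijective}. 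The substantive part is global convergence: I would show $\|q(u^{(m)}+\cdot) - q_e^L(\cdot)\|_t \to 0$ as $m\to\infty$ for all $t>0$, in the sense of Definition \ref{defConvPE}.

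The core idea is to set up a contraction in the discrete time index of polling epochs. Track the queue-length vector sampled at the polling epochs of each stage within a server cycle; SB-PR is an affine map on each such coordinate. Concretely, during the switchover/idle portions each queue simply accumulates fluid at its arrival rate for a duration that itself depends affinely on the polled queue levels (through $\phi_i$ and the fixed $s_i$), and at a visit to queue $k$ the level is multiplied by $(1-r_i)$. Composing these affine maps over one full server cycle yields an affine Poincaré-type map $P$ on $\RR_+^K$ (state sampled at the stage-$1$ polling epoch), whose unique fixed point is the point of $q_e^L$ at that epoch. The contraction factor of the linear part of $P$ is governed by the product of the $(1-r_i)$ factors together with the "fluid growth" multipliers; the key algebraic fact is that each queue $k$ is exhausted — or at least reduced — enough times over a server cycle that the corresponding product of contraction factors is strictly less than $1$. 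The condition $\mathbf r \in \mathcal R$, i.e. $\sum_{\{i: p(i)=k\}} r_i > 0$, is precisely what guarantees this for every $k$; since for a queue visited once and fully exhausted the sampled level is reset exactly to the PE value in one cycle (factor $0$), and partial reductions still shrink the discrepancy geometrically. I would make this precise by writing the one-cycle map in block-triangular form (ordering coordinates by first visit within the cycle) and reading off that each diagonal block has spectral radius $<1$.

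Once $\|q(u^{(m)}) - q_e^L(0)\| \to 0$ geometrically, the convergence $\|q(u^{(m)}+\cdot)-q_e^L(\cdot)\|_t\to 0$ over any compact $[0,t]$ follows from continuous dependence of the HDS dynamics over a finite number of server cycles on the initial point — the switching epochs $a_i^{(m)}, d_i^{(m)}$ are continuous (indeed affine) functions of $q(u^{(m)})$ as long as the polled queues stay positive, and the fluid trajectory between switches is affine. One technical point to handle carefully is the boundary behaviour: if some $r_i=1$ (exhaustive visit) the queue hits $0$, and one must check the non-idling rule and the definition of $g$ in \eqref{HDSgeneral} are consistent there, and that a transient trajectory starting far from the orbit still performs the same sequence of visits (no queue is "skipped" because it is found empty at a polling epoch) after it enters a neighbourhood of $q_e^L$; for $\gamma$ far from the orbit one argues the first cycle already brings every coordinate into the region where the visit structure is the nominal one, using that fluid always accumulates between visits.

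The main obstacle I anticipate is precisely this last point — establishing that the nominal sequence of server visits (and hence the affine structure of the Poincaré map) is in force from \emph{every} initial condition $\gamma\in\RR_+^K$, not just from initial conditions near the orbit. In particular one must rule out degenerate transients in which a queue is still empty at its polling epoch (so the server waits, per the admissibility convention) in a way that perturbs the map, and one must handle the interaction with stages having $s_i=0$. I would address this by a short argument that after at most one server cycle every queue is strictly positive at each of its polling epochs (each queue receives positive arrival input over any server cycle of length $\tau_L = sL/(1-\rho) > 0$ by \eqref{eq:57}), so the affine/contraction analysis applies from cycle $m=1$ onward, which suffices for the $m\to\infty$ limit.
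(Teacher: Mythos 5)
Your overall strategy matches the paper's: both set up the one-server-cycle return map $\Gamma'(q)=\mathcal A'q+\mathcal B'$ as a composition of stage-wise affine maps and aim to show it is a contraction on $\RR_+^K$. The paper then invokes \cite[Lemma 5.1]{matveev2016global}, which reduces everything to proving $\varrho(\mathcal A')<1$ for the (positively invariant, affine) map. You also correctly flag the auxiliary issues (recovering $q_e^L$ as a fixed orbit, continuous dependence over finitely many cycles, ruling out degenerate visit sequences after a transient); these are handled consistently with the paper's framing.

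The gap is in the step where you claim $\varrho(\mathcal A')<1$. You propose to ``order coordinates by first visit within the cycle'' so that the one-cycle map becomes block-triangular with diagonal blocks of spectral radius $<1$, the contraction coming from products of the $(1-r_i)$. This does not work. Each $\mathcal A_i$ is the identity perturbed in its $p(i)$th column (with $(1-r_i)$ on the diagonal there and $\lambda_j\,r_i/(\mu_{p(i)}-\lambda_{p(i)})$ for $j\neq p(i)$), so composing over a full cycle produces a generally dense matrix: e.g.\ in the cyclic exhaustive case $q_1(u^{(1)})$ accumulates arrivals during the busy times $b_2,\dots,b_K$, and each $b_j$ is affine in $q_j(a_j)$, which in turn reaches back to $q_1(u^{(0)})$ through earlier busy times. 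No ordering of coordinates makes this triangular. More tellingly, your argument never invokes the stability condition $\rho<1$, yet $\varrho(\mathcal A')<1$ is \emph{false} without it — for $\rho>1$ the return map is expansive even with every $r_i=1$ — so any correct proof must use $\rho<1$ somewhere. The products of $(1-r_i)$ along a single coordinate miss the cross-queue coupling entirely; in fact for the exhaustive control every $r_i=1$ gives a $0$ on each diagonal, but that says nothing about the spectral radius of the full coupled matrix.

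The paper's route is different and worth internalizing: it observes that $\mathcal A'$ does not depend on the switchover times $s_i$, so one may compute $\varrho(\mathcal A')$ in an \emph{auxiliary} system with $s_i\equiv 0$ and the same rates. There the workload $W(t)=\sum_k q^a_k(t)/\mu_k$ satisfies $\dot W=\rho-1<0$ whenever $W>0$, because the server is never idle. By Perron--Frobenius the nonnegative matrix $\mathcal A'$ has a top eigenvalue $\varrho(\mathcal A')>0$ with a strictly positive eigenvector $v$, which is therefore an admissible initial state; starting the auxiliary system from $v$, after one cycle the state is $\varrho(\mathcal A')\,v$, and the strict workload decrease forces $\varrho(\mathcal A')\sum_k v_k/\mu_k<\sum_k v_k/\mu_k$, i.e.\ $\varrho(\mathcal A')<1$. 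This is where $\rho<1$ does its work, and it bypasses any need for a triangular structure. To repair your proposal you would need to replace the block-triangular step with an argument of this kind (or another genuine spectral bound that uses $\rho<1$); the remainder of your outline can then stand.
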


\begin{proof}[Proof of Lemma \ref{lem:PRC}]
For the HDS under SB-PR with parameters $(L, \mathbf r) \in \NN \times \mathcal R$,
define the operator $\Gamma_i : \RR_+^K \arr \RR_+^K$, $i \in \mathcal{I}^L$,
mapping the queue length at the polling epoch of stage $i$ to that at the polling epoch of stage $i+1$.
Note that during the busy time of stage $i$, queue $p(i)$ decreases at rate $\mu_{p(i)}-\lambda_{p(i)}$,
and any other queue $k \neq p(i)$ increases at rate $\lambda_k$.
If $q$ is the queue length at the polling epoch of stage $i$, then the busy time at stage $i$ lasts for $r_i q_{p(i)}/(\mu_{p(i)}-\lambda_{p(i)})$ units of time,
which is the time it takes to reduce queue $p(i)$ to $(1-r_i) q_{p(i)}$.
During the switchover time from stage $i$ to stage $i+1$, each queue $k \in \mathcal K$ increases at rate $\lambda_k$, and the switching takes $s_i$ unit of time.

For $i \in \mathcal I^L$, let
\[\Gamma_i(q) := \mathcal{A}_i q + \mathcal{B}_i,\]
where $\mathcal{A}_i$ is the $K \times K$ square matrix and $\mathcal{B}_i \in \RR^K$ are given by
\begin{equation*}
\mathcal{A}_i :=
\begin{blockarray}{*{7}{c} l}
\begin{block}{*{7}{>{$\footnotesize}c<{$}} l}
&  &  & $p(i)$th column & &  & & \\
\end{block}
\begin{block}{[*{7}{c}]>{$\footnotesize}l<{$}}
1 & 0 & \cdots & \lambda_1 \frac{r_i}{\mu_{p(i)} - \lambda_{p(i)}} & \cdots & 0 & 0   \\
0 & 1 & \cdots & \lambda_2 \frac{r_i}{\mu_{p(i)} - \lambda_{p(i)}} & \cdots & 0  & 0 \\
\vdots & \vdots & \ddots & \vdots & \ddots & \vdots & \vdots \\
0 & 0 & \cdots & 1 - r_i & \cdots & 0  & 0 & $p(i)$th row,  \\
\vdots & \vdots & \ddots & \vdots & \ddots & \vdots & \vdots \\
0 & 0 & \cdots & \lambda_{K-1} \frac{r_i}{\mu_{p(i)} - \lambda_{p(i)}} & \cdots & 1 & 0   \\
0 & 0 & \cdots & \lambda_K \frac{r_i}{\mu_{p(i)} - \lambda_{p(i)}} & \cdots & 0 & 1   \\
\end{block}
\end{blockarray}
\quad
\mathcal{B}_i :=
\begin{bmatrix}
\lambda_1 s_i    \\
\vdots \\
\lambda_{p(i)-1} s_i   \\
\lambda_{p(i)} s_i  \\
\lambda_{p(i)+1} s_i      \\
\vdots \\
\lambda_{K} s_i      \\
\end{bmatrix} ,
\end{equation*}
so that
\begin{equation*}
\mathcal{A}_i q :=
\begin{bmatrix}
q_1 + \lambda_1 \frac{r_i q_{p(i)}}{\mu_{p(i)} - \lambda_{p(i)}}     \\
\vdots \\
q_{p(i)-1} + \lambda_{p(i)-1} \frac{r_i q_{p(i)}}{\mu_{p(i)} - \lambda_{p(i)}}     \\
(1-r_i) q_{p(i)} \\
q_{p(i)+1} + \lambda_{p(i)+1} \frac{r_i q_{p(i)}}{\mu_{p(i)} - \lambda_{p(i)}}     \\
\vdots \\
q_{K} + \lambda_{K} \frac{r_i q_{p(i)}}{\mu_{p(i)} - \lambda_{p(i)}}     \\
\end{bmatrix}.
\end{equation*}

Let $\Gamma':= \Gamma_{IL} \circ ... \circ \Gamma_1$ be the composition operator over one server cycle, namely, the operator mapping
the value of the queue at the beginning of a server cycle to its value at the beginning of the subsequent server cycle. Then
\[
\Gamma'(q)=\mathcal{A}' q + \mathcal{B}'
\]
for
\[
\mathcal{A}' := \mathcal{A}_{IL}\cdots\mathcal{A}_1 \quad  \mbox{and} \quad
\mathcal{B}' := \sum_{i=1}^{IL-1}\left(\prod_{j=i+1}^{IL}\mathcal A_{j}\right) \mathcal B_i + \mathcal B_{IL}.
\]

Since each of the operators $\Gamma_i$, $i \in \mathcal{I}^L$, is affine and {\em positively invariant}, the same is true for $\Gamma'$.
(An affine operator is positively invariant if it maps $\RR^K_+$ into itself; see \cite[p.10]{matveev2016global}.)
By Lemma 5.1 in \cite{matveev2016global}, if $\varrho(\mathcal{A'}) < 1$, where $\varrho(\mathcal{A'})$ denotes the spectral radius of the matrix $\mathcal A'$,
then the positively invariant affine operator $\Gamma'(q)$ is a contraction mapping in $\RR^K_+$.

Hence, we next show that $\varrho(\mathcal{A}') < 1$.
To this end, observe that $\mathcal{A}'$ does not depend on the switchover times, so that
if the switchover times in the system are changed, but the arrival and service rates are kept fixed, then the matrix $\mathcal A'$ remains unchanged.
In particular, the matrix $\mathcal A'$ does not change if the switchover times in the system under consideration
are modified to $s_i = 0$ for all $i \in \mathcal I^L$, with all other parameters remaining unchanged.

Consider an auxiliary system that has the same parameters as the system under consideration,
except that $(s_i, i \in \mathcal I^L) = \mathbf 0$, and denote its queue process by $q^{a} := \{q^{a} (t) : t \ge 0\}$.
Let $W(t) := \sum_{k \in \mathcal{K}} q^a_k(t) / \mu_k$ denote the total workload in this auxiliary system at time $t$.
Since all the switchover times are null, the server is busy at all times in the set $W_+ := \{t : W(t) > 0\}$, so that
\begin{equation*}
\dot{W}(t) = \sum_{\ell \in \mathcal{K}, \ell \neq k} \frac{\lambda_\ell}{\mu_\ell} + \frac{\lambda_k - \mu_k}{\mu_k} = \rho - 1 < 0, \quad t \in W_+.
\end{equation*}
Now, $\mathcal{A}'$ is a non-negative square matrix, and so by the Perron-Frobenius theorem (e.g., \cite[Chapter 8.3]{meyer2000matrix}),
it has a maximal eigenvalue which is strictly positive. This implies that $\varrho(\mathcal{A}')> 0$, and that
the eigenvector $v$ associated with $\varrho(\mathcal{A}')$ has strictly positive components.
Hence, the eigenvector $v$ is a legitimate state for the queue process $q^{a}$.

Take $q^{a}(0) = v$.
Then, at the end of the first server cycle, we have
$q^{a}(u^{(1)}) = \mathcal{A}' v = \varrho(\mathcal{A}') v$, with the second equality holding because $v$ and $\varrho(\mathcal A')$
are the associated eigenvector and eigenvalue of $\mathcal A'$.
In addition, the workload in the system changes from $W(0) = \sum_{k \in \mathcal{K}} v_k / \mu_k$
to $W(u^{(1)}) =\sum_{k \in \mathcal{K}} \varrho(\mathcal{A}') v_k/\mu_k$.
Since the workload process $W$ is strictly decreasing, it holds that
\begin{equation*}
\varrho(\mathcal{A}') \sum_{k \in \mathcal{K}} \frac{v_k}{\mu_k} < \sum_{k \in \mathcal{K}} \frac{v_k}{\mu_k},
\end{equation*}
so that $\varrho(\mathcal A') < 1$, from which it follows that $\Gamma'$ is a contraction mapping in $\RR^K$. In turn, under SB-PR
(with any control parameters $(L,\mathbf r) \in \NN \times \mathcal R$),
there exists a global limit cycle for the HDS\footnote{It is easily seen that the global limit cycle
under SB-PR for a system with zero switchover times is trivial, namely, a fixed point; in particular, the limit cycle for this system is the origin.}
\eqref{HDSgeneral} if (and only if) $\rho < 1$.
\end{proof}

As a consequence of Lemmas \ref{lem:LongRunCost} and \ref{lem:PRC}, we also have the following corollary, which in turn, implies the statement of Theorem \ref{corSBPRopt}.

\begin{corollary} \label{cor:LongRunCostSBPR}
Let $q_e^L$ be an $L$-cycle PE-candidate with ratios $\mathbf r$ in \eqref{r_i}.
Then, under SB-PR with parameters $(L, \mathbf r)$, it holds that
\begin{equation*}
\lim_{t\tinf} {1 \over t} \int_{0}^{t} \psi(q(u)) du = {1 \over \tau_L}\int_{0}^{\tau_L} \psi(q_e^L(u)) du,
\end{equation*}
for any solution $q$ to the HDS \eqref{HDSgeneral}.
\end{corollary}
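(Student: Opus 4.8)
The plan is to combine the two preceding lemmas directly. By Lemma~\ref{lem:PRC}, under SB-PR with parameters $(L,\mathbf r)$ the PE-candidate $q_e^L$ is a global limit cycle for the HDS~\eqref{HDSgeneral}; in particular, SB-PR belongs to $\Phi$ (it satisfies requirement (i), since the service function $\phi_i$ in Definition~\ref{def:SB-PR} makes the right-hand side of the state equations well-defined and produces a unique solution for every initial condition $\gamma\in\RR_+^K$, and requirement (ii), since every solution converges to the limit cycle $q_e^L$). Hence, for any solution $q$ to the HDS under SB-PR with initial condition $\gamma$, we may apply Lemma~\ref{lem:LongRunCost} with $q^\gamma_\phi = q$: it yields that the long-run average cost equals the average of $\psi$ over the limit cycle to which $q$ converges, namely $q_e^L$, over its cycle length.

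The one point requiring care is that Lemma~\ref{lem:LongRunCost} is stated with ``the limit cycle $q_e^\gamma$ to which $q^\gamma_\phi$ converges,'' so I would first note that this limit cycle is independent of $\gamma$ under SB-PR: by Lemma~\ref{lem:PRC} every trajectory converges to the \emph{same} global limit cycle $q_e^L$, and by Lemma~\ref{lem:bijective} this $q_e^L$ is precisely the PE-candidate associated with the parameters $(L,\mathbf r)$, with cycle length $\tau_e^\gamma = \tau_L$ given by~\eqref{eq:57}. Therefore
\begin{equation*}
\lim_{t\tinf}{1\over t}\int_0^t \psi(q(u))\,du = {1\over \tau_e^\gamma}\int_0^{\tau_e^\gamma}\psi(q_e^\gamma(u))\,du = {1\over \tau_L}\int_0^{\tau_L}\psi(q_e^L(u))\,du,
\end{equation*}
which is the claimed identity.

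There is essentially no genuine obstacle here, since all the work has been done in Lemmas~\ref{lem:LongRunCost} and~\ref{lem:PRC}; the corollary is a bookkeeping step. The only thing to be explicit about is the verification that SB-PR $\in\Phi$, so that Lemma~\ref{lem:LongRunCost} applies --- and this is immediate from Lemma~\ref{lem:PRC} (global convergence gives (ii)) together with the affine, positively invariant structure of the one-cycle map $\Gamma'$ established in the proof of Lemma~\ref{lem:PRC} (uniqueness of solutions for each $\gamma$ gives (i)). I would also remark, for completeness, that the averaging in Lemma~\ref{lem:LongRunCost} uses continuity of $\psi$ and the fact that $q$ is eventually arbitrarily close (in the sup-norm over a cycle, in the sense of Definition~\ref{defConvPE}) to a periodic trajectory, so the Cesàro average of $\psi(q)$ converges to the cycle average of $\psi(q_e^L)$; but this is exactly what Lemma~\ref{lem:LongRunCost} already asserts, so no re-proof is needed. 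Finally, I would note that this corollary, applied with $(L,\mathbf r)=(L_*,\mathbf r_*)$ and with $(L,\mathbf r)=(L_\mathcal N,\mathbf r_\mathcal N)$ respectively, together with the optimality of the PE-candidates $q_*$ and $q_\mathcal N$ established via~\eqref{optPE} and~\eqref{eq:FluidOpt2}, yields Theorem~\ref{corSBPRopt}.
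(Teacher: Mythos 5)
Your proposal is correct and matches the paper's intended argument: the paper simply states that the corollary is ``a consequence of Lemmas~\ref{lem:LongRunCost} and~\ref{lem:PRC}'' without spelling out the details, and you have filled those in faithfully (global limit cycle from Lemma~\ref{lem:PRC} $\Rightarrow$ SB-PR $\in\Phi$ $\Rightarrow$ apply Lemma~\ref{lem:LongRunCost}, with the limit cycle and cycle length independent of $\gamma$). The explicit check that SB-PR satisfies the two defining conditions of $\Phi$ is a useful touch that the paper leaves implicit.
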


\begin{proof}[Proof of Theorem \ref{corSBPRopt}]
The proof follows immediately from Corollary \ref{cor:LongRunCostSBPR} by taking the SB-PR control parameters to be
$(L_*, \mathbf r_*)$ for the FCP, or $(L_\mathcal N, \mathbf r_\mathcal N)$ for the RFCP.
\end{proof}

\subsection{Proofs of Lemmas \ref{lem:LongRunCost}, \ref{lem:ExistenceOfSolution} and \ref{lem:bijective}} \label{secProofsThm&Lem}

\begin{proof}[Proof of Lemma \ref{lem:LongRunCost}]
	Let $L_e$ denote the number of table cycles contained in the period of the limit cycle $q_{e}^\gamma$. Let $v^{(m-1)}$ denote the beginning epoch of the $((m-1)L_e +1)$th table cycle, $m \geq 1$.
	Define $\tilde T^{(m)} := v^{(m)} - v^{(m-1)}$. By construction,
	$\tilde T^{(m)}$ contains exactly $L_e$ table cycles.
	Since $q_{e}^\gamma$ is the limit cycle for $q_\phi^\gamma$,
	it follows that	for any fixed $\ep > 0$, there exists $N_\ep \ge 1$, such that, for all $m \ge N_\ep$,
	\bequ \label{bddPE}
	\| q_\phi^\gamma(v^{(m-1)} + \cdot) - q_{e}^\gamma(\cdot)\|_t < \ep \qforallq t > 0 , \quad  |\tilde T^{(m)} - \tau_e^\gamma| < \ep ,
	\eeq
	and
	\begin{equation*}
	\left|\int_{v^{(m-1)}}^{v^{(m)}} q_\phi^\gamma(s) ds - \int_{0}^{\tau_e^\gamma} q_{e}^\gamma(s) ds \right| < \ep .
	\end{equation*}
	Since the PE $q_e^\gamma$ is bounded (componentwise), \eqref{bddPE} implies that $q_\phi^\gamma$ is also bounded.
	Due to the continuity of $\psi$, $q_e^\gamma$ and $q_\phi^\gamma$, the composites $\psi \circ q_e^\gamma$ and $\psi \circ q_\phi^\gamma$
	are uniformly continuous over any compact time interval.
	It follows that for any $\ep >0$, there exists $M_\ep \ge N_\ep$, such that
	\bequ \label{bddIntegral}
	\left|\int_{v^{(m-1)}}^{v^{(m)}} \psi(q_\phi^\gamma(s)) ds - \int_{0}^{\tau_e^\gamma} \psi(q_e^\gamma(s)) ds \right| < \ep, \qforallq m \ge M_\ep.
	\eeq
	Let $M(t) := \max\{m \ge 1 : v^{(m)} \le t\}$. Then
	\bes
	{1 \over t} \int_{0}^{t} \psi(q_\phi^\gamma(s)) ds = {1 \over t} \sum_{m=1}^{M(t)} \int_{v^{(m-1)}}^{v^{(m)}} \psi(q_\phi^\gamma(s)) ds + {1 \over t} \int_{v^{(M(t))}}^t \psi(q_\phi^\gamma(s)) ds.
	\ees
	Since $0 \le t - v^{(M(t))} \le \tilde T^{(M(t)+1)}$ and $\tilde T^{(M(t)+1)}$ is bounded by virtue of \eqref{bddPE},
	the second term on the right-hand side of the equality above converges to $0$ as $t\tinf$. Now, for all $t$ large enough, it holds that $M(t) > M_\ep$,
	so that
	\bes
	{1 \over t} \sum_{m=1}^{M(t)} \int_{v^{(m-1)}}^{v^{(m)}} \psi(q_\phi^\gamma(s)) ds =
	{1 \over t} \sum_{m=1}^{M_\ep - 1} \int_{v^{(m-1)}}^{v^{(m)}} \psi(q_\phi^\gamma(s)) ds + {1 \over t} \sum_{m=M_\ep}^{M(t)} \int_{v^{(m-1)}}^{v^{(m)}} \psi(q_\phi^\gamma(s)) ds.
	\ees
	For fixed $\ep > 0$, $M_\ep$ is fixed, so that the first term in the right-hand side of the equality converges to $0$ as $t\tinf$.
	Applying \eqref{bddIntegral}
	for the second term gives that for $t$ large enough, we get that
	\begin{equation} \label{eq:1}
	\begin{split}
	{1 \over t} \sum_{m=M_\ep}^{M(t)} \int_{v^{(m-1)}}^{v^{(m)}} \psi(q_\phi^\gamma(s)) ds
	& \le
	{M(t) \over t} {1 \over M(t)} \sum_{m=M_\ep}^{M(t)} \left(\int_0^{\tau_e^\gamma} \psi(q_e^\gamma(s)) ds + \ep \right) \\
	& = {M(t) \over t} {M(t) - M_\ep \over M(t)} \left(\int_0^{\tau_e^\gamma} \psi(q_e^\gamma(s)) ds + \ep \right) \\
	& \leq \of{\frac{1}{\tau_e^\gamma-\ep} + o(1) }  \left(\int_0^{\tau_e^\gamma} \psi(q_e^\gamma(s)) ds + \ep \right) \\
	& \ra {1 \over \tau_e^\gamma - \ep}  \of{\int_0^{\tau_e^\gamma} \psi(q_e^\gamma(s)) ds + \ep} \qasq t\tinf.
	\end{split}
	\end{equation}
	In the second inequality above, we have used the fact that
	\bes
	\bsplit
	{t \over M(t)} & = {1 \over M(t)} \left(\sum_{m=1}^{M_\ep-1} \tilde T^{(m)} + \sum_{m=M_\ep}^{M(t)}  \tilde T^{(m)} + \of{t-v^{(M(t))}} \right) \\
	& = {1 \over M(t)} \sum_{m=M_\ep}^{M(t)} \tilde  T^{(m)} + o(1) \\
	& \ge {M(t) - M_\ep \over M(t)} (\tau_e^\gamma - \ep) + o(1) \ra \tau_e^\gamma - \ep \qasq t\tinf.
\end{split}
\ees
It follows from \eqref{eq:1} that
\begin{equation*}
\begin{split}
\limsup_{t \rightarrow \infty} {1 \over t} \sum_{m=M_\ep}^{M(t)} \int_{v^{(m-1)}}^{v^{(m)}} \psi(q_\phi^\gamma(s)) ds
& \le
{1 \over \tau_e^\gamma - \ep}  \of{\int_0^{\tau_e^\gamma} \psi(q_e^\gamma(s)) ds + \ep} .
\end{split}
\end{equation*}
We can similarly show that
\bes
\liminf_{t\tinf} {1 \over t} \sum_{m=M_\ep}^{M(t)} \int_{v^{(m-1)}}^{v^{(m)}} \psi(q_\phi^\gamma(s)) ds \ge {1 \over \tau_e^\gamma + \ep} \of{\int_0^{\tau_e^\gamma} \psi(q_e^\gamma(s)) ds - \ep},
\ees
and so the statement follows by taking $\ep \ra 0$.
\end{proof}

\begin{proof}[Proof of Lemma \ref{lem:ExistenceOfSolution}]
Let $e_{i,j}$ denote the time elapsed between the departure epoch of stage $i$ and the polling epoch of stage $j$ in $q^L_e$, $i,j \in \mathcal{I}^L$,
namely,
\begin{equation*} \label{eq:V}
e_{i,j} :=
\begin{cases}
s_{i} + \sum_{\ell = i + 1}^{j-1} (s_{\ell} + b_{\ell})  \quad &\text{if } i < j \\
s_{i} + \sum_{\ell = i + 1}^{IL} (s_{\ell} + b_{\ell}) + \sum_{\ell = 1}^{i-1} (s_{\ell} + b_{\ell})  \quad &\text{if } i \geq j ,
\end{cases}
\end{equation*}
with $\sum_{\ell = \ell_1}^{\ell_2} (s_{\ell} + b_{\ell}) := 0$ for $\ell_1 > \ell_2$.
Then an $L$-cycle PE-candidate $q_e^L$ necessarily satisfies the following systems of equations
\begin{equation}  \label{eq:PRCRecursion}
q^L_{k} (a_{k_j}) (1- r_{ k_j})  + \lambda_k e_{ k_j, k_{j+1}} = q^L_{k} (a_{ k_{j+1}}) ,
\quad j =1,..., dim(\mathbf k), \,\, k \in \mathcal{K},
\end{equation}
where $ k_{dim(\mathbf k) +1 } := k_1$.
Since the $L$-cycle PE-candidate parameterized by $\mathbf r$ is unique by virtue of Lemma \ref{lem:bijective},
the linear system \eqref{eq:PRCRecursion} admits a unique solution.
Hence, solving \eqref{eq:PRCRecursion} at all possible value of $\mathbf r \in \mathcal{R}$ for the corresponding PE
gives the entire constraint set of \eqref{eq:FluidOpt2}, because for given $(L, \mathbf r)$, $q_e^L$ is determined
by the solution to \eqref{eq:PRCRecursion}, $(q^L_{e,k} (a_{ k_j}),  j =1,..., dim(\mathbf k), k \in \mathcal{K})$.
Thus, \eqref{eq:FluidOpt2} can be reformulated equivalently as follows.
\begin{equation*} \label{FCPrelax2}
\begin{split}
\min_{\mathbf r \in \mathcal{R}} \quad & \frac{1}{\tau_L} \int_{0}^{\tau_L} \psi(q_e^L(u)) \, du \\
s.t. \quad   & q^L_{e,k} (a_{ k_j}) (1- r_{ k_j})  + \lambda_k e_{ k_j,  k_{j+1}} = q^L_{e,k} (a_{ k_{j+1}}) ,
\quad  j =1,..., dim(\mathbf k), \, k \in \mathcal{K} \\
&q_e^L \text{ is determined by } \of{q^L_{e,k} (a_{ k_j}),  j =1,..., dim(\mathbf k), k \in \mathcal{K}}.
\end{split}
\end{equation*}
Now, as was explained in Section \ref{secFCPformal}, each queue in an optimal PE-candidate must be emptied at least once within a server cycle,
and so the vector $\mathbf r$ corresponding to an optimal PE-candidate is an element of the set
\begin{equation*}
\mathcal{R}' := \offf{\mathbf r \in  [0,1]^{IL} : \sum_{ \{i \in \mathcal{I}^L: p(i) = k \} } r_i \geq 1 \quad \text{for all } k \in \mathcal{K} } .
\end{equation*}
Note that $\mathcal R'$ is a compact subset of the (non-compact) set $\mathcal R$ in \eqref{eq:r_condition_of_sum}.

Finally, since \eqref{eq:PRCRecursion} is a system of linear equations for a given $\mathbf r$, its unique solution
$(q^L_{e,k} (a_{ k_j}),  j =1,..., dim(\mathbf k), k \in \mathcal{K})$, is continuous in $\mathbf r$.
It follows that, for a given $\ep >0$, there exists a $\delta >0$, such that for all $\mathbf r_1, \mathbf r_2 \in \mathcal{R}'$ and their corresponding PE-candidates $q^{L,(1)}_{e}, q^{L,(2)}_{e}$, if $||\mathbf r_1 - \mathbf r_2|| < \delta$, then $|\psi(q^{L,(1)}_{e}(u)) - \psi(q^{L,(2)}_{e}(u))| < \ep$
for all $u \in [0,\tau_L)$, so that
\bes
\left| \frac{1}{\tau_L} \int_{0}^{\tau_L} \psi(q^{L,(1)}_{e}(u))du - \frac{1}{\tau_L} \int_{0}^{\tau_L} \psi(q^{L,(2)}_{e}(u))du \right| < \ep.
\ees
Thus, we established an equivalent formulation for problem \eqref{eq:FluidOpt2},
in which the objective function is continuous over the compact constraint set $\mathcal{R}'$.
It follows from Weierstrass theorem that a global minimum exists.
\end{proof}

\begin{proof}[Proof of Lemma \ref{lem:bijective}]
It follows from the proof of Lemma \ref{lem:PRC} that under SB-PR with parameters $(L, \mathbf r) \in \NN \times \mathcal R$,
the HDS converges to a global limit cycle. Thus, an $L$-cycle PE $q_e^L$ that satisfies \eqref{eq:r_condition_of_sum} exists.
Moreover, this PE is a global limit cycle, and is therefore the unique PE characterized via $(L, \mathbf r)$.
The statement of the lemma follows, because a PE is a PE-candidate by definition.
\end{proof}


\section{Translating SB-PR to the Stochastic System} \label{secControlsForTheStochasticSystem}

As discussed in Section \ref{secIntro}, we translate
SB-PR with control parameters $(L,\mathbf r)$ in the deterministic system to the binomial-exhaustive policy with the same control parameters in the stochastic system.
To show that the binomial-exhaustive policy with the optimal fluid-control parameters
is asymptotically optimal, 
we first establish general results for admissible policies.
Recall that, for a control $\pi$, $\tilde Q_\pi$ is the embedded process defined via \eqref{Q-DTMC}.
The proof of the following lemma follows from the proof of \cite[Proposition 1]{fricker1994monotonicity}, and is thus omitted.

\begin{lemma} \label{lmMarkovControl}
$\tilde Q_{\pi}$ is a homogeneous, aperiodic DTMC for any admissible control $\pi$.
\end{lemma}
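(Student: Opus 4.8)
The plan is to verify directly that the embedded process $\tilde Q_\pi := \{\tilde Q_\pi(m) : m \ge 0\}$ inherits the Markov property from the structure of the admissible control, and then to isolate aperiodicity as a separate elementary observation. First I would fix an admissible control $\pi$ and recall that $\tilde Q_\pi(m) = Q_\pi(U^{(m)})$, the state of the queue at the start of the $(m+1)$st server cycle. The key is to track how the queue evolves over a single server cycle as a function of $\tilde Q_\pi(m)$ and a batch of ``fresh'' randomness that is independent of the past. Over the $m$th server cycle the queue changes through (a) Poisson arrivals to each buffer during the cycle, (b) the service completions at each stage $i \in \mathcal I^L$, and (c) the switchover times $V_i^{(m)}$. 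By the memorylessness of the Poisson arrival processes, the arrivals accumulated after time $U^{(m)}$ are independent of $\mathcal F_{U^{(m)}}$; by Definition \ref{def:MarkovPolicy}(ii), conditional on $Q(A_i^{(m)})$, the number of customers served during the busy period at stage $i$ is independent of $\mathcal F_{A_i^{(m)}}$; and the switchover times are independent of everything else by assumption. Chaining these stage-by-stage (the polling epoch $A_i^{(m)}$ and the queue length $Q(A_i^{(m)})$ are measurable functions of $\tilde Q_\pi(m)$ together with the fresh randomness accumulated so far within the cycle, by non-idling and non-anticipativeness, Definition \ref{def:MarkovPolicy}(i) and (iii)), I obtain a measurable map $F$ and an i.i.d.\ (across $m$) sequence of driving variables $\xi_m$ — built from the increments of the Poisson processes, the service-count variables, and the switchover times over the $m$th cycle — such that $\tilde Q_\pi(m+1) = F(\tilde Q_\pi(m), \xi_{m+1})$ with $\xi_{m+1}$ independent of $\mathcal F_{U^{(m)}}$ and hence of $(\tilde Q_\pi(0), \dots, \tilde Q_\pi(m))$. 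This is precisely the recursion that makes $\tilde Q_\pi$ a time-homogeneous DTMC on $\ZZ_+^K$ (the map $F$ does not depend on $m$ because the arrival, service, and switchover distributions do not).

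For aperiodicity, I would argue that the state $\boldsymbol 0$ (or, more robustly, any recurrent state) has period $1$. The simplest route: with strictly positive probability during a server cycle, in every buffer the number of arrivals during that cycle is zero (the arrival process is Poisson, so ``no arrival over a fixed-length window'' has positive probability — and the cycle length is a.s.\ finite, so conditioning on a bounded realization of it preserves positivity) and, simultaneously, the server serves everyone it is scheduled to serve, so the queue returns to $\boldsymbol 0$ from $\boldsymbol 0$ in one step. Hence the self-loop probability at $\boldsymbol 0$ is strictly positive, which forces $\gcd$ of the return times to $\boldsymbol 0$ to be $1$; by irreducibility on the communicating class this gives aperiodicity of the chain. (If one prefers not to single out $\boldsymbol 0$, the same self-loop argument works at any state reachable with all-zero arrivals during a cycle.)

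The only genuinely delicate point — and the part I expect to require care rather than cleverness — is the bookkeeping in the first paragraph: making precise that each polling epoch $A_i^{(m)}$, each within-cycle queue length $Q(A_i^{(m)})$, and the stage sequence itself (which may be an $L$-cycle augmented table) are measurable functions of $\tilde Q_\pi(m)$ and the fresh randomness generated strictly after $U^{(m)}$, so that the conditional-independence hypotheses in Definition \ref{def:MarkovPolicy}(ii) compose cleanly across the $IL$ stages without any hidden dependence on the pre-$U^{(m)}$ filtration leaking in through the control. Since this is exactly the content of the proof of \cite[Proposition 1]{fricker1994monotonicity} — which establishes the Markov property for a strictly larger class of controls, all of which are admissible in our sense — I would simply invoke that argument and omit the routine details, noting only that aperiodicity follows from the positive self-loop probability at a recurrent state as above.
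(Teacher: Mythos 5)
Your construction of the one-step update map in the first paragraph — expressing $\tilde Q_\pi(m+1)=F(\tilde Q_\pi(m),\xi_{m+1})$ with $\xi_{m+1}$ built from the within-cycle arrival increments, the service-count variables from Definition~\ref{def:MarkovPolicy}(ii), and the switchover times — is exactly the substance of the Fricker--Ja\"ibi argument to which the paper defers, so the Markov and time-homogeneity claims are on solid ground. However, two points need correction. First, you have the class inclusion \emph{reversed}: the paper explicitly remarks that the controls of \cite{fricker1994monotonicity} satisfy all three conditions of Definition~\ref{def:MarkovPolicy} \emph{plus} a stochastic-monotonicity property, so that class is \emph{strictly smaller} than the admissible class here, not strictly larger. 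This matters: one cannot simply cite their proposition to cover all our controls. One must instead cite their \emph{proof} and observe that stochastic monotonicity is never used to establish the Markov property — which is why the paper writes ``follows from the proof of \cite[Proposition 1]{fricker1994monotonicity}.'' Your own direct argument makes the invocation harmless in substance, but the justification as stated would be circular. Second, the self-loop argument for aperiodicity has a genuine gap for controls that exercise the paper's waiting option at empty buffers. Starting from $\boldsymbol 0$, the events ``no arrivals to any buffer during the cycle'' and ``the cycle completes'' are \emph{disjoint} under such a control: if the server waits for work at an empty buffer and no arrivals come, the cycle never ends, so your positive-probability scenario has probability zero precisely when it is needed. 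The argument works for non-waiting controls, but the lemma is asserted for every admissible $\pi$, so one would need either to exhibit a self-loop via a more carefully chosen arrival pattern (allowing exactly enough arrivals to unblock each waiting stage and then be drained), or to argue aperiodicity at a different recurrent state where the dynamics avoid the waiting branch.
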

We remark that the controls considered in \cite{fricker1994monotonicity} are assumed to satisfy a certain stochastic monotonicity property,
in addition to the conditions in our definition of admissible controls. Thus, the set of controls in this reference is smaller than ours.
However, that extra stochastic-monotonicity property does not determine the Markov property of the embedded process $\tilde Q_\pi$;
see the proof of \cite[Proposition 1]{fricker1994monotonicity}.

\begin{definition}
We say that a control $\pi$ is {\em stable} if $\tilde{Q}_\pi$ is absorbed in a positive recurrent class, regardless of its initial distribution.
\end{definition}

It follows from Lemma \ref{lmMarkovControl} that, for a stable control $\pi$,
\begin{equation*} \label{statDTMC}
Q_{\pi}(m) \Ra \tilde Q_{\pi}(\infty) \qasq m\tinf,
\end{equation*}
where $\tilde Q_{\pi}(\infty)$ is a random variable distributed according to a stationary distribution of the DTMC $\tilde Q_\pi$.
By flow-balance arguments, see, e.g., \cite{boon2011applications},
the length of a stationary server-cycle over an $L$-cycle augmented table $T_L$ has mean
\begin{equation*} \label{eq:ExpectedT}
\E\off{T_L} = L s/(1-\rho), \qforq L \ge 1.
\end{equation*}

Clearly, only stable controls are relevant for our (asymptotic) control-optimization problem.
However, we note that the stability region of a given control, namely, the set of values of the service and arrival rates for which the system is stable,
can be hard to characterize; see \cite{takagi1988queuing}.
The most general characterization of the stability region we are aware of was developed
in \cite{fricker1994monotonicity} (under the aforementioned stochastic-monotonicity property).

\subsection{Sequences of Admissible Controls}

We say that a sequence of controls $\boldsymbol \pi = \{\pi^n : n \ge 1\}$ is admissible if $\pi^n$ is an admissible
policy for each $n \ge 1$, and denote the family of all such sequences by $\Pi$.
For $n \ge 1$ and $U^{(0), n} := 0$, let $U^{(m),n}$ denote the beginning of the $(m+1)$st server cycle of the $n$th system, $m \ge 0$.
Then, for $\boldsymbol \pi \in \Pi$,
\begin{equation*}
\tilde Q_{\pi^n}^n(m) := \bar Q_{\pi^n}^n(\bar U^{(m),n}) , \quad m \geq 0,
\end{equation*}
is a DTMC for all $n \ge 1$ by Lemma \ref{lmMarkovControl}.
If, in addition, the control is stable for each $n \ge 1$, then there exists a stationary distribution
for each of the DTMCs in the sequence, and we say that $\boldsymbol \pi$ is stable.

For the queue process in stationarity, the server-cycle length $T^n_L$ (when the control is designed for an $L$-cycle augmented table) has mean
$n s L/(1-\rho)$, and for $\bar T^n_L := T^n_L/n$,
\begin{equation} \label{eq:ExpeCycle}
\E\off{\bar T^n_L} = s L / (1-\rho),
\end{equation}
which is equal to the equilibrium cycle length $\tau_L$ in any $L$-cycle PE of the fluid model.

In order for a sequence of controls $\boldsymbol \pi \in \Pi$ to be asymptotically optimal, it must be stable and
the sequence of corresponding stationary distributions $\{\tilde Q_{\pi^n}^n  (\infty) : n \geq 1\}$ must be tight in $\RR_+^K$.
However, we remark at the outset that, even if $\boldsymbol \pi$ is stable and $\{\tilde Q_{\pi^n}^n  (\infty) : n \geq 1\}$ is tight,
there is no guarantee that there exists a global limit cycle for any of the resulting fluid limits, because the limits as $n\tinf$ and as $t\tinf$ need not commute.

\subsubsection{$L$-Cyclic Controls for the Restricted Problem}  \label{secPiL}

As is the case for the unrestricted problem, for a sequence of controls to be asymptotically optimal with respect
to the restricted optimal-control problem, that sequence must be stable, and the corresponding sequence of stationary distributions must be tight.
The difference between the two versions of the optimal-control problem is that,
in the restricted problem, we have fixed values of table cycles $L$ which we target.

Let $q_e$ denote a PE for a fluid limit when the sequence of controls is a stable sequence $\boldsymbol \pi$, and when
$\tilde Q^n_{\pi^n}(0) \deq \tilde Q^n_{\pi^n}(\infty)$. The fact that a fluid limit for such a sequence exists follows from Lemma \ref{lem:Tightness}
because the sequence of initial distribution is stationary, and is assumed to be tight, for the reason described above.
From the asymptotic perspective, there is clearly no point in considering $L$-cycle controls which give rise in the limit to PE that have a period that
does not divide $\tau_L$.
(We always allow the period of an $L$-cycle PE to be smaller than the cycle length.)
Thus, when solving the restricted problem over a set $\mathcal N \subset \NN$, we should only consider sequences of admissible controls
that give rise to $L$-cycle PE for $L \in \mathcal N$, which motivates considering the following family of controls.

\begin{definition} \label{def:LCyclic}
A sequence of admissible controls $\boldsymbol \pi \in \Pi$ is said to be $L$-cyclic if
any fluid limit of $\{\bar Q^n_{\pi^n} : n \ge 1\}$ with initial condition $\bar Q_{\pi^n}^n(0) \stackrel{d}{=} \tilde Q_{\pi^n}^n(\infty)$, $n \ge 1$,
is an $L$-cycle PE.
\end{definition}
\noindent We denote the subset of $L$-cyclic controls by $\Pi^L$.

\subsection{SB-PR and the Corresponding Binomial-Exhaustive Policy}\label{SecTranslatingPRC}

A FWLLN for the binomial-exhaustive policy follows easily from Proposition \ref{thm:FWLLN}, as the next corollary shows.

\begin{corollary}[FWLLN under binomial-exhaustive] \label{cor:BinAdmissible}
Let $\{Q^n : n \ge 1\}$ denote a sequence of queues where, for each $n \ge 1$, the system operates
under the binomial-exhaustive policy with the same parameters $(L, \mathbf r) \in \NN \times \mathcal R$.
If $\bar Q^n(0) \Arr q(0)$ in $\R_+^K$, then $\bar Q^n \Rightarrow q$ in $D^K$,
where $q$ is the fluid queue process under SB-PR with parameters $(L, \mathbf r)$ and initial condition $q(0)$.
\end{corollary}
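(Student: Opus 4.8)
The plan is to deduce the corollary directly from the FWLLN in Proposition~\ref{thm:FWLLN}. By that proposition it suffices to verify that, under the binomial-exhaustive policy with fixed parameters $(L,\mathbf r)\in\NN\times\mathcal R$, the fluid-scaled departure epochs satisfy $\bar D^{(m),n}_i \Ra d^{(m)}_i$ in $\RR_+^K$ for every $m\ge 1$ and $i\in\mathcal I^L$, where the $d^{(m)}_i$ are the departure epochs of the (unique) HDS solution under SB-PR with the same parameters and initial condition $q(0)$. Since all switchover times converge by Assumption~\ref{assum1} and the queues evolve at deterministic linear rates between switching epochs (cf.\ the proof of Lemma~\ref{lem:Tightness}), both the polling epochs $\bar A^{(m),n}_i$ and the departure epochs $\bar D^{(m),n}_i$ are determined, stage by stage, by the busy times $\bar B^{(m),n}_i := (D^{(m),n}_i-A^{(m),n}_i)/n$. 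Thus the argument reduces to identifying the limit of these busy times, which I would do by induction over $(m,i)$ ordered along the augmented table, using $C$-tightness of $\{\bar Q^n\}$ from Lemma~\ref{lem:Tightness} so that every subsequence has a further convergent one.

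The key step is the inductive computation of a single busy time. Fix a stage $i$ in cycle $m$ and suppose inductively that $\bar A^{(m),n}_i \Ra a^{(m)}_i$ and $\bar Q^n \Ra q$ up to time $a^{(m)}_i$, so that $\bar Q^n_{p(i)}(\bar A^{(m),n}_i)\Ra q_{p(i)}(a^{(m)}_i)=:\gamma$. Write $N^n := Q^n_{p(i)}(A^{(m),n}_i)$ for the number of class-$p(i)$ customers present at the polling epoch. By Definition~\ref{def:Binomial}, during the busy time at this stage the server works exactly $Y^n:=Y^{(m)}_i(N^n,r_i)$ independent $M/G/1$ busy periods with arrival rate $\lambda_{p(i)}$ and service rate $\mu_{p(i)}$, each of mean $1/(\mu_{p(i)}-\lambda_{p(i)})$. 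Combining the law of large numbers for the binomial, $Y^n/N^n\to r_i$ on $\{N^n\to\infty\}$, with a random-sum (renewal-reward) argument for the sum of the i.i.d.\ busy periods gives
\begin{equation*}
\bar B^{(m),n}_i = \frac{1}{n}\sum_{j=1}^{Y^n}\xi_j \;\Ra\; \frac{r_i\,\gamma}{\mu_{p(i)}-\lambda_{p(i)}} = \phi_i\big(q(a^{(m)}_i)\big),
\end{equation*}
which is exactly the SB-PR service function of Definition~\ref{def:SB-PR}. When $\gamma=0$ one has $N^n=o_p(n)$, hence $\bar B^{(m),n}_i\Ra 0$, again matching $\phi_i$; the event that the server finds an empty buffer upon polling is asymptotically null and does not affect the limit. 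Adding the converging switchover time $\bar V^n_i\Ra s_i$ advances the induction to the next stage, and since the number of switchings in any compact fluid interval is $O_p(1)$, after finitely many steps all departure epochs in $[0,T)$ converge, for every $T>0$.

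With departure-epoch convergence in hand, Proposition~\ref{thm:FWLLN} yields $\bar Q^n\Ra q$ in $D^K$, where $q$ satisfies \eqref{fluid implicit} with the limiting switching epochs. To finish, I would identify $q$ with the SB-PR fluid queue: the limiting busy times satisfy $b^{(m)}_i=\phi_i(q(a^{(m)}_i))$ with the SB-PR service function, so $q$ solves the HDS~\eqref{HDSgeneral} under SB-PR with parameters $(L,\mathbf r)$ and initial condition $q(0)$, and by Lemma~\ref{lem:PRC} (global stability of SB-PR) this solution is unique; hence $q$ is precisely the SB-PR fluid queue. I expect the main obstacle to be the bookkeeping that makes the induction rigorous across switching epochs — in particular, obtaining \emph{joint} convergence of $(\bar Q^n,\ \text{switching epochs})$ so that the continuous-mapping step at each switching epoch is legitimate, handling degenerate stages with $s_i=0$ and polling epochs at which a queue is (nearly) empty, and ruling out the asymptotically null possibility of the server idling at a buffer. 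The probabilistic core — the random-sum law of large numbers for the binomially many $M/G/1$ busy periods, set up via the conditioning on $Q^n_{p(i)}(A^{(m),n}_i)$ permitted by admissibility — is standard.
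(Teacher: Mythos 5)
Your proposal is correct and follows essentially the same route as the paper: verify the departure-epoch convergence hypothesis of Proposition~\ref{thm:FWLLN} by an induction over stages, computing each fluid-scaled busy time via a law of large numbers for the (binomially many) i.i.d.\ $M/G/1$ busy periods, with the switchover times handled by Assumption~\ref{assum1}; the only cosmetic difference is that you sum $Y^n$ busy periods whereas the paper sums $\sum_{\ell=1}^{N^n}\Theta^{(\ell)}_{p(i)}Y^{(\ell)}_i$, which is the same random variable. One small imprecision: the final identification of $q$ with the SB-PR trajectory does not need Lemma~\ref{lem:PRC} (which is about global limit cycles) — once the limiting switching epochs are deterministic, the HDS solution with given $q(0)$ is unique by construction, as the paper notes after Proposition~\ref{thm:FWLLN}.
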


\begin{proof}
We verify that the condition in Proposition \ref{thm:FWLLN} holds under SB-PR, namely,
$\bar D^{(m),n}_{i} \Arr d^{(m)}_{i}$ in $\RR_+$ as $n \arr \infty$, for all $m \geq 1$ and $i \in \mathcal{I}^L$.	
To this end, consider the first departure epoch, which is also the first busy time of the server at stage $1$ in the first server cycle, i.e.,
time $\bar D^{(1),n}_{1} = \bar B^{(1),n}_{1}$, for $n \ge 1$.
Under the binomial-exhaustive policy, all the arrivals to queue $k \in \mathcal K$ during the service time of a customer from that same queue are served as well,
and so the {\em total service time} of each served customer and all the arrivals during his service time is distributed like a busy period in an $M/G/1$
queue that has arrival rate $\lm_k$ and service rate $\mu_k$.

For each stage $i \in \mathcal I^L$ and the corresponding queue $p(i)$, denote by
$\Theta_{p(i)}^{(\ell)}$ the busy period ``generated'' by the service of the $\ell$th served customer in this queue.
Let $\{Y_i^{(\ell)} : \ell \ge 1\}$ be a sequence of i.i.d.\ Bernoulli r.v.'s with success probability $r_i$.
We use $Y_i$ and $\Theta_{p(i)}$ to denote corresponding generic random variables.
Then
\begin{equation*}
\bar B^{(1),n}_{1} = \frac{1}{n} \sum_{\ell =1}^{Q^n_{p(1)}(0)} \Theta_{p(1)}^{(\ell)} Y_{1}^{(\ell)}
\Arr q_{p(1)}(0) \E\off{\Theta_{p(1)} Y_1} \qasq n \tinf,
\end{equation*}
and due to the independence of $\Theta_{p(1)}$ and $Y_1$,
\begin{equation} \label{barB}
\bar B^{(1),n}_{1} = \bar D^{(1),n}_{1} \Arr q_{p(1)}(0)\E\off{\Theta_{p(1)} } \E\off{Y_1} = r_1 q_{p(1)}(0)/(\mu_{p(1)}-\lambda_{p(1)}) = d_1^{(1)},
\end{equation}
where the weak convergence holds as $n\tinf$.
Furthermore, the length of queue $p(1)$ at the end of the busy time is given by
\begin{equation*}
\begin{split}
\bar Q^n_{p(1)} (\bar D^{(1),n}_{1}) = \bar Q^n_{p(1)} \of{0} - \frac{1}{n} \sum_{\ell =1}^{Q^n_{p(1)}(0)}  Y_1^{(\ell)}
\Arr q_{p(1)}(0) - q_{p(1)}(0) r_1
= q_{p(1)} (d_1^{(1)})  \quad \text{as } n \arr \infty.
\end{split}
\end{equation*}
It follows from the FWLLN for the Poisson process and \eqref{barB} that, for all $k \neq p(1)$,
$$\bar Q^n_k (\bar D^{(1),n}_{1}) \Ra q_k (d_1^{(1)}) = q_k(0) + \lambda_k b_1^{(1)} \qasq n \tinf,$$
and that
\begin{equation*} \label{eq:2}
\bar Q (\bar D^{(1),n}_{1} + \bar V_1^{(1),n}) \Arr q (d_1^{(1)} + s_1)  \quad \text{as } n \arr \infty.
\end{equation*}
Continuing with the same line of arguments gives
$\bar D^{(m),n}_{i} \Arr d_i^{(m)}$ as $n \arr \infty$, for all $m \geq 1$, $i \in \mathcal{I}^L$, as required.
\end{proof}

The FWLLN under the binomial-exhaustive policy remains to hold if the condition that the initial queue converges
is replaced with the condition that the initial distribution of the queue
is equal to its stationary distribution at the beginning of a server cycle.
In this case, the resulting fluid limit is the global limit cycle (the unique PE) under the corresponding SB-PR control.
This result, stated formally in the following lemma, will be employed in the proofs of our main theorems.

\begin{lemma} [interchange of limits] \label{lem:interchange}
Let $\{Q^n : n \ge 1\}$ denote a sequence of queues where, for each $n \ge 1$, the system operates
under the binomial-exhaustive policy with the same parameters $(L, \mathbf r) \in \NN \times \mathcal R$.
Then for any real-valued, continuous, and bounded function $f$ on $\R^K_+$,
\begin{equation} \label{eqInterchange}
\lim_{m \arr \infty} \lim_{n \arr \infty} \E\off{f\of{\tilde Q^n(m)}} = \lim_{n \arr \infty} \lim_{m \arr \infty} \E\off{f\of{\tilde Q^n(m)}} = f\of{q_e(a_1)} ,
\end{equation}
where $q_e$ is the PE under SB-PR with parameters $(L, \mathbf r)$.
In particular, if $\bar Q^n(0) \deq \tilde Q^n(\infty)$ for all $n \ge 1$, then $\bar Q^n \Ra q_e$ in $D^K$ as $n\tinf$.
\end{lemma}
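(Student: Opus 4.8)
The plan is to establish the two iterated limits in \eqref{eqInterchange} separately, and then deduce the final weak-convergence statement. For the inner-then-outer limit $\lim_{m}\lim_{n}$: by Corollary \ref{cor:BinAdmissible}, whenever $\bar Q^n(0)\Ra q(0)$ the fluid-scaled queue converges to the SB-PR fluid trajectory $q$ started at $q(0)$, and in particular $\tilde Q^n(m)=\bar Q^n(\bar U^{(m),n})\Ra q(u^{(m)})$ because the server-cycle endpoints converge (this is part of what the proof of Corollary \ref{cor:BinAdmissible} gives, via convergence of all departure and polling epochs). Hence for each fixed $m$, $\lim_n \E[f(\tilde Q^n(m))]=\E[f(q(u^{(m)}))]$ for bounded continuous $f$. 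Then let $m\tinf$: by Lemma \ref{lem:PRC}, $q$ converges to the global limit cycle $q_e$ under SB-PR with parameters $(L,\mathbf r)$, in the sense of Definition \ref{defConvPE}; since $u^{(m)}$ are the server-cycle start epochs, $q(u^{(m)})\to q_e(u_e^{(0)})=q_e(a_1)$ (using $u_e^{(0)}=0$ and that stage $1$ is polled at the start of a server cycle, so $a_1=0$ up to the convention). Boundedness and continuity of $f$ then give $\lim_m\lim_n\E[f(\tilde Q^n(m))]=f(q_e(a_1))$. This direction does not even require stationarity of the initial condition; one just needs tightness of $\{\bar Q^n(0)\}$, which we may assume (or handle by an initial-condition argument, since Lemma \ref{lem:PRC} gives a \emph{global} limit cycle, so the limit is independent of $q(0)$).

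For the outer-then-inner limit $\lim_{n}\lim_{m}$: fix $n$. By \cite[Proposition 1]{fricker1994monotonicity}, the embedded DTMC $\tilde Q^n$ under the binomial-exhaustive policy is ergodic, so $\tilde Q^n(m)\Ra \tilde Q^n(\infty)$ as $m\tinf$, and thus $\lim_m \E[f(\tilde Q^n(m))]=\E[f(\tilde Q^n(\infty))]$ for bounded continuous $f$. It remains to show $\E[f(\tilde Q^n(\infty))]\to f(q_e(a_1))$ as $n\tinf$, i.e. that the \emph{stationary} fluid-scaled embedded queue converges weakly to the (deterministic) point $q_e(a_1)$. The natural route is: (a) show that $\{\tilde Q^n(\infty):n\ge1\}$ is tight in $\RR_+^K$; (b) take any subsequential weak limit $\mu^\infty$ and show it must be the point mass at $q_e(a_1)$. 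For (b), start the chain in stationarity, $\bar Q^n(0)\deq\tilde Q^n(\infty)$; along the subsequence, $\bar Q^n(0)\Ra \mu^\infty$, so by Corollary \ref{cor:BinAdmissible} the fluid limit is the SB-PR trajectory $q^{\gamma}$ started from the random $\gamma\sim\mu^\infty$. Stationarity of the prelimit forces $\tilde Q^n(1)\deq\tilde Q^n(\infty)$, hence in the limit $q^\gamma(u^{(1)})\deq\gamma$; that is, $\gamma$ is a fixed point (in distribution) of the one-server-cycle fluid map $\Gamma'$ from the proof of Lemma \ref{lem:PRC}. But $\Gamma'(q)=\mathcal A' q+\mathcal B'$ with $\varrho(\mathcal A')<1$, so $\Gamma'$ has a unique fixed point, which is exactly $q_e(a_1)$ (the limit-cycle value at the start of a server cycle); a distributional fixed point of a strict contraction is degenerate at that unique point. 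Therefore $\mu^\infty=\delta_{q_e(a_1)}$, and since every subsequential limit is the same, $\tilde Q^n(\infty)\Ra q_e(a_1)$, giving $\lim_n\lim_m\E[f(\tilde Q^n(m))]=f(q_e(a_1))$.

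The two iterated limits thus coincide and equal $f(q_e(a_1))$, proving \eqref{eqInterchange}. Finally, if $\bar Q^n(0)\deq\tilde Q^n(\infty)$ for all $n$, then by the tightness in step (a) and Lemma \ref{lem:Tightness} the sequence $\{\bar Q^n\}$ is $C$-tight, and by the argument just given every subsequential limit is the SB-PR trajectory started from $\delta_{q_e(a_1)}$, which by Lemma \ref{lem:PRC} (uniqueness of the global limit cycle) is precisely $q_e$; hence $\bar Q^n\Ra q_e$ in $D^K$.

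\textbf{Main obstacle.} The delicate step is (a)--(b) for the $\lim_n\lim_m$ direction: establishing tightness of the stationary distributions $\{\tilde Q^n(\infty)\}$ uniformly in $n$, and then rigorously pushing the distributional fixed-point identity through the weak limit. Tightness is where the real work lies — it amounts to a uniform-in-$n$ stability estimate for the binomial-exhaustive chain, and is closely tied to the moment conditions appearing elsewhere in the paper (Assumption \ref{assum2} and Theorem \ref{th:MomentCheck}); one expects to need a Lyapunov/drift argument for $\tilde Q^n$ with constants that do not degrade as $n\tinf$, exploiting that the fluid map $\Gamma'$ is a strict contraction with $n$-independent contraction factor $\varrho(\mathcal A')$. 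Once tightness is in hand, identifying the limit via the contraction fixed point is comparatively clean.
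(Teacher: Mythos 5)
Your decomposition into the two iterated limits is structurally sound, and your fixed-point identification for the $\lim_n\lim_m$ direction (a distributional fixed point of the affine contraction $\Gamma'$ must be degenerate at $q_e(a_1)$, since $\varrho(\mathcal A')<1$) is a clean alternative to the paper's contradiction argument via transience. However, you have left a genuine gap exactly at the step you flag as the ``main obstacle'': tightness of $\{\tilde Q^n(\infty):n\ge1\}$. You conjecture this needs a uniform-in-$n$ Lyapunov/drift estimate tied to the moment conditions of Assumption \ref{assum2}, but the paper's route is much more direct and requires no such machinery: Lemma \ref{LEM:ASYMPQ2}(i) gives the \emph{exact identity} $\E[\bar Q^n(\bar A_1^n)]=q_e(a_1)$ for every $n\ge1$ whenever the chain starts stationary (this is a flow-balance/Wald-type computation, not a bound). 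Since $\bar Q^n(0)=\bar Q^n(\bar A_1^n)$ by construction, Markov's inequality immediately yields tightness (indeed uniform integrability) of $\{\bar Q^n(0)\}=\{\tilde Q^n(\infty)\}$, with no moment assumptions beyond the basic ones needed for stability. Missing this identity is the substantive difference: it is what makes the lemma unconditional, whereas your argument as written would leave the lemma conditional on an unproven tightness hypothesis.

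Once tightness is granted, the paper identifies the subsequential limit not by the contraction fixed-point argument you use, but by a transience contradiction: if a weak limit $\bar Q(0)$ charged any set $E\not\ni q_e(a_1)$ with positive probability, then since every trajectory of the limiting HDS is absorbed into an arbitrarily small neighborhood of $q_e(a_1)$ at server-cycle endpoints (global limit cycle, Lemma \ref{lem:PRC}), $E$ would be transient for the limiting embedded chain, contradicting the stationarity inherited from the prelimit. Both identification arguments are valid; yours via $\Gamma'$ arguably localizes the role of the contraction more explicitly. But the overall proposal is incomplete without the moment-identity step, and the path you sketched toward tightness (Lyapunov drift under Assumption \ref{assum2}) would both overshoot and misattribute what is actually needed.
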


\begin{proof}
The key to the proof is the fact that
$\E\off{\bar Q^n(\bar A_1^n)} = q_e(a_1)$ 
for all $n \ge 1$. This fact, which is established in Lemma \ref{LEM:ASYMPQ2}, 
implies that
$\sup_n \E\off{\bar Q^n(\bar A_1^n)} < \infty.$
It follows from Markov's inequality that $\{\bar Q^n(\bar A_1^n): n \geq 1\}$ is UI, and thus tight in $\RR^K_+$.
Since $\bar Q^n(0) = \bar Q^n(\bar A_1^n)$ by definition, $\{\bar Q^n(0) : n \ge 1\}$ is tight.
Further, $\{\bar Q^n(U^{(m),n}) : m \ge 0\}$ is a stationary sequence, so that, since $A^n_1 = U^{(0), n}$, we have convergence
along subsequences $\bar Q^n(U^{(m),n_k}) \Ra \bar Q(0)$ as $k \tinf$, for all $m \ge 0$.
Note that, conditional on $\bar Q(0)$, the fluid limit $\bar Q$ is deterministic, and
converges to the global limit cycle $q_e$ as $t \tinf$, regardless of the realized value of $\bar Q(0)$.

Assume, in order to arrive at a contradiction, that there exists a set $E \subsetneq \RR^K$, with $q_e(a_1) \notin E$,
such that $\Prob{E} > 0$, where $\mathbb{P}$ denotes the probability distribution of $\bar Q(0)$.
Due to the convergence of $\bar Q(t)$ to $q_e$ as $t \tinf$,
there exists an $m_0$, such that $\|\bar Q(\bar U^{(m)}) - q_e(u^{(1)})\| < \ep$ w.p.1 for all $m \ge m_0$ and for any $\ep > 0$.
It follows that, for all $m$ large enough, $\bar Q(\bar U^{(m)}) \notin E$.
Since this holds for all the trajectories $\bar Q$ with $\bar Q(0) \in E$, it follows that $E$ is a set of transient states,
contradicting the stationarity of $\{\bar Q(\bar U^{(m)}) : m \ge 0\}$. Thus, $\Prob{E} = 0$, and in turn, $\bar Q(0) = q_e(a_1)$ w.p.1.
This latter equality holds for all converging subsequences of $\{\bar Q^n(0) : n \ge 1\}$, and so it holds for the sequence itself,
namely, $\bar Q^n(0) \Ra q_e(a_1)$ as $n\tinf$, implying \eqref{eqInterchange}.
This, together with the FWLLN in Corollary \ref{cor:BinAdmissible} when $\bar Q^n(0) \deq \bar Q^n(\infty)$, $n \ge 1$, implies that
$\bar Q^n \Ra q_e$ in $D^K$ as $n\tinf$.
\end{proof}

\section{Asymptotic Optimality of Binomial-Exhaustive} \label{secAsympOpt}
In this section we consider the global optimal-control problem, which is the subject of Theorem \ref{thmMain} and the corresponding FCP,
and the restricted optimal-control problem. 

\subsection{Asymptotic Optimality for the Global Problem}

Theorems \ref{thm:AsympLB} and \ref{thm:AsympOpt} below imply Theorem \ref{thmMain}.
Recall that $c_*$ is the optimal objective value of the FCP.

\begin{theorem} [asymptotic lower bound] \label{thm:AsympLB}
$ \liminf\limits_{n \arr \infty} \, \liminf\limits_{t \arr \infty} \, \bar C^n_{\pi^n}(t) \geq c_* $ w.p.1, for any $\boldsymbol \pi \in \Pi$.
\end{theorem}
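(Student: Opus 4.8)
The plan is to (a) reduce to stable sequences of controls whose stationary distributions are tight; (b) rewrite, for each fixed $n$, the long-run average cost of $\pi^n$ as a ratio of expectations over one stationary server cycle; (c) pass along a subsequence to a (possibly random) fluid limit of the queue started from stationarity; and (d) show that every such fluid limit is ``almost periodic,'' so that its long-run average cost is at least $c_*$. For (a): if, along some subsequence of $n$, the control $\pi^n$ is not stable ($\tilde Q^n_{\pi^n}$ is not positive recurrent) or $\{\tilde Q^n_{\pi^n}(\infty) : n\ge 1\}$ is not tight in $\RR^K_+$, then $\liminf_{t\tinf}\bar C^n_{\pi^n}(t)$ does not fall below $c_*$ along that subsequence --- indeed, it typically diverges, since $\psi$ is non-decreasing --- and the claimed bound holds trivially; so assume $\boldsymbol\pi$ is stable with tight stationary distributions. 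For (b): the long-run average cost does not depend on the initial condition, so we may take $\bar Q^n_{\pi^n}(0)\deq\tilde Q^n_{\pi^n}(\infty)$, making $\{\tilde Q^n_{\pi^n}(m) : m\ge 0\}$ stationary; the ergodic theorem for regenerative processes then gives, for each $n$,
\[
\lim_{t\tinf}\bar C^n_{\pi^n}(t)=\frac{\E\off{\bar\Psi^n_{\pi^n}}}{\E\off{\bar T^n}}\quad\text{w.p.1},\qquad \E\off{\bar T^n}=\frac{sL_n}{1-\rho}=\tau_{L_n},
\]
where $L_n$ is the number of table cycles per server cycle under $\pi^n$ (the second identity is \eqref{eq:ExpeCycle}); if $\E[\bar\Psi^n_{\pi^n}]=\infty$ there is nothing to prove.

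For (c), set $c:=\liminf_{n\tinf}\E[\bar\Psi^n_{\pi^n}]/\E[\bar T^n]$ and pick a subsequence realizing it. Refining the subsequence --- using tightness of $\{\tilde Q^n_{\pi^n}(\infty)\}$, the $C$-tightness from Lemma \ref{lem:Tightness}, the boundedness on compact fluid-time intervals of the (finitely many) fluid-scaled server-switching epochs, and the Skorokhod representation --- we may assume $L_n\equiv L$ and $\bar Q^n_{\pi^n}\to q$ a.s.\ uniformly on compacts, where $q$ is a solution of \eqref{fluid implicit} whose sequence of server cycles is stationary (in particular $q(u^{(m)})\deq q(0)$ for all $m\ge 0$). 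Since $\psi$ is continuous and non-negative, $\bar\Psi^n_{\pi^n}\to\int_0^{u^{(1)}}\psi(q(u))\,du$ a.s., so Fatou's lemma and $\E[\bar T^n]=\tau_L$ give $c\ge\tau_L^{-1}\,\E[\int_0^{u^{(1)}}\psi(q(u))\,du]$; a flow-balance computation on the stationary cycle sequence of $q$ (the one behind \eqref{eq:57}) shows $\E[u^{(1)}]=\tau_L$. Birkhoff's ergodic theorem applied to that cycle sequence then reduces the desired bound $c\ge c_*$ to the pathwise estimate (d): $\liminf_{t\tinf}\frac1t\int_0^t\psi(q(u))\,du\ge c_*$ a.s.

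For (d), fix $\epsilon>0$. By stationarity of the server-cycle sequence of $q$ and tightness, the states $\{q(u^{(m)}) : m\ge 0\}$ return infinitely often, with asymptotically bounded gaps, to some fixed ball $B$ of radius $\epsilon$ and positive probability; cutting $[0,t]$ at these return epochs tiles it, up to a negligible tail, into segments, each of which is a solution of \eqref{fluid implicit} whose two endpoints lie within $2\epsilon$ of one another. Perturbing the initial point of such a segment so as to close it up exactly changes every $\psi$-integral along it by $O(\epsilon)$ (by continuity of $\psi$ and boundedness of $q$ on the relevant compact) and turns it into a bona-fide PE-candidate spanning some number $L'$ of table cycles, of length within $O(\epsilon)$ of $\tau_{L'}$ by the same flow balance. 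Because $c_*$ is the infimum, over all PE-candidates $q_e$ (of all cycle lengths $\tau_{L'}$), of the one-cycle time-average cost $\tau_{L'}^{-1}\int_0^{\tau_{L'}}\psi(q_e(u))\,du$ --- cf.\ \eqref{optPE} and the bijection of Lemma \ref{lem:bijective} --- the time-average $\psi$-cost over each tiling segment is at least $c_*-\delta(\epsilon)$, with $\delta(\epsilon)\to 0$ as $\epsilon\downarrow 0$. Summing over the segments yields $\frac1t\int_0^t\psi(q(u))\,du\ge c_*-\delta(\epsilon)+o(1)$; letting $t\tinf$ and then $\epsilon\downarrow 0$ gives (d), hence the theorem.

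I expect the main obstacle to be step (d) --- the ``almost periodic'' structure: rigorously closing up a near-repeating fluid-trajectory segment into a genuine PE-candidate while controlling the induced change in its time-average cost, and making the recurrence/tiling argument uniform given that the underlying controls $\pi^n$ are arbitrary admissible (possibly randomized) policies, so the switching decisions of $q$ may themselves be random. Steps (a)--(c) are routine, except that the case $L_n\to\infty$ along the chosen subsequence requires a short separate argument, obtained by running the same near-repeat reasoning at stage-$1$ polling epochs (which recur within every table cycle) in place of server-cycle epochs.
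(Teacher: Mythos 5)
Your proof follows essentially the same route as the paper's: reduce to stable sequences whose embedded stationary distributions are tight; pass to a (possibly random) fluid limit $\bar Q$ of the stationarily-initialized queue; observe that the cycle-embedded sequence revisits a small ball $\sB(r)$ of positive stationary measure, so that the trajectory over each excursion away from $\sB(r)$ is ``nearly periodic''; close such an excursion up into a genuine PE-candidate with an $O(r)$ perturbation; and conclude that the time-average cost is at least $c_*-\delta(r)$. What differs is the bookkeeping that lifts this estimate to the prelimit. The paper applies renewal--reward to $\tilde Q^\ell$ at the ball-return-time level, proves the mean convergence $\E_{\alpha_r^\ell}[\bar R^\ell_r]\ra\E_{\alpha_r}[\bar R_r]$ by a Wald/flow-balance computation (Lemma~\ref{lem:1}), and then bounds the resulting fluid ratio by $c_*-\delta$ directly from Lemma~\ref{lem:NearlyPeriodic} and \eqref{eq:73}. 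You instead apply renewal--reward at the server-cycle level, where $\E[\bar T^n]=\tau_L$ is immediate from flow balance (so Lemma~\ref{lem:1} is not needed), pass the numerator to the fluid limit by Fatou, and then insert a Birkhoff step to reduce the expectation bound $\tau_L^{-1}\E[\int_0^{u^{(1)}}\psi(q)\,du]\ge c_*$ to the pathwise bound $\liminf_t t^{-1}\int_0^t\psi(q)\,du\ge c_*$, which you then prove by the same tiling argument. This is a valid reorganization --- it trades the Kac-type Lemma~\ref{lem:1} for the extra ergodic-theorem step --- and it surfaces a subtlety you are right to flag: if $L_n\tinf$ along the chosen subsequence, the server-cycle length diverges in fluid scale, so fixing $L_n\equiv L$ on a subsequence is not available, and your suggested re-indexing at stage-$1$ polling epochs is only sketched. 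Your step (d) is precisely the content of Lemma~\ref{lem:NearlyPeriodic} together with the estimates \eqref{eq:62}--\eqref{eq:73}; one point to sharpen is that ``perturbing the initial point'' of an excursion is not enough to close it into a bona-fide PE-candidate, since the switching epochs must also be adjusted and the resulting closed curve may dip below zero --- the paper's construction in the proof of Lemma~\ref{lem:NearlyPeriodic}(ii) modifies the busy times of each queue by amounts $\delta_k$ bounded as in \eqref{eq:85}, then adds the nonnegativity shift $\Delta$ in \eqref{eq:68}, and that two-step construction is where the real work in (d) lives.
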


\begin{proof}
See Section \ref{secAsymtOpt}.
\end{proof}

Recall $\Psi^n_{\pi_*^n}$ in \eqref{Psi}, and that $\pi_*^n$ is the binomial-exhaustive policy with the same parameters $(L_*, \mathbf r_*)$ for all $n \ge 1$, where $(L_*, \mathbf r_*)$ are the optimal FCP parameters.

\begin{theorem} [asymptotic optimality] \label{thm:AsympOpt}
If $\{\bar \Psi^n_{\pi_*^n} : n \geq 1\}$ is UI, then
$$\lim_{n \arr \infty} \lim_{t \arr \infty} \bar C^n_{\pi_*^n}(t) = c_* \quad w.p.1.$$
\end{theorem}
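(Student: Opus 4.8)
The plan is to evaluate the inner limit $\lim_{t\to\infty}\bar C^n_{\pi^n_*}(t)$ in closed form by a regenerative (renewal--reward) argument, and then to let $n\to\infty$ using the functional weak law for the binomial-exhaustive policy to identify the limiting stationary cost, with the UI hypothesis entering precisely to exchange the expectation with the limit in $n$.

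First I would fix $n$. Under the binomial-exhaustive policy with parameters $(L_*,\mathbf r_*)$ the embedded DTMC $\tilde Q_{\pi^n_*}$ is ergodic (Proposition 1 in \cite{fricker1994monotonicity}), so, started from $\tilde Q^n(\infty)$, the process $\bar Q^n_{\pi^n_*}$ is stationary and its sequence of server cycles is stationary and ergodic. The renewal--reward theorem (this is the content of \eqref{eq:11} in Section \ref{secProofThm4}) gives
\[
\lim_{t\to\infty}\bar C^n_{\pi^n_*}(t)=\frac{\E\off{\bar\Psi^n_{\pi^n_*}}}{\E\off{\bar T^n_{L_*}}}\qquad\text{w.p.1},
\]
a \emph{deterministic} constant; and by the flow-balance identity \eqref{eq:ExpeCycle}, $\E\off{\bar T^n_{L_*}}=sL_*/(1-\rho)=\tau_{L_*}$ for every $n$. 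Hence the theorem reduces to showing $\lim_{n\to\infty}\E\off{\bar\Psi^n_{\pi^n_*}}=\int_0^{\tau_{L_*}}\psi(q_*(u))\,du$, the right-hand side being $\tau_{L_*}c_*$: indeed $q_*$ is the global limit cycle of the HDS under SB-PR with parameters $(L_*,\mathbf r_*)$, so by Theorem \ref{corSBPRopt} and Corollary \ref{cor:LongRunCostSBPR}, $c_*=\tfrac1{\tau_{L_*}}\int_0^{\tau_{L_*}}\psi(q_*(u))\,du$.

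To get the convergence of these expectations I would use Lemma \ref{lem:interchange}: with $\bar Q^n(0)\deq\tilde Q^n(\infty)$ one has $\bar Q^n\Ra q_*$ in $D^K$, and (via the FWLLN of Corollary \ref{cor:BinAdmissible}, applied to the convergent initial condition $\bar Q^n(0)\Ra q_*(a_1)$ produced in the proof of Lemma \ref{lem:interchange}) all fluid-scaled server-switching epochs converge, so in particular $\bar T^n_{L_*}\Ra\tau_{L_*}$. Since the limit $q_*$ is continuous, $J_1$-convergence is uniform convergence on compacts; hence, on the high-probability event that $\bar Q^n$ is uniformly close to the bounded path $q_*$ on $[0,\tau_{L_*}+1]$, the integrands $\psi(\bar Q^n(\cdot))$ are uniformly bounded there (continuity of $\psi$) and the integral over the converging range $[0,\bar T^n_{L_*}]$ converges, so that
\[
\bar\Psi^n_{\pi^n_*}=\int_0^{\bar T^n_{L_*}}\psi\!\of{\bar Q^n(u)}\,du\;\Ra\;\int_0^{\tau_{L_*}}\psi(q_*(u))\,du=\tau_{L_*}c_*,
\]
i.e.\ convergence in probability, the limit being a constant. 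The UI hypothesis on $\{\bar\Psi^n_{\pi^n_*}:n\ge1\}$ then upgrades this to $L^1$-convergence, so $\E\off{\bar\Psi^n_{\pi^n_*}}\to\tau_{L_*}c_*$; dividing by $\tau_{L_*}$ and combining with the first display gives $\lim_{n\to\infty}\lim_{t\to\infty}\bar C^n_{\pi^n_*}(t)=c_*$ w.p.1, since the double limit is deterministic.

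I expect the two points needing care to be: (i) making precise the regenerative structure underlying the renewal--reward identity and the fact that the time-average equals the stationary cycle ratio $\E[\bar\Psi^n_{\pi^n_*}]/\E[\bar T^n_{L_*}]$ (recorded in \eqref{eq:11}); and (ii) passing to the limit in the last display with the \emph{random} upper limit of integration $\bar T^n_{L_*}$ coupled with the $J_1$-convergence of $\bar Q^n$ --- both routine once Corollary \ref{cor:BinAdmissible} and Lemma \ref{lem:interchange} are available. The UI assumption is invoked exactly once, at the very last step, and nothing beyond it is needed.
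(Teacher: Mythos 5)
Your proposal is correct and follows essentially the same route as the paper: the renewal--reward identity \eqref{eq:11}, the flow-balance identity $\E[\bar T^n_{L_*}]=\tau_{L_*}$, the weak convergence $\bar\Psi^n_{\pi^n_*}\Rightarrow\tau_{L_*}c_*$ from Lemma \ref{lem:interchange} and continuity of $\psi$, and finally the UI hypothesis to upgrade to $L^1$-convergence. The only difference is cosmetic — you spell out the continuous-mapping step (uniform convergence on compacts, random upper integration limit) where the paper simply cites the continuous mapping theorem.
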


\begin{proof}
See Section \ref{secProofThm4}.
\end{proof}

The following is an immediate corollary to Theorems \ref{thm:AsympLB} and \ref{thm:AsympOpt} (alternatively, to Theorem \ref{thmMain}), Corollary \ref{coro:AsyOpt},
Proposition \ref{LEM:LINEARCYCLIC}, and Theorem \ref{th:MomentCheck}.

\begin{corollary} \label{Cor:1}
Assume that Assumption \ref{assum2} holds and that the basic table is cyclic. Then the exhaustive policy is asymptotically optimal
under either of the following:
\begin{enumerate}[(i)]
\item For some $p \ge 1$, $\psi(x) = O(||x||^p)$ and is separable convex, and in addition, there exists an $\ep > 0$ such that
$\E\off{e^{t S_k}} < \infty$ for all $t \in (-\epsilon, \epsilon)$ and for all $k \in \mathcal K$.
\item $\psi(x) = O(||x||)$, and in addition, $\E\off{S_k^2} < \infty$ for all $k \in \mathcal{K}$.
\end{enumerate}
\end{corollary}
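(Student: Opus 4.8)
The plan is to obtain Corollary~\ref{Cor:1} by assembling results already established, since it merely glues together the fluid-optimality of the exhaustive policy for cyclic tables, the SB-PR--to--binomial-exhaustive translation, the UI criterion of Theorem~\ref{th:MomentCheck}, and the asymptotic-optimality conclusions of Theorem~\ref{thmMain} and Corollary~\ref{coro:AsyOpt}. First I would pin down the optimal FCP parameters. In both cases the holding cost is separable convex -- in part~(i) by hypothesis, and in part~(ii) for the linear holding costs that are the setting of that part -- so, the basic table being cyclic, Proposition~\ref{LEM:LINEARCYCLIC} gives that $q_{exh}$, the one-cycle PE under the exhaustive policy, solves \eqref{eq:FluidOpt} and is therefore an optimal PE-candidate for the FCP. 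Reading its reduction ratios off \eqref{r_i} shows $r_i = 1$ for every stage (the server empties each queue it visits), so by Lemma~\ref{lem:bijective} the optimal FCP parameters are $(L_*,\mathbf r_*) = (1,\mathbf 1)$, and by Theorem~\ref{corSBPRopt} the optimal value $c_*$ is the time-average of $\psi$ along $q_{exh}$ over a cycle. I would then observe that the binomial-exhaustive policy $\pi_*^n$ with these parameters is exactly the plain exhaustive policy: with success probability $r_i = 1$ every Bernoulli trial in Definition~\ref{def:Binomial} succeeds, so at each polling epoch the server serves every waiting customer together with all arrivals during their service times, i.e.\ it exhausts the queue -- and the same parameters are used for every $n$.

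Second I would verify the UI hypothesis of Theorem~\ref{thmMain} via Theorem~\ref{th:MomentCheck}, whose standing Assumption~\ref{assum2} is in force here. In part~(ii) we have $p = 1$ and $\E[S_k^2] < \infty$, so Theorem~\ref{th:MomentCheck}(ii) applies directly. In part~(i): if $p > 1$ then Theorem~\ref{th:MomentCheck}(i) applies, using finiteness of the m.g.f.\ of $S_k$ on $(-\epsilon,\epsilon)$; if $p = 1$, that same m.g.f.\ condition forces $\E[S_k^2] < \infty$, so Theorem~\ref{th:MomentCheck}(ii) applies. In every case $\{\bar\Psi^n_{\pi_*^n} : n \ge 1\}$ is UI.

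Finally I would conclude: Theorem~\ref{thmMain} now gives $\liminf_{n\tinf}\liminf_{t\tinf}\bar C^n_{\pi^n}(t) \ge \lim_{n\tinf}\lim_{t\tinf}\bar C^n_{\pi_*^n}(t) = c_*$ w.p.1 for every admissible sequence $\boldsymbol\pi \in \Pi$ (equivalently, combine Theorems~\ref{thm:AsympLB} and~\ref{thm:AsympOpt}), and Corollary~\ref{coro:AsyOpt} upgrades this to the averaged inequality \eqref{asyOpt}; since $\pi_*^n$ is the exhaustive policy for every $n$, the exhaustive policy is asymptotically optimal, as claimed. The argument is essentially bookkeeping, so I do not expect a genuine obstacle. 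The only points that need care are the identification of the optimal SB-PR parameters with the plain exhaustive policy -- both at the fluid level, through \eqref{r_i} and Lemma~\ref{lem:bijective}, and at the stochastic level, through Definition~\ref{def:Binomial} -- and the boundary case $p = 1$ in part~(i), which must be routed through Theorem~\ref{th:MomentCheck}(ii) rather than (i). One should also confirm that the ``at most linear growth'' cost in part~(ii) is covered by the separable-convexity requirement of Proposition~\ref{LEM:LINEARCYCLIC}, which it is for the linear costs that are the primary case of interest.
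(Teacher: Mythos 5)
Your proposal is correct and takes essentially the same route as the paper, which states Corollary~\ref{Cor:1} as immediate from Proposition~\ref{LEM:LINEARCYCLIC}, Theorem~\ref{th:MomentCheck}, Theorem~\ref{thmMain}, and Corollary~\ref{coro:AsyOpt}; the two care points you flag are genuine and you handle them correctly (routing the $p=1$ boundary case of part~(i) through Theorem~\ref{th:MomentCheck}(ii) via finite m.g.f.\ $\Rightarrow$ finite second moment, and noting that part~(ii) as literally stated with $\psi(x)=O(\|x\|)$ does not by itself deliver separable convexity, so the intended reading -- consistent with Table~\ref{table:1} and the remark following Theorem~\ref{th:MomentCheck} -- is that $\psi$ is linear, hence separable convex, so that Proposition~\ref{LEM:LINEARCYCLIC} applies).
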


\subsection{Asymptotic Optimality for the Restricted Problem} \label{secAsyOptRestrict}
Recall that $L_\mathcal{N}$ is the number of table cycles contained in one server cycle of $q_\mathcal{N}$, and that
$\mathbf r_\mathcal{N}$ is the vector of proportion reductions at each stage in $q_\mathcal{N}$.
Let $\boldsymbol \pi_{\mathcal{N}} := \{\pi_\mathcal{N}^{n} : n \geq 1\}$ denote the sequence of binomial-exhaustive policies with parameters
$(L_\mathcal{N}, \mathbf r_\mathcal{N})$.
We then have the following asymptotic optimality result for the restricted class of admissible controls.
The proof of this result follows similar lines of arguments to those in the proofs of Theorems \ref{thm:AsympLB} and \ref{thm:AsympOpt},
and is therefore omitted.

\begin{theorem}[asymptotic optimality for the restricted problem] \label{th:AsympOptRelaxed}
For all $\boldsymbol \pi \in \bigcup\limits_{L \in \mathcal{N}} \Pi^L$ it holds that
$$\liminf_{n \arr \infty} \, \liminf_{t \arr \infty} \, \bar C^n_{\pi^n}(t) \geq c_\mathcal{N} \quad w.p.1,$$
for $c_\mathcal N$ in \eqref{FCPrelax}.
If, in addition, $\{\bar \Psi^n_{\pi_\mathcal{N}^{n}} : n \geq 1\}$ is UI, then
$$\lim_{n\tinf} \lim_{t\tinf} \bar C^n_{\pi_\mathcal{N}^{n}}(t) = c_\mathcal{N} \quad w.p.1.$$
\end{theorem}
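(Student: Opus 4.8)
The plan is to prove the two inequalities separately, in each case adapting the corresponding step of the global analysis and using the fact that admissible sequences now lie in $\bigcup_{L\in\mathcal N}\Pi^L$. For the lower bound I would fix $\boldsymbol\pi\in\Pi^L$ for some $L\in\mathcal N$ and follow the proof of Theorem~\ref{thm:AsympLB}: if $\boldsymbol\pi$ is not stable, or if the associated sequence of stationary distributions is not tight in $\RR_+^K$, then $\liminf_{n\tinf}\liminf_{t\tinf}\bar C^n_{\pi^n}(t)=\infty$ and there is nothing to prove; otherwise Lemma~\ref{lem:Tightness} produces a (possibly stochastic) stationary fluid limit $q$, which by Definition~\ref{def:LCyclic} is an $L$-cycle PE, and the almost-periodicity estimates underlying Theorem~\ref{thm:AsympLB} identify the limiting cost with the cycle average $\tfrac1{\tau_L}\int_0^{\tau_L}\psi(q(u))\,du$. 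Since $q$ is feasible for \eqref{eq:FluidOpt2}, this is at least $\min_{q^L_e\in\mathcal Q^L}\tfrac1{\tau_L}\int_0^{\tau_L}\psi(q^L_e(u))\,du$; minimizing over $L\in\mathcal N$ gives $c_\mathcal N$ as in \eqref{FCPrelax}, and taking the infimum over admissible $\boldsymbol\pi$ completes this part.

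For achievability I would mimic the proof of Theorem~\ref{thm:AsympOpt} with $(L_*,\mathbf r_*)$ replaced by $(L_\mathcal N,\mathbf r_\mathcal N)$. Fix $n$: since $\rho<1$, the embedded DTMC under the binomial-exhaustive policy is ergodic by \cite[Proposition~1]{fricker1994monotonicity}, so by the regenerative structure over server cycles $\lim_{t\tinf}\bar C^n_{\pi_\mathcal N^n}(t)=\E[\bar\Psi^n_{\pi_\mathcal N^n}]\big/\E[\bar T^n_{L_\mathcal N}]$ with probability one, and $\E[\bar T^n_{L_\mathcal N}]=sL_\mathcal N/(1-\rho)$ by \eqref{eq:ExpeCycle}. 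Letting $n\tinf$, Lemma~\ref{lem:interchange} gives $\bar Q^n\Ra q_e$ in $D^K$ when $\bar Q^n(0)\deq\tilde Q^n(\infty)$, where $q_e$ is the global limit cycle of SB-PR with parameters $(L_\mathcal N,\mathbf r_\mathcal N)$, i.e.\ the optimal $L_\mathcal N$-cycle PE-candidate of \eqref{eq:FluidOpt2}. By the continuous-mapping theorem $\bar\Psi^n_{\pi_\mathcal N^n}\Ra\int_0^{\tau_{L_\mathcal N}}\psi(q_e(u))\,du$, and the hypothesized uniform integrability of $\{\bar\Psi^n_{\pi_\mathcal N^n}:n\ge1\}$ upgrades this to convergence of expectations. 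Dividing by $\tau_{L_\mathcal N}=sL_\mathcal N/(1-\rho)$ (cf.\ \eqref{eq:57}) yields $\lim_{n\tinf}\lim_{t\tinf}\bar C^n_{\pi_\mathcal N^n}(t)=\tfrac1{\tau_{L_\mathcal N}}\int_0^{\tau_{L_\mathcal N}}\psi(q_e(u))\,du=c_\mathcal N$, which together with the lower bound gives both assertions; a version in expectation would then follow by the domination argument of Corollary~\ref{coro:AsyOpt}.

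The hard part is not anything specific to the restricted problem — replacing the infimum over all augmented tables by a minimum over the finite set $\mathcal N$ changes nothing essential — but the two ingredients reused from the global analysis: on the lower-bound side, verifying that every stationary fluid limit of an $L$-cyclic sequence (which need not be deterministic, nor converge to a PE by itself) is within $\epsilon$ of some $L$-cycle PE for every $\epsilon>0$; and on the achievability side, the interchange of the $n\tinf$ and $t\tinf$ limits, which rests on the UI hypothesis together with the ergodicity and tightness inputs packaged into Lemma~\ref{lem:interchange}. Both are carried out for the global problem in Sections~\ref{secAsympOpt} and~\ref{SecTranslatingPRC}, which is why only this sketch is given.
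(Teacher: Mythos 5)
Your proposal is correct and follows precisely the route the paper itself indicates: the paper omits the proof of Theorem~\ref{th:AsympOptRelaxed}, stating only that it follows the same lines as Theorems~\ref{thm:AsympLB} and~\ref{thm:AsympOpt}, and you reproduce exactly that adaptation, with $(L_*,\mathbf r_*,c_*)$ replaced by $(L_{\mathcal N},\mathbf r_{\mathcal N},c_{\mathcal N})$ and the lower bound obtained by invoking Definition~\ref{def:LCyclic} to restrict the fluid limit to an $L$-cycle PE feasible for \eqref{eq:FluidOpt2} with $L\in\mathcal N$. One small remark worth flagging: because Definition~\ref{def:LCyclic} already forces the stationary fluid limit to be a bona-fide $L$-cycle PE (not merely nearly periodic), the approximation step of Lemma~\ref{lem:NearlyPeriodic} essentially collapses to an identity here, so the ``almost-periodicity estimates'' you cite are only needed through the renewal-reward scaffolding of Lemma~\ref{lem:1}, which does carry over unchanged.
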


Following the same lines of arguments as in Corollary \ref{coro:AsyOpt}, Theorem \ref{th:AsympOptRelaxed} implies that if $\{\bar \Psi^n_{\pi_\mathcal{N}^{n}} : n \geq 1\}$ is UI, then $\boldsymbol \pi_{\mathcal{N}}$ is asymptotically optimal among the restricted class of admissible controls. In particular, \eqref{asyOpt} holds.
This, together with Theorem \ref{th:MomentCheck}, implies the following corollary.

\begin{corollary} \label{Cor:3}
Suppose that $\psi(x) = O(||x||^p)$, for some $p \ge 1$, and that Assumption \ref{assum2} holds.
If, for some $\ep > 0$, $\E\off{e^{t S_k}} < \infty$ for all $t \in (-\epsilon, \epsilon)$ and for all $k \in \mathcal K$,
then the binomial-exhaustive policy with parameters $(L_\mathcal{N}, \mathbf r_\mathcal{N})$ is asymptotically optimal among
$\bigcup\limits_{L \in \mathcal{N}} \Pi^L$.
\end{corollary}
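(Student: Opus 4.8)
The plan is to derive Corollary \ref{Cor:3} as a direct combination of Theorem \ref{th:AsympOptRelaxed} and Theorem \ref{th:MomentCheck}; the only genuine work is to verify that $\{\bar\Psi^n_{\pi^n_\mathcal N}:n\ge1\}$ is UI under the stated hypotheses, after which the asymptotic-optimality statement (i.e., that \eqref{asyOpt} holds with the competing controls ranging over $\bigcup_{L\in\mathcal N}\Pi^L$) follows from the $w.p.1$ statements in Theorem \ref{th:AsympOptRelaxed} exactly as in the proof of Corollary \ref{coro:AsyOpt}.

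First I would invoke the discussion immediately preceding the corollary: by Theorem \ref{th:AsympOptRelaxed}, $\liminf_{n\tinf}\liminf_{t\tinf}\bar C^n_{\pi^n}(t)\ge c_\mathcal N$ w.p.1 for every $\boldsymbol\pi\in\bigcup_{L\in\mathcal N}\Pi^L$, and, provided $\{\bar\Psi^n_{\pi^n_\mathcal N}\}$ is UI, $\lim_{n\tinf}\lim_{t\tinf}\bar C^n_{\pi^n_\mathcal N}(t)=c_\mathcal N$ w.p.1. Applying Fatou's lemma twice to the former, and a dominated-convergence argument to the latter (using that $\tilde Q_{\pi^n_\mathcal N}$ is ergodic by \citep[Proposition 1]{fricker1994monotonicity}, that $t\mapsto\bar C^n_{\pi^n_\mathcal N}(t)$ is continuous, and that $\E\off{\sup_{t\ge0}\bar C^n_{\pi^n_\mathcal N}(t)}<\infty$, precisely as in the proof of Corollary \ref{coro:AsyOpt}), yields \eqref{asyOpt} with the competing family $\bigcup_{L\in\mathcal N}\Pi^L$. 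Hence it remains only to check the UI hypothesis.

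Second, I would verify that $\{\bar\Psi^n_{\pi^n_\mathcal N}:n\ge1\}$ is UI. The key observation is that $\pi^n_\mathcal N$ is a binomial-exhaustive policy with the \emph{same} fixed parameters $(L_\mathcal N,\mathbf r_\mathcal N)\in\NN\times\mathcal R$ for every $n$, exactly as $\pi^n_*$ in Theorem \ref{th:MomentCheck} is a binomial-exhaustive policy with fixed parameters $(L_*,\mathbf r_*)\in\NN\times\mathcal R$. The proof of Theorem \ref{th:MomentCheck} in Section \ref{ap:MomentCheck} uses no property of $(L_*,\mathbf r_*)$ beyond membership in $\NN\times\mathcal R$ (equivalently, beyond the fact that the induced SB-PR fluid control admits a global limit cycle, which holds for every $(L,\mathbf r)\in\NN\times\mathcal R$ by Lemma \ref{lem:PRC}); the relevant moment bounds depend only on the arrival and service rates and the service-time moments. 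Thus its conclusion holds verbatim with $\pi^n_*$ replaced by $\pi^n_\mathcal N$. Now, under the hypotheses of Corollary \ref{Cor:3}, Assumption \ref{assum2} holds and $\E\off{e^{tS_k}}<\infty$ for all $t\in(-\epsilon,\epsilon)$ and all $k\in\mathcal K$; in particular every polynomial moment of $S_k$ is finite, so $\E\off{S_k^2}<\infty$. If $p>1$, Theorem \ref{th:MomentCheck}(i) gives that $\{\bar\Psi^n_{\pi^n_\mathcal N}\}$ is UI; if $p=1$, then $\E\off{S_k^2}<\infty$ together with Theorem \ref{th:MomentCheck}(ii) gives the same conclusion. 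In either case the UI hypothesis is met, completing the argument.

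I do not expect a real obstacle: the corollary is a packaging of results already established. The only point requiring care is the transfer of Theorem \ref{th:MomentCheck} from $\pi^n_*$ to $\pi^n_\mathcal N$, which is immediate once one notes that both are binomial-exhaustive policies with a fixed admissible parameter pair. If one prefers not to re-read Section \ref{ap:MomentCheck}, an alternative is to record a slightly more general version of Theorem \ref{th:MomentCheck} — that $\{\bar\Psi^n_{\pi^n}\}$ is UI for any binomial-exhaustive sequence with fixed parameters $(L,\mathbf r)\in\NN\times\mathcal R$, under the same moment conditions — and cite that; this generalization is already implicit in the existing proof.
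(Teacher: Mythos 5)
Your proposal is correct and follows the paper's own route exactly: combine Theorem \ref{th:AsympOptRelaxed} with Theorem \ref{th:MomentCheck} (passing from w.p.1 statements to the expectation statement \eqref{asyOpt} via Fatou's lemma and dominated convergence as in Corollary \ref{coro:AsyOpt}), and verify the UI hypothesis via the moment conditions. The care you take over the transfer of Theorem \ref{th:MomentCheck} from $\pi^n_*$ to $\pi^n_\mathcal N$ is already handled in the paper, whose proof explicitly notes that the argument works for the binomial-exhaustive policy with any fixed $(L,\mathbf r)\in\NN\times\mathcal R$.
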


\subsection{Summary of Established Asymptotic Optimality Results}

We summarize the conditions and results of Corollaries \ref{Cor:1} and \ref{Cor:3} in Table \ref{table:1}.
\begin{table}[H]
\small
\centering
\begin{tabular}{| c !{\vrule width 1pt}  c | c | c | }
\hline
& Corollary \ref{Cor:1} (i) & Corollary \ref{Cor:1} (ii) & Corollary \ref{Cor:3}  \\  \hline
Admissible controls & $\Pi$ & $\Pi$ &  $\bigcup_{L \in \mathcal{N}} \Pi^L$ \\  \hline
Cost function &  \vtop{\hbox{\strut polynomial growth}\hbox{\strut and separable convex}} & linear growth & polynomial growth \\  \hline
Basic table & cyclic & cyclic  &  general \\  \hline
Service time distributions   &  finite m.g.f.'s  & second moments &  finite m.g.f.'s  \\  \hline
Optimal control & \vtop{\hbox{\strut $(L, \mathbf r) = (1, \mathbf 1)$}\hbox{\strut (exhaustive)}} &
\vtop{\hbox{\strut $(L, \mathbf r) = (1, \mathbf 1)$}\hbox{\strut (exhaustive)}} &
\vtop{\hbox{\strut binomial-exhaustive with}\hbox{\strut parameters $(L_\mathcal N, \mathbf r_\mathcal N)$}}   \\  \hline
\end{tabular}
\caption{Established asymptotic optimality results}
\label{table:1}
\end{table}

\section{Proofs of the Main Results} \label{secProofsofMainResults}
In this section we prove Theorems \ref{thm:AsympLB} and \ref{thm:AsympOpt}, from which Theorem \ref{thmMain} follows,
and Theorem \ref{th:MomentCheck}.
Some technical results which are employed in the proofs are proved in the appendix.

\subsection{Proof of Theorem \ref{thm:AsympLB}} \label{secAsymtOpt}
To establish Theorem \ref{thm:AsympLB}, it is sufficient restrict attention to
sequences of admissible controls $\boldsymbol \pi \in \Pi$ under which the corresponding sequences of embedded stationary DTMC's
$\{\tilde{Q}_{\pi^n}^n(\infty) : n \geq 1\}$ are tight; the set of such controls $\boldsymbol \pi$ is not empty due to Lemma \ref{lem:interchange}.
Take $\tilde Q_{\pi^n}^n(0) \deq \tilde{Q}_{\pi^n}^n(\infty)$ for each $n \geq 1$. Then $\{\bar Q_{\pi^n}^n(0) : n \geq 1\}$ is tight,
so that $\{\bar Q_{\pi^n}^{n} : n \geq 1\}$ is $C$-tight in $D^K$ by Lemma \ref{lem:Tightness}.

To decrease the notational burden, we fix a sequence of admissible controls $\boldsymbol \pi$ and a corresponding
converging subsequence of $\{\bar Q_{\pi^{n}}^{n} : n \geq 1\}$, but we remove the subscript $\pi^n$ from the notation, and denote the converging subsequence
by a superscript $\ell$.
For example, $\bar Q^\ell := \bar Q^{n_\ell}$ denotes the fluid-scaled queue process in system $n_\ell$, $\ell \ge 1$, operating
under the control $\pi^{n_\ell}$ in the converging subsequence of $\{\bar Q_{\pi^{n_\ell}}^{n_\ell} : \ell \ge 1\}$.

Let $\bar Q$ denote the limit of $\{\bar Q^\ell : \ell \ge 1\}$,
and let $ \alpha^\ell$ denote the stationary distribution of the corresponding embedded DTMC $\{\tilde Q^\ell(m): m \geq 0\}$.
Since each process in the pre-limit is stationary, the limit $\{\tilde Q(m): m \geq 0\}$ of this subsequence of DTMCs
is also stationary; we denote the corresponding stationary distribution by $\alpha$.
For $r \geq 0$, let $\mathscr B(r)$ denote a ball in $\R_+^K$ with positive $\af$-measure, namely, $\alpha(\mathscr B(r)) \in (0,1]$,
and let $B^o = (B^o_1, \dots B^o_K)$ denote the center of this ball.
Note that we do not rule out the case where $r = 0$, which is tantamount to $\sB(r)$ being a point in $\R_+^K$
and the limiting distribution $\af$ having a point mass on $B^o$.

Due to the weak convergence of $\{\tilde Q^\ell: \ell \geq 1\}$ to $\tilde Q$, we have
\begin{equation} \label{eq:22}
\lim_{\ell\tinf} \Prob{\tilde Q ^\ell(0) \in \mathscr B(r) } = \Prob{\tilde Q(0) \in \mathscr B(r) },
\end{equation}
so that
\begin{equation*} \label{eq:16}
\lim_{\ell\tinf} \af^\ell(\sB(r)) = \af(\sB(r))>0.
\end{equation*}
It follows from \eqref{eq:22} that $\af^\ell(\sB(r)) > 0$ for all $\ell$ large enough,
so that $\{\tilde Q ^\ell(m): m \geq 0\}$ must return to $\sB(r)$ infinitely often for any such $\ell$.
Similarly, there are infinitely many $m$'s for which $\tilde Q(m) \in \sB(r)$.
Let
\[
N^\ell_{r} :=\inf\{m\geq 1: \tilde Q^\ell(m)\in\sB(r)\} \quad  \mbox{and} \quad
N_r :=\inf\{m\geq 1: \tilde Q(m)\in \sB(r)\}.
\]
Then for
\bequ \label{alphas}
\alpha_{r}^\ell(\cdot) := \Prob{\tilde Q^\ell(0)\in \cdot \, | \, \tilde Q^\ell(0) \in \sB(r)} \mbox{ and ~}
\alpha_{r}(\cdot) :=\Prob{\tilde Q(0)\in \cdot \, | \, \tilde Q(0)\in \sB(r)},
\eeq
we have
\begin{equation} \label{limBallMeasure}
\lim_{\ell\tinf} \E_{\alpha_{r}^\ell}[ N^\ell_{r} ] = \lim_{\ell\tinf} \frac{1}{\alpha^\ell\of{\sB \of{r}}} = {1 \over \af(\sB(r))} = \E_{\alpha_{r}}\off{ N_r}.
\end{equation}
Define the following {\em first return times} to $\sB(r)$
\begin{equation} \label{eq:return}
\begin{split}
\bar R^\ell_{r} &:=\inf\{\bar U^{(m),\ell}>0: \bar Q^\ell(\bar U^{(m),\ell}) \in \sB(r)\} \\
\bar R_r &:=\inf\{\bar U^{(m)}>0: \bar Q(\bar U^{(m)}) \in \sB(r)\}.
\end{split}
\end{equation}
The next lemma is proved in Section \ref{SecProofOfRn}.

\begin{lemma} \label{lem:1}
The subsequence $\{\bar R^\ell_{r} : \ell \ge 1\}$ in \eqref{eq:return} is UI and satisfies $\bar R^\ell_{r} \Arr \bar R_{r}$. Hence,
\bes
\E_{\alpha_{r}^\ell}\off{\bar R^\ell_{r} } \ra \E_{\alpha_{r}}\off{\bar R_r} \qasq \ell\tinf.
\ees
\end{lemma}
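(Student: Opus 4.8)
The plan is to write the first-return time as a sum of server-cycle lengths and to control its mean by a cycle (Kac) formula, which is precisely what avoids any moment assumption on the stationary queues. Throughout, fix the radius $r$ so that $\alpha(\partial\sB(r))=0$; this excludes at most countably many values and is therefore no loss of generality. Write $\bar T^{(m),\ell}$ for the fluid-scaled length of the $m$th server cycle in system $n_\ell$. Since $\bar U^{(0),\ell}=0$ and $\tilde Q^\ell(m)=\bar Q^\ell(\bar U^{(m),\ell})$, one has $\bar R^\ell_r=\bar U^{(N^\ell_r),\ell}=\sum_{m=1}^{N^\ell_r}\bar T^{(m),\ell}$, and likewise $\bar R_r=\sum_{m=1}^{N_r}\bar T^{(m)}$ for the limiting process.

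First I would establish the weak convergence $\bar R^\ell_r\Arr\bar R_r$. Since $\bar Q^\ell\Arr\bar Q$ in $D^K$ along the subsequence, the argument of Lemma~\ref{lem:Tightness} gives that the fluid-scaled server-switching epochs converge, so that $\bigl(\bar U^{(m),\ell},\,\bar Q^\ell(\bar U^{(m),\ell})\bigr)_{m\ge1}\Arr\bigl(\bar U^{(m)},\,\bar Q(\bar U^{(m)})\bigr)_{m\ge1}$ jointly; because $\sB(r)$ is a continuity set, the integer-valued hitting indices converge, $N^\ell_r\Arr N_r$, and then $\bar R^\ell_r\Arr\bar R_r$ by the continuous-mapping theorem for a random (integer) index of a jointly convergent sequence.

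The crux is a uniform bound on $\E_{\alpha_r^\ell}[\bar R^\ell_r]$, which I would obtain from the additive-reward form of Kac's recurrence formula applied to the stationary cycle sequence $\{(\tilde Q^\ell(m),\,\bar T^{(m+1),\ell}):m\ge0\}$: for the set $\sB(r)$ this yields $\E_{\alpha_r^\ell}\bigl[\sum_{m=0}^{N^\ell_r-1}\bar T^{(m+1),\ell}\bigr]=\E_{\alpha^\ell}[\bar T^{(1),\ell}]/\alpha^\ell(\sB(r))$. By the flow-balance identity \eqref{eq:ExpeCycle}, $\E_{\alpha^\ell}[\bar T^{(1),\ell}]=sL/(1-\rho)$ for \emph{every} $\ell$, so $\E_{\alpha_r^\ell}[\bar R^\ell_r]=sL/\bigl((1-\rho)\alpha^\ell(\sB(r))\bigr)$, which by \eqref{eq:22} converges to $sL/\bigl((1-\rho)\alpha(\sB(r))\bigr)$. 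The same formula applied to the limiting stationary cycle sequence gives $\E_{\alpha_r}[\bar R_r]=\E_\alpha[\bar T^{(1)}]/\alpha(\sB(r))$; since $\bar T^{(1),\ell}\Arr\bar T^{(1)}$ with the prelimit means equal to the constant $sL/(1-\rho)$, Fatou gives $\E_\alpha[\bar T^{(1)}]\le sL/(1-\rho)<\infty$, and then the flow-balance argument behind \eqref{eq:57}, now at the fluid level for a stationary server cycle, forces equality, $\E_\alpha[\bar T^{(1)}]=sL/(1-\rho)$; equivalently, and avoiding any discussion of the stationary content, one telescopes the workload balance over the return excursion, where both endpoints lie in the bounded ball $\sB(r)$ so that the residual workload term is pathwise bounded by a constant depending only on $r$, together with $\E_{\alpha_r}[N_r]=1/\alpha(\sB(r))$ from \eqref{limBallMeasure}. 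Either way $\E_{\alpha_r^\ell}[\bar R^\ell_r]\to\E_{\alpha_r}[\bar R_r]<\infty$, and then uniform integrability of $\{\bar R^\ell_r:\ell\ge1\}$ follows from the standard criterion that nonnegative random variables converging weakly with convergence of the means to the mean of the limit are UI (bound $\E[\bar R^\ell_r\mathbf 1\{\bar R^\ell_r>M\}]\le2\,\E[(\bar R^\ell_r-M/2)^+]$ and let $\ell\to\infty$ and then $M\to\infty$). Combined with the weak convergence above, this gives the stated convergence of expectations.

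The step I expect to be the main obstacle is exactly the uniform integrability: a naive pathwise bound would control $\bar R^\ell_r$ only through the stationary queue content accumulated before the first return to $\sB(r)$, and that content need not be uniformly integrable, since for Theorem~\ref{thm:AsympLB} only \emph{tightness} of $\{\tilde Q^\ell(\infty)\}$ is assumed (no moment bounds on the service times are in force). The cycle formula removes this difficulty by expressing $\E_{\alpha_r^\ell}[\bar R^\ell_r]$ exactly in terms of the moment-free, $\ell$-independent mean cycle length $sL/(1-\rho)$ and of $\alpha^\ell(\sB(r))$; identifying the limit's mean return time the same way then upgrades the weak convergence to uniform integrability.
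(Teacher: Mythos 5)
Your proof is correct, and its overall architecture coincides with the paper's: (a) weak convergence of the return time via a continuous-mapping argument for first-passage functionals; (b) convergence of the means via a cycle identity that expresses $\E_{\alpha_r^\ell}[\bar R_r^\ell]$ in terms of moment-free, control-independent quantities; (c) uniform integrability deduced from (a) and (b) by the Scheff\'e/Billingsley criterion for nonnegative weakly convergent sequences. The one place you diverge is step (b). The paper does not invoke the rewarded Kac/Palm cycle formula as a black box; instead it writes the queue balance for each class $k$ over the excursion $[0,\bar R_r^\ell]$ (equation \eqref{eq:82}), kills the endpoint terms by stationarity and the boundedness of $\sB(r)$, applies Wald's equation to the Poisson arrivals and to the sums of service and switchover durations, and solves the resulting linear system to obtain $\E_{\alpha_r^\ell}[\bar R_r^\ell]=\frac{s}{1-\rho}\E_{\alpha_r^\ell}[N_r^\ell]$ (\eqref{eq:barRnk}); it then repeats the same computation at the fluid level for the limit $\bar Q$ using \eqref{fluidLimOfTight} to obtain the parallel identity \eqref{eq:Rr}, and only at that point uses the unrewarded Kac relation $\E_{\alpha_r^\ell}[N_r^\ell]=1/\alpha^\ell(\sB(r))$ through \eqref{limBallMeasure}. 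Your route packages the same flow-balance content more compactly by applying the additive-reward cycle formula directly to the stationary pairs $(\tilde Q^\ell(m),\bar T^{(m+1),\ell})$ and feeding in \eqref{eq:ExpeCycle}; the paper's route is more self-contained, deriving the needed identity from scratch rather than quoting the Palm formula. Your Fatou-plus-workload-telescoping argument for the limit process is, modulo presentation, the paper's derivation of \eqref{eq:Rr} from \eqref{fluidLimOfTight}. Two small remarks: your explicit stipulation that $\alpha(\partial\sB(r))=0$ is a good hygiene step that the paper leaves implicit in citing Whitt's first-passage-time theorem; and your closing discussion correctly identifies why a naive pathwise bound fails and why the cycle identity is the crux, which is precisely the reason the paper adopts a balance-equation argument rather than attempting a direct moment bound on the excursion.
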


Observe that the trajectory of $\bar Q$ over one return time (from time $0$ to $\bar R_r$) is ``nearly periodic" for small $r$,
in the sense that both $\bar Q(0)$ and $\bar Q(\bar R_r)$ are in $\sB(r)$, although the return time $\bar R_r$ may increase as $r$ decreases.

The next lemma, whose proof is given in Section \ref{SecProofOfRn} below, provides an upper bound on the value of $\bar R_r$,
and formalizes the observation that $\bar Q$ is ``nearly periodic,'' by proving that it can be made arbitrarily close to a PE-candidate.
To emphasize the fact that that PE-candidate depends on the realization of $\bar Q$, and therefore on the sample point $\omega \in \Omega$
(where $\Omega$ is the underlying sample space), we make explicit the dependence on $\omega$ by adding it to the notation when needed.
For example, we write $\bar Q(\omega, \cdot)$ for the sample path $\{\bar Q(t) : t \ge 0\}$ and $\bar R_r(\omega)$
for the realization of the random variable $\bar R_r$ corresponding to $\omega$.

\begin{lemma}  \label{lem:NearlyPeriodic}
There exist constants $d_1, d_2>0$ such that the following hold.

(i) $\left|\bar R_{r} - \tau_{N_{r}} \right| \leq d_1 r$ w.p.1.

(ii) There exists a set $E \subseteq \Omega$, with $P(E) = 1$, such that, for each $\omega \in E$,
there exists an $N_{r}(\omega)$-cycle PE-candidate $q^\omega$ for which
\begin{equation} \label{eq:60}
|| \bar Q(\omega, \cdot) - q^\omega||_{ \bar R_{r}(\omega) \vee \tau_{N_{r}(\omega)}  } \leq d_2 r.
\end{equation}
\end{lemma}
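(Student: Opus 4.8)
\textbf{Proof proposal for Lemma \ref{lem:NearlyPeriodic}.}

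The plan is to first construct the candidate PE $q^\omega$ directly from the realized trajectory $\bar Q(\omega,\cdot)$, and then show that, because both endpoints of the return interval lie in the small ball $\sB(r)$, the trajectory stays close to that candidate throughout. First I would record the busy times and switchover times realized by $\bar Q(\omega,\cdot)$ over the interval $[0,\bar R_r(\omega)]$. By Lemma \ref{lem:Tightness}, between consecutive server-switching epochs the trajectory $\bar Q$ evolves deterministically via \eqref{fluid implicit}, with the queue attended at stage $i$ decreasing at rate $\mu_{p(i)}-\lambda_{p(i)}$ and all other queues increasing at their arrival rates; so over one return time, $\bar Q(\omega,\cdot)$ is completely determined by $\bar Q(\omega,0)$ together with the vector of realized busy times $(b_i^{(m)}(\omega))$. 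Using these busy times (equivalently, the induced proportions $r_i$ in the sense of \eqref{r_i}) and the number $N_r(\omega)$ of table cycles in $[0,\bar R_r(\omega)]$, Lemma \ref{lem:bijective} gives a \emph{unique} $N_r(\omega)$-cycle PE-candidate $q^\omega$ with those proportions. The key algebraic point is then that $q^\omega$ and $\bar Q(\omega,\cdot)$ satisfy the \emph{same} affine recursion $\Gamma' = \Gamma_{IL}\circ\cdots\circ\Gamma_1$ over one ($N_r(\omega)$-)cycle — $\bar Q(\omega, \bar R_r(\omega)) = \Gamma'(\bar Q(\omega,0))$ and $q^\omega(0) = \Gamma'(q^\omega(0))$ — so their difference is governed by the linear map $\mathcal A'$.

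For part (i): flow balance over the interval $[0,\bar R_r(\omega)]$ says the server switches for a fraction $1-\rho + O(r/\bar R_r(\omega))$ of the time — the $O(r)$ correction coming from the fact that the workload at time $\bar R_r(\omega)$ differs from the workload at time $0$ by at most $\text{(const)}\cdot r$, since both states lie in $\sB(r)$ and the workload is a fixed linear functional of the queue. The total switchover time over $N_r(\omega)$ table cycles is exactly $s N_r(\omega)$ (using Assumption \ref{assum1}, these are deterministic in the limit), and $\tau_{N_r(\omega)} = s N_r(\omega)/(1-\rho)$ by \eqref{eq:57}. Equating $\bar R_r(\omega)(1-\rho) = s N_r(\omega) + O(r)$ and solving gives $|\bar R_r(\omega) - \tau_{N_r(\omega)}| \le d_1 r$ for a constant $d_1$ depending only on the system parameters.

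For part (ii): I would apply the contraction estimate from the proof of Lemma \ref{lem:PRC}. Since $\varrho(\mathcal A') < 1$ (established there, using $\rho<1$), we have $\|\bar Q(\omega,0) - q^\omega(0)\| = \|\Gamma'(\bar Q(\omega,0)) - \Gamma'(q^\omega(0)) + (\bar Q(\omega,\bar R_r(\omega)) - \Gamma'(\bar Q(\omega,0)))\|$; but $\bar Q(\omega,\bar R_r(\omega)) = \Gamma'(\bar Q(\omega,0))$ exactly by the cycle dynamics, so actually $\bar Q(\omega,0) - q^\omega(0) = \mathcal A'(\bar Q(\omega,0) - q^\omega(0)) + (\bar Q(\omega,0) - \bar Q(\omega,\bar R_r(\omega)))$, whence $\bar Q(\omega,0) - q^\omega(0) = (I - \mathcal A')^{-1}(\bar Q(\omega,0) - \bar Q(\omega,\bar R_r(\omega)))$ — and the right-hand side has norm at most $\|(I-\mathcal A')^{-1}\|\cdot\text{diam}(\sB(r)) \le \text{(const)}\cdot r$ because both endpoints lie in $\sB(r)$. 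Thus the initial conditions are within $O(r)$. Propagating this through the (Lipschitz, with a parameter-dependent constant over the bounded number of stages in one cycle) flow map $\Gamma_i$ and the deterministic inter-switching dynamics, and using part (i) to control the length mismatch $\bar R_r(\omega) \vee \tau_{N_r(\omega)}$, gives $\|\bar Q(\omega,\cdot) - q^\omega\|_{\bar R_r(\omega)\vee\tau_{N_r(\omega)}} \le d_2 r$. The set $E$ of full probability is where the FWLLN-type deterministic inter-switching behavior of Lemma \ref{lem:Tightness} holds and the realized busy/switchover quantities are finite — this is almost sure.

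The main obstacle I expect is making the Lipschitz propagation uniform: one must verify that the constant relating $\|\bar Q(\omega,\cdot) - q^\omega\|$ on the whole interval to $\|\bar Q(\omega,0) - q^\omega(0)\|$ does not depend on $N_r(\omega)$ (which is random and can be large as $r \to 0$ is \emph{not} the issue, since $N_r(\omega)$ is fixed for fixed $r$ and $\omega$, but one still wants $d_2$ uniform in $r$ and $\omega$). This is handled precisely by the contraction $\varrho(\mathcal A')<1$: the per-cycle error does not accumulate but is instead slaved to the current endpoint discrepancy, which is always $O(r)$; so the geometric decay absorbs the number of cycles and $d_2$ can be taken uniform. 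A secondary subtlety is that the ``stage structure'' (which queues get exhausted, hence how many stages actually have positive busy time) could in principle differ between $\bar Q(\omega,\cdot)$ and $q^\omega$, but since $q^\omega$ is \emph{defined} to have exactly the realized proportions $r_i$, this cannot happen — the two share the same switching geometry by construction.
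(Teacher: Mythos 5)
Part (i) of your proposal is essentially the paper's argument: flow balance over $[0,\bar R_r]$, the $O(r)$ discrepancy from $\bar Q(0),\bar Q(\bar R_r)\in\sB(r)$, and $\bar R_r(1-\rho)=sN_r+O(r)$ together with \eqref{eq:57}. No concerns there.

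Part (ii) is where your route diverges and where there is a genuine gap. You define $q^\omega$ via Lemma \ref{lem:bijective} from the \emph{realized proportions} $r_i$ of $\bar Q(\omega,\cdot)$, so that both $\bar Q$ and $q^\omega$ satisfy the same affine one-server-cycle map $\Gamma'(q)=\mathcal A' q+\mathcal B'$, and you bound the endpoint gap by
$\bar Q(\omega,0)-q^\omega(0)=(I-\mathcal A')^{-1}\bigl(\bar Q(\omega,0)-\bar Q(\omega,\bar R_r(\omega))\bigr)$.
The algebra is right, but the estimate you need is $\|(I-\mathcal A')^{-1}\|\le \text{const}$ \emph{uniformly over $\omega$ and $r$}, and this is exactly what is not justified. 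Here $\mathcal A'=\mathcal A_{IN_r(\omega)}\cdots\mathcal A_1$ depends on both the realized proportions $\mathbf r=\mathbf r(\omega)$ (which can sit anywhere in $\mathcal R$, including very small positive values) and on the random number of table cycles $N_r(\omega)$. Lemma \ref{lem:PRC} shows $\varrho(\mathcal A')<1$ for each fixed $(L,\mathbf r)$, but gives no lower bound on $1-\varrho(\mathcal A')$ as $\mathbf r$ ranges over $\mathcal R$ and $L$ over $\NN$ — and indeed if the per-stage reductions become small the product of $\mathcal A_i$'s approaches the identity, and $\|(I-\mathcal A')^{-1}\|$ can blow up. Your remark that ``the geometric decay absorbs the number of cycles'' does not actually produce a uniform constant: the contraction rate itself is realization-dependent. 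A related issue is the propagation step: once the initial points are within $O(r)$, the busy times of $q^\omega$ at each stage are $r_i q^\omega_{p(i)}(a_i^\omega)/(\mu_{p(i)}-\lambda_{p(i)})$, which differ from the realized busy times of $\bar Q$ at \emph{every} stage, so the switching epochs drift stage by stage; controlling the sup-norm distance over a cycle then again requires a realization-uniform Lipschitz/contraction bound.

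The paper's construction sidesteps all of this. Instead of matching proportions, it fixes $\bar Q'(0)=\bar Q(0)$ and modifies the realized \emph{busy times} at only $K$ stages (one per queue) by the amounts $\delta_k=\bar{\mathcal B}_k(\bar R_r)-\rho_k\tau_{N_r}$, which are shown in \eqref{eq:85} to be $O(r)$ with explicit constants depending only on $\mu_k,\rho_k,\rho$. Because only $K$ busy times are altered and each by $O(r)$, the cumulative drift in the trajectory is bounded by $K\sum_k|\delta_k|\max_k\mu_k=O(r)$, giving $d_2$ explicitly and uniformly — no appeal to $(I-\mathcal A')^{-1}$, no dependence on $N_r(\omega)$ or on the realized $\mathbf r$. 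It then adds the constant shift $\Delta$ to restore nonnegativity and produce a bona-fide PE-candidate. If you want to pursue your approach, you would need to first establish a uniform bound on $\|(I-\mathcal A')^{-1}\|$ over all $(L,\mathbf r)$ that can arise on the event $\{\bar Q(0),\bar Q(\bar R_r)\in\sB(r)\}$, which is not obviously true and would likely require reintroducing exactly the kind of explicit flow-balance bookkeeping the paper uses.
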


Consider the set $E$ in Lemma \ref{lem:NearlyPeriodic}, and fix $\omega \in E$. Assume that
$\bar R_{r}(\omega) \geq \tau_{N_{r}(\omega)}$; similar arguments to those below hold for the case $\bar R_{r}(\omega) < \tau_{N_{r}(\omega)}$.

Clearly, \eqref{eq:60} implies that
\begin{equation} \label{eq:62}
\max_{ k \in \mathcal{K}} |\bar Q_k(\omega, t) - q_k^\omega(t)| \leq d_2 r  \quad \text{for all } t \in [0, \bar R_{r}(\omega)].
\end{equation}
Now, $||\bar Q(\omega, 0) - B^o|| \leq r$ because $\bar Q(\omega, 0) \in \sB(r)$, so that
\begin{equation*}
\max_{ k \in \mathcal{K}} \bar Q_k(\omega,0) \leq \max_{ k \in \mathcal{K}} B^o_k + r.
\end{equation*}
Then \eqref{eq:60} implies that
$$\max_{ k \in \mathcal{K}} q_k^\omega(0) \leq \max_{ k \in \mathcal{K}} B^o_k + r + d_2 r,$$ and in turn, for all $t \in [0, \bar R_{r}(\omega)]$,
\begin{equation} \label{eq:63}
\begin{split}
\max_{ k \in \mathcal{K}} q_k^\omega(t) &\leq \max_{ k \in \mathcal{K}} q_k^\omega(0) + \max_{ k \in \mathcal{K}} \{\lambda_{k}(1-\rho_k)\} \tau_{N_{r}(\omega)} \\
&\leq \max_{ k \in \mathcal{K}} B^o_k  + r + d_2 r + \max_{ k \in \mathcal{K}} \{\lambda_{k}(1-\rho_k)\} \tau_{N_{r}(\omega)}.
\end{split}
\end{equation}
Together with \eqref{eq:62}, \eqref{eq:63} implies that for all $t \in [\tau_{N_{r}(\omega)}, \bar R_{r}(\omega)]$,
\begin{equation} \label{eq:65}
\begin{split}
\frac{1}{\tau_{N_{r}}} \max_{ k \in \mathcal{K}}  \bar Q_k(\omega, t)
&\leq \frac{1}{\tau_{N_{r}(\omega)}}  \of{\max_{ k \in \mathcal{K}} B^o_k + r + 2d_2 r } + \max_{ k \in \mathcal{K}} \{\lambda_{k}(1-\rho_k)\} \\
& \leq \frac{1}{\tau_{1}}  \of{\max_{ k \in \mathcal{K}} B^o_k + r + 2d_2 r } +  \max_{ k \in \mathcal{K}} \{\lambda_{k}(1-\rho_k)\} \\
&= d_3 r + d_4,
\end{split}
\end{equation}
where $d_3$ and $d_4$ are the following constants (that do not depend on $\omega$)
\begin{equation*}
d_3 := \frac{1}{\tau_{1}}  (1+2d_2) \quad \text{and} \quad d_4 := \frac{1}{\tau_{1}} \max_{ k \in \mathcal{K}} B^o_k + \max_{ k \in \mathcal{K}} \{\lambda_{k}(1-\rho_k)\} .
\end{equation*}
Thus, for any $\omega \in E$, it holds that
\begin{equation} \label{eq:64}
\begin{split}
&\max_{ k \in \mathcal{K}} \left| \frac{1}{\bar R_{r}(\omega) } \int_{0}^{\bar R_{r}(\omega)}  \bar Q_{k}(\omega, u) du - \frac{1}{\tau_{N_{r}(\omega)}}
\int_{0}^{\tau_{N_{r}(\omega)}}  q_{k}^\omega(u)  du \right| \\
&= \max_{ k \in \mathcal{K}}  \left| \frac{1}{\bar R_{r}(\omega) } \of{\int_{0}^{\tau_{N_{r}}(\omega)}  \bar Q_{k}(\omega, u) du
	+ \int_{\tau_{N_{r}(\omega)}}^{\bar R_{r}(\omega)}  \bar Q_{k}(\omega, u) du }
- \frac{1}{\tau_{N_{r}(\omega)}}  \int_{0}^{\tau_{N_{r}(\omega)}}  q_{k}^\omega(u)  du \right| \\
& \leq \max_{ k \in \mathcal{K}} \of{\frac{1}{\tau_{N_{r}(\omega)}}  \int_{0}^{\tau_{N_{r}(\omega)}} | \bar Q_{k}(\omega, u) - q_{k}^\omega( u) | du
	+  \frac{1}{\tau_{N_{r}(\omega)}}  \int_{\tau_{N_{r}(\omega)}}^{\bar R_{r}(\omega)}  \bar Q_{k}(\omega,u) du } \\
& \leq  d_2 r + \of{d_3 r + d_4} d_1 r,
\end{split}
\end{equation}
where the last inequality follows from \eqref{eq:62}, statement (i) in Lemma \ref{lem:NearlyPeriodic}, and \eqref{eq:65}.
It follows from \eqref{eq:64} that, for any $\ep > 0$, there exists $r_\ep > 0$ (that does not depend on $\omega$), such that for all $r < r_\ep$,
\begin{equation} \label{eq:Bd}
\max_{ k \in \mathcal{K}} \left| \frac{1}{\bar R_{r}(\omega) } \int_{0}^{\bar R_{r}(\omega)}  \bar Q_{k}(\omega, u) du - \frac{1}{\tau_{N_{r}(\omega)}}
\int_{0}^{\tau_{N_{r}(\omega)}} q_{k}^\omega(u)  du \right| < \ep.
\end{equation}

Since $q^\omega$ is bounded, \eqref{eq:60} implies that $\bar Q(\omega, \cdot)$ is bounded as well.
Therefore, due to the continuity of $\psi$, $q^\omega$, and of the sample path $\bar Q(\omega, \cdot)$,
the composite functions $\psi \circ q^\omega$ and $\psi \circ \bar Q(\omega, \cdot)$ are both uniformly continuous over compact time intervals.
It therefore follows from \eqref{eq:Bd} that for any $\delta > 0$, there exists an $\epsilon >0$ and a corresponding $r_\ep >0$, such that
\begin{equation*}
\left| \frac{1}{\bar R_{r}(\omega)} \int_{0}^{\bar R_{r}(\omega)}  \psi( \bar Q(\omega, u) ) du
- \frac{1}{\tau_{N_{r}(\omega)}} \int_{0}^{\tau_{N_{r}(\omega)}} \psi( q^\omega(u) ) du \right| < \delta, \qforallq r \in (0, r_\ep),
\end{equation*}
so that
\begin{equation} \label{eq:73}
\frac{1}{\bar R_{r}(\omega)} \int_{0}^{\bar R_{r}(\omega)}  \psi( \bar Q(\omega, u) ) du > c^\omega - \delta \geq c_* - \delta,
\end{equation}
where
\begin{equation*}
c^\omega := \frac{1}{\tau_{N_{r}(\omega)}} \int_{0}^{\tau_{N_{r}(\omega)}} \psi( q^\omega(u) ) du
\end{equation*}
is the time-average holding cost of $q^\omega$, and is necessarily no smaller than $c_*$ by the definition of the latter term.
Hence, due to the regenerative structure of $\bar Q^\ell$ for all $\ell \ge 1$, we have
(considering the random elements, and thus dropping $\omega$ from the notation)
\begin{align*} \label{eq:19}
\liminf_{\ell \arr \infty} \lim_{t \arr \infty} \bar C^\ell (t)
&=  \liminf_{{\ell} \arr \infty} \lim_{t \arr \infty} \frac{1}{t} \int_{0}^t \psi \of{\bar Q^\ell(u)}du \\
&= \liminf_{{\ell} \arr \infty} \frac{\E_{\alpha_{r}^\ell} \off{\int_{0}^{\bar R^\ell_{r} } \psi\of{\bar Q^\ell(u)} du}}{\E_{\alpha_{r}^\ell} \off{\bar R^\ell_{r} }} \quad w.p.1 \mbox{ by renewal-reward theorem} \\
&\geq  \frac{\liminf\limits_{\ell \arr \infty} \E_{\alpha_{r}^\ell} \off{\int_{0}^{\bar R^\ell_{r} } \psi\of{\bar Q^\ell(u)} du}}{\limsup\limits_{\ell \arr \infty}\E_{\alpha_{r}^\ell} \off{\bar R^\ell_{r} }} \\
&=  \frac{\liminf\limits_{\ell \arr \infty} \E_{\alpha_{r}^\ell} \off{\int_{0}^{\bar R^\ell_{r} } \psi\of{\bar Q^\ell(u)} du}}{\E_{\alpha_{r}} \off{\bar R_r }} \quad \mbox{by Lemma \ref{lem:1}}\\
&\geq  \frac{ \E_{\alpha_{r}} \off{\liminf\limits_{\ell \arr \infty} \int_{0}^{\bar R^\ell_{r^\ell} } \psi\of{\bar Q^\ell(u)} du}}{\E_{\alpha_{r}} \off{\bar R_r }} \quad \mbox{by Fatou's lemma}\\
&=  \frac{ \E_{\alpha_{r}} \off{ \of{\frac{1}{\bar R_r} \int_{0}^{\bar R_r } \psi\of{\bar Q(u)} du} \bar R_r}}{\E_{\alpha_{r}} \off{\bar R_r}} \\
&>  \frac{ \E_{\alpha_{r}} \off{ (c_* - \delta) \bar R_r}}{\E_{\alpha_{r}} \off{\bar R_r }} \quad \text{on the event $E$ by } \eqref{eq:73} \\
&= c_* - \delta.
\end{align*}
Note that the second equality above holds regardless of whether $\E_{\alpha_{r}^\ell} \off{\int_{0}^{\bar R^\ell_{r} } \psi\of{\bar Q^\ell(u)} du} < \infty$
because $\psi$ is nonnegative; see, e.g., Theorem 2.2.1 and the corresponding remark on p.42 in \cite{tijms2003first}.
The result follows because $\delta$ is arbitrary \hfill \qed

\subsubsection{Proofs of the Auxiliary Results in the Proof of Theorem \ref{thm:AsympLB}} \label{SecProofOfRn}

\begin{proof} [Proof of Lemma \ref{lem:1}]
The weak convergence in the statement follows from the continuous mapping theorem applied to the first passage time \citep[Theorem 13.6.4]{whitt2002stochastic}.
To prove the convergence of the means,
let $\bar Q^{\ell}(0)$ be distributed according to $\alpha^{\ell}$,
and $\bar Q(0)$ be distributed according to $\alpha$, for $\alpha^\ell$ and $\alpha$ in \eqref{alphas}.

The length of the return time $\bar R^{\ell}_{r} $ consists of the total time the server spends serving each queue $k$, $k \in \mathcal{K}$, plus the total switchover time in $ N^{\ell}_{r}$ table cycles. Let $G_k^{\ell}$ denote the number of customers served at queue $k$ over the time interval $[0, \bar R^{\ell}_{r} ] $, and $\bar G_k^{\ell} := G_k^{\ell}/\ell$. It holds that
\begin{equation} \label{eq:82}
\bar Q^{\ell}_k(0) + \bar P_k^{\ell} \of{ \frac{1}{\ell} \sum_{\nu=1}^K \sum_{j=1}^{ G_\nu^{\ell}} S_\nu^{(j)} + \frac{1}{\ell}\sum_{\nu = 1}^{N^{\ell}_{r}}  \sum_{i=1}^{I} V_i^{(\nu),\ell}} - \bar G_k^{\ell} = \bar Q^{\ell}_k(\bar R^{\ell}_{r}) , \quad k \in \mathcal{K} ,
\end{equation}
where
$$\bar R^{\ell}_{r} = \frac{1}{\ell} \sum_{\nu=1}^K \sum_{j=1}^{ G_\nu^{\ell}} S_\nu^{(j)} + \frac{1}{\ell}\sum_{\nu = 1}^{N^{\ell}_{r}}  \sum_{i=1}^{I} V_i^{(\nu),\ell}.$$
Since $\bar Q$ is stationary, both $\bar Q^\ell (0)$ and $\bar Q^\ell(\bar R^\ell)$ are distributed according to $\af_r^\ell$, so that
$$\E_{\alpha_{r}^\ell} \off{\bar Q^{\ell}_k(0)} = \E_{\alpha_{r}^\ell} \off{\bar Q^{\ell}_k(\bar R^{\ell}_{r})}, \quad k \in \mathcal{K}.$$
Thus, taking expectations in \eqref{eq:82} and applying Wald's equation give
\begin{equation*} \label{eq:42}
\begin{split}
\E_{\alpha_{r}^\ell} \off{\bar G_k^{\ell} }
=\lambda_k \of{ \sum_{\nu=1}^K \frac{1}{\mu_\nu} \E_{\alpha_{r}^\ell} \off{\bar G_\nu^{\ell}} + \E_{\alpha_{r}^\ell} \off{N^{\ell}_{r} } s } , \quad k \in \mathcal{K}.
\end{split}
\end{equation*}
It follows that $\E_{\alpha_{r}^\ell} \off{\bar G_k^{\ell} }  =  \frac{\lambda_k s }{1-\rho} \E_{\alpha_{r}^\ell} \off{N^{\ell}_{r}}$, so that
\begin{equation} \label{eq:barRnk}
\E_{\alpha_{r}^\ell} \off{\bar R_r^{\ell} }  = \frac{s }{1-\rho} \E_{\alpha_{r}^\ell} \off{N^{\ell}_{r}}  , \quad k \in \mathcal{K}.
\end{equation}

Similar flow equation holds for the subsequential limit process $\bar Q$. Since the sample paths of $\bar Q$ are of the form \eqref{fluid implicit} by Lemma \ref{lem:Tightness}, the process $\bar Q$ satisfies
\begin{equation*}
\bar{Q}_k(t) = \bar{Q}_k(0) + \lm_k t - \mu_k \bar{\mathcal B}_k(t), \quad t \ge 0, \quad  k \in \mathcal{K},
\end{equation*}
where $\bar{\mathcal B}_k := \{\bar{\mathcal B}_k(t) : t \ge 0\}$ is of the form
\begin{equation} \label{eq:84}
\bar{\mathcal B}_k(t) = \int_{0}^{t} b_k(u)du ,
\end{equation}
for a piecewise-constant function $b_k : \RR_+ \ra \{0,1\}$.
Then, by definition of $\bar R_r$, we have
\begin{equation} \label{fluidLimOfTight}
\bar{Q}_k(\bar R_r) = \bar{Q}_k(0) + \lm_k \bar R_r - \mu_k \bar{\mathcal B}_k(\bar R_r), \quad  k \in \mathcal{K},
\end{equation}
where $ \bar R_r =  \sum_{k=1}^K \bar{\mathcal B}_k(\bar R_r) + \bar N_r s$.
As both $\bar{Q}_k(0)$ and $\bar{Q}_k(\bar R_r)$ are distributed according to $\alpha$,
it holds that $\E_{\alpha_{r}}\off{\bar{Q}_k(0)} = \E_{\alpha_{r}} \off{\bar{Q}_k(\bar R_r)}$, and therefore
\begin{equation*}
\lambda_k \E_{\alpha_{r}} \off{\sum_{\nu=1}^K \bar{\mathcal B}_\nu(\bar R_r) +  N_r s } = \mu_k \E_{\alpha_{r}} \off{\bar{\mathcal B}_k(\bar R_r)}, \quad k \in \mathcal{K}.
\end{equation*}
In turn, $\E_{\alpha_{r}} \off{\bar{\mathcal B}_k(\bar R_r)} = \frac{\rho_k s }{1-\rho} \E_{\alpha_{r}} \off{N_r}$, so that
\begin{equation} \label{eq:Rr}
\E_{\alpha_{r}} \off{\bar R_r} = \frac{s }{1-\rho} \E_{\alpha_{r}} \off{N_r}  , \quad k \in \mathcal{K} .
\end{equation}
Since $\E_{\alpha_{r}^\ell}\off{N^{\ell}_{r}} \arr \E_{\alpha_{r}}\off{ N_r}$ as $\ell \arr \infty$ by \eqref{limBallMeasure},
it follows from \eqref{eq:barRnk} and \eqref{eq:Rr} that $\E_{\alpha_{r}^\ell} \off{\bar R^{\ell}_{r} } \arr \E_{\alpha_{r}} \off{\bar R_r}$ as $\ell \arr \infty$,
and the result follows.
Finally, since $\bar R_r \ge 0$ and $\bar R^\ell_r \ge 0$ for all $\ell \ge 1$ w.p.1, the sequence $\{\bar R^\ell_r : \ell \ge 1\}$ is UI by Theorem 5.4 in \cite{billingsley1968convergence}.
\end{proof}


\begin{proof} [Proof of Lemma \ref{lem:NearlyPeriodic}]
We prove the two assertions of the lemma separately.
\paragraph*{Proof of $\boldsymbol{(i)}$}
By \eqref{fluidLimOfTight} and the fact that $\|\bar Q(\bar R_{r}) - \bar Q(0)\| \leq 2r$, it holds that for each $k \in \mathcal{K}$, 	
\begin{equation}  \label{eq:119}
\begin{gathered}
-2r \,\, \leq \,\, -\mu_k \bar{\mathcal B}_k(\bar R_{r}) + \lambda_k \bar R_{r} \,\, \leq \,\, 2r,\\
\end{gathered}
\end{equation}
so that
\begin{equation}  \label{eq:66}
\sum_{k \in \mathcal{K}} (-2r/\mu_k +  \rho_k \bar R_{r}) \,\, \leq \,\, \sum_{k \in \mathcal{K}} \bar{\mathcal B}_k(\bar R_{r}) \,\, \leq \,\,
\sum_{k \in \mathcal{K}} (2r/\mu_k +  \rho_k \bar R_{r}) \quad w.p.1.
\end{equation}
Since $\bar R_{r}$ is the total length of the $N_{r}$ table cycles, it equals the total time the server spends switching,
which is equal to $s N_{r}$, and the total time it spends serving in each of the queues. Hence,
\begin{equation*}
\bar R_{r} = s N_{r} + \sum_{k \in \mathcal{K}} \bar{\mathcal B}_k (\bar R_{r}) .
\end{equation*}
It then follows from \eqref{eq:66} that
\begin{equation*}
\begin{split}
s N_{r} + \sum_{k \in \mathcal{K}}  \of{-2r/\mu_k +  \rho_k \bar R_{r} } \,\, & \leq
\,\, \bar R_{r}  \,\, \leq \,\, s N_{r} + \sum_{k \in \mathcal{K}} \of{2r/\mu_k +  \rho_k \bar R_{r} } \quad \text{so that }\\
- 2r \sum_{k \in \mathcal{K}} \frac{1}{\mu_k} \,\, & \leq \,\, (1-\rho) \bar R_{r} - s N_{r}  \,\, \leq \,\, 2r \sum_{k \in \mathcal{K}} \frac{1}{\mu_k},
\end{split}
\end{equation*}
and employing \eqref{eq:57} gives
\bequ  \label{eq:67}
- \frac{2r}{1-\rho} \sum_{k \in \mathcal{K}} \frac{1}{\mu_k} \,\, \leq
\,\,  \bar R_{r} - \tau_{N_{r}}  \,\, \leq  \,\, \frac{2r}{1-\rho} \sum_{k \in \mathcal{K}} \frac{1}{\mu_k} .
\eeq
Taking $d_1 := 2(1-\rho)^{-1} \sum_{k \in \mathcal{K}} 1/\mu_k $ proves the first part of the lemma.

\paragraph*{Proof of $\boldsymbol{(ii)}$}
We show that \eqref{eq:60} holds w.p.1, so that the event $E$ in the statement exists. To this end, we fix $\omega \in \Omega$,
and prove the result by constructing a $N_r(\omega)$-cycle PE-candidate $q^\omega$ such that \eqref{eq:60} holds for the sample path $\bar Q(\omega, \cdot)$.
To simplify the notation, the values of all the random elements (variables and processes) below are assumed to be realizations corresponding to
that fixed $\omega$, although we remove it from the notation (except for the PE-candidate $q^\omega$ we construct).

For the limiting process $\bar Q$,
let	$\bar B^{(m)}_{k_j}$, $j = 1,..., dim(\mathbf k)$, $m = 1,...,N_r$, denote the busy time spent serving queue $k$ at stage $ k_j$
in the $m$th server cycle. By definition,
\begin{equation*}
\bar{\mathcal B}_{k}(\bar R_{r}) = \sum_{m =1}^{N_{r}} \sum_{j=1}^{dim(\mathbf k)} \bar B^{(m)}_{ k_j} , \quad k \in \mathcal{K} ,
\end{equation*}
for $\bar{\mathcal B}_{k}$ in \eqref{eq:84}.
It follows from \eqref{eq:119} and \eqref{eq:67} that for $k \in \mathcal{K}$,
\begin{equation*}
- \frac{2r}{\mu_k} +  \rho_k \of{\tau_{N_{r}}   - \frac{2r}{1-\rho} \sum_{k \in \mathcal{K}} \frac{1}{\mu_k}}  \, \leq \, \bar{\mathcal B}_{k}(\bar R_{r}) \, \leq
\, \frac{2r}{\mu_k} +  \rho_k \of{\tau_{N_{r}}  + \frac{2r}{1-\rho} \sum_{k \in \mathcal{K}} \frac{1}{\mu_k}} ,
\end{equation*}
so that, for $\delta_{k} := \bar{\mathcal B}_{k}(\bar R_{r}) - \rho_k \tau_{N_{r}} $, it holds that
\begin{equation} \label{eq:85}
|\delta_{k}| \leq 2r \of{\frac{1}{\mu_k} + \frac{\rho_k}{1-\rho} \sum_{k \in \mathcal{K}} \frac{1}{\mu_k}}, \quad k \in \mathcal K.
\end{equation}

The proof proceeds by explicitly constructing $q^\omega$.
To this end, we first characterize a $N_r$-cycle closed-curve in $\RR^K$, denoted by $\bar Q'$,
whose trajectory is sufficiently close to the sample path of $\bar Q$ (corresponding to the sample point $\omega$).
However, that closed curve $\bar Q'$ is not necessarily a PE-candidate, as its components may achieve negative values.
We then show that only a small perturbation of the trajectory $\bar Q'$, such that the perturbed trajectory remains sufficiently close to $\bar Q$,
produces a bona-fide $N_r$-cycle PE-candidate $q^\omega$.

To construct $\bar Q'$, we first take $\bar Q'(0) := \bar Q(0)$, and then specify the busy times of $\bar Q'$, such that $\bar Q'(\tau_{N_r}) = \bar Q'(0)$.
(We treat $\bar Q'$ as a queue process, similarly to our treatment of the fluid models. Thus, by ``busy times'' of $\bar Q'_k$
we mean the times at which the $k$th component of $\bar Q'$ is decreasing.)
This can be easily achieved by solving {\em flow balance equations} which equate the ``inflow'' to $\bar Q'$ over the $N_r$ table cycles,
which occurs at a constant rate $\lm_k$ throughout, to the ``outflow'' over the $N_r$ table cycles, which occurs at constant rate
$- \mu_k$ only during the busy times.
Let $(\bar B_{ k_j}^{'(m)}, j = 1,..., dim(\mathbf k), m = 1,...,N_{r})$ denote those busy times of $\bar Q'_k$.

(1) For queue $k$ with $\delta_{k} < 0$, we take
$\bar B_{ k_1}^{'(1)} := \bar B_{ k_1}^{(1)} + |\delta_{k}|$;
and $\bar B_{k_j}^{'(m)} := \bar B_{ k_j}^{(m)}$, for $j = 1,..., dim(\mathbf k)$, $m = 1,..., N_{r}$, and $j + m > 2$.
Thus, except for its first busy time, all other busy times of $\bar Q'_k$ are equal to those of $\bar Q_k$.

(2) For queue $k$ with $\delta_{k} > 0$, we take $\bar B_{ k_{\hat j}}^{'(\hat{m})} := \bar B_{ k_{\hat j}}^{(\hat{m})} - \delta_{k}$
for some $\hat j \in \{1,..., dim(\mathbf k)\}$ and $\hat m \in \{1,...,N_{r}\}$ with $\bar B_{ k_{\hat j}}^{(\hat{m})} \geq \delta_{k}$.
(Such  ${\hat j}$ and $\hat{m}$ exist for sufficiently small $r$ due to \eqref{eq:66} and \eqref{eq:67}.)
and as $\bar B_{ k_j}^{'(m)} := \bar B_{ k_j}^{(m)}$ for $j = 1,..., dim(\mathbf k)$, $m = 1,...,N_{r}$, $j \neq \hat j$, and $m \neq \hat m$.
Thus, $\bar Q'_k$ and $\bar Q_k$ have the same busy times, except for one busy time, which is shorter for $\bar Q'_k$ by $\delta_k$.

(3) For queue $k$ with $\delta_{k} = 0$,  we take $\bar B_{ k_j}^{'(m)} := \bar B_{k_j}^{(m)}$ for all $j = 1,..., dim(\mathbf k)$ and $m = 1,...,N_{r}$.
In particular, $\bar Q'_k$ and $\bar Q_k$ have the same busy times.

Observe that the  busy times of $\bar Q'$ satisfy the flow balance at all queues, i.e.,
\begin{equation*}
\sum_{m =1}^{N_{r}} \sum_{j=1}^{dim(\mathbf k)} \bar B^{'(m)}_{ k_j} = \rho_k \tau_{N_{r}} \quad k \in \mathcal{K} .
\end{equation*}
so that $\bar Q'( \tau_{N_{r}}) = \bar Q'(0)$.
In addition, we will show that
\begin{equation} \label{eq:69}
||\bar Q - \bar Q'||_{ \bar R_{r} \vee \tau_{N_{r}}  } \leq  \of{\frac{2rK}{1-\rho} \sum_{k \in \mathcal{K}} \frac{1}{\mu_k} } \max_{k \in \mathcal{K}} \offf{\mu_k} .
\end{equation}
However, before proving \eqref{eq:69} we show that we can use this inequality to construct a PE-candidate $q^\omega$ as in the statement of the lemma.
To this end, let
\begin{equation} \label{eq:68}
q^\omega := \bar Q' + \Delta \qforq \Delta := \of{\frac{2rK}{1-\rho} \sum_{k \in \mathcal{K}} \frac{1}{\mu_k} } \max_{k \in \mathcal{K}} \offf{\mu_k}.
\end{equation}
Since $\bar Q \geq 0$, it follows from \eqref{eq:69} that $q^\omega \geq 0$, and since $\bar Q'$ is a closed curve in $\RR^K$, so is $q^\omega$.
Finally, $q^\omega$ clearly satisfies the fluid model equations \eqref{fluid implicit}, and is therefore a bona-fide PE-candidate.
Combining \eqref{eq:69} and \eqref{eq:68} gives
\begin{equation*}
|| \bar Q - q^\omega ||_{\bar R_{r} \vee \tau_{N_{r}}  } \leq \of{\frac{4rK}{1-\rho} \sum_{k \in \mathcal{K}} \frac{1}{\mu_k} } \max_{k \in \mathcal{K}} \offf{\mu_k},
\end{equation*}
so that \eqref{eq:60} follows by setting $d_2 := \of{\frac{4K}{1-\rho} \sum_{k \in \mathcal{K}} \frac{1}{\mu_k} } \max_{k \in \mathcal{K}} \offf{\mu_k}$.

To finish the proof of the lemma, it remains to justify \eqref{eq:69}.
To this end, note that $\bar Q'$ and $\bar Q$ follow identical trajectories from initialization until some busy time differs,
namely, when $\bar B^{'(m)}_{ \ell_j} \neq \bar B^{(m)}_{ \ell_j}$,
for some $\ell \in \mathcal{K}$, $j \in \{1,..., dim(\mathbf k)\}$, and $m \in \{1,...,N_{r}\}$.
By construction, $|\bar B^{'(m)}_{ \ell_j} - \bar B^{(m)}_{ \ell_j}| = |\delta_{\ell}|$.
Since queue $k$ in either process decreases at rate $\mu_k - \lambda_k$ during the busy times, and increases at rate $\lambda_k$ everywhere else,
it holds that
\begin{equation} \label{eq:3}
|| \bar Q_{k} - \bar Q_{k}' ||_{\bar D^{(m)}_{ \ell_j} \vee \bar D^{'(m)}_{ \ell_j} }
= |\delta_{\ell}| \mu_k , \quad k \in \mathcal{K},
\end{equation}
where $\bar D^{'(m)}_{ \ell_j}$ (alternatively, $\bar D^{(m)}_{ \ell_j}$)
is the departure epoch immediate after the busy time $\bar B^{'(m)}_{ \ell_j}$
(alternatively, $\bar B^{(m)}_{\ell_j}$) in $\bar Q'$ (alternatively, $\bar Q$).
Then \eqref{eq:3} implies that
\begin{equation} \label{eq:4}
|| \bar Q - \bar Q' ||_{\bar D^{(m)}_{ \ell_j} \vee \bar D^{'(m)}_{ \ell_j} }  \leq   |\delta_{\ell}| \max_{k \in \mathcal{K}} \offf{\mu_k} .
\end{equation}
After that departure epoch (i.e., $\bar D^{'(m)}_{ \ell_j}$ for $\bar Q'$, and $\bar D^{(m)}_{ \ell_j}$ for $\bar Q$),
the trajectories of $\bar Q'$ and $\bar Q$ increase and decrease at the same rate over the same time intervals, until another busy time differs, i.e.,
$\bar B^{'(\hat{m})}_{\hat{\ell}_{\hat{j}}} \neq \bar B^{(\hat{m})}_{\hat{\ell}_{\hat{j}}}$,
for some $\hat{\ell} \in \mathcal{K}$ ($\hat{\ell} \neq \ell$), $\hat{j} \in \{1,..., dim(\boldsymbol{\hat{\ell}})\}$, and $\hat{m} \in \{1,...,N_{r}\}$.
Following similar arguments as above, the second difference in the busy times can {\em further} enlarge the distance between $\bar Q'$ and $\bar Q$
(from time zero to the departure epoch after the busy time in consideration) component wise by a maximum of
$(|\delta_{\ell}| + |\delta_{\hat{\ell}}|) \max_{k \in \mathcal{K}} \offf{\mu_k}$.
In particular, define
\begin{equation} \label{eq:81}
\tilde Q' := \bar Q' - \of{\bar Q'(\bar D^{(m)}_{ \ell_j} \vee \bar D^{'(m)}_{\ell_j}) - \bar Q(\bar D^{(m)}_{ \ell_j} \vee \bar D^{'(m)}_{ \ell_j}) } .
\end{equation}
(Note that $\tilde Q'$ is the trajectory ``shifted" from $\bar Q'$, so that $\tilde Q'(\bar D^{(m)}_{ \ell_j} \vee \bar D^{'(m)}_{ \ell_j}) = \bar Q(\bar D^{(m)}_{ \ell_j} \vee \bar D^{'(m)}_{ \ell_j})$.)
It holds that
\begin{equation} \label{eq:83}
\sup_{t \, \in \, [ \bar D^{(m)}_{ \ell_j} \vee \bar D^{'(m)}_{ \ell_j} , \, \bar D^{(\hat{m})}_{\hat{\ell}_{\hat{j}}} \vee \bar D^{'(\hat{m})}_{\hat{\ell}_{\hat{j}}} ]} \, \max_{ k \in \mathcal{K}} \, | \bar Q_k(t) - \tilde Q'_k(t) |  \leq    (|\delta_{\ell}| + |\delta_{\hat{\ell}}|) \max_{k \in \mathcal{K}} \offf{\mu_k} .
\end{equation}
This fact in \eqref{eq:83}, together with \eqref{eq:4} and \eqref{eq:81}, gives
\begin{equation} \label{eq:88}
\sup_{t \, \in \, [ \bar D^{(m)}_{ \ell_j} \vee \bar D^{'(m)}_{ \ell_j} , \, \bar D^{(\hat{m})}_{\hat{\ell}_{\hat{j}}} \vee \bar D^{'(\hat{m})}_{\hat{\ell}_{\hat{j}}} ]} \, \max_{ k \in \mathcal{K}} \, | \bar Q_k(t) - \bar Q'_k(t) |  \leq    (2|\delta_{\ell}| + |\delta_{\hat{\ell}}|) \max_{k \in \mathcal{K}} \offf{\mu_k} .
\end{equation}
Since the right-hand side of \eqref{eq:88} is larger than that of \eqref{eq:4}, we have that
\begin{equation*}
|| \bar Q - \bar Q' ||_{\bar D^{(\hat{m})}_{\hat{\ell}_{\hat{j}}} \vee \bar D^{'(\hat{m})}_{\hat{\ell}_{\hat{j}}}  }
\leq  \of{2|\delta_{\ell}| + |\delta_{\hat{\ell}}|} \max_{k \in \mathcal{K}} \offf{\mu_k} .
\end{equation*}
The same arguments continue to the end of the $N_{r}$th server cycle. Therefore,
\begin{equation*}
\begin{split}
|| \bar Q - \bar Q' ||_{\bar R_{r} \vee \tau_{N_{r}}  }  &\leq \sum_{k \in \mathcal{K}} K |\delta_{k}| \max_{k \in \mathcal{K}} \offf{\mu_k} \\
&\leq K \sum_{k \in \mathcal{K}} 2r \of{\frac{1}{\mu_k} + \frac{\rho_k}{1-\rho} \sum_{k \in \mathcal{K}} \frac{1}{\mu_k}} \max_{k \in \mathcal{K}} \offf{\mu_k}
\quad \text{by } \eqref{eq:85} \\
&= \of{\frac{2rK}{1-\rho} \sum_{k \in \mathcal{K}} \frac{1}{\mu_k} } \max_{k \in \mathcal{K}} \offf{\mu_k}. \qedhere
\end{split}
\end{equation*}
\end{proof}

\subsection{Proof of Theorem \ref{thm:AsympOpt}} \label{secProofThm4}
Consider a sequence of systems operating under the sequence of binomial-exhaustive policies $\boldsymbol{\pi}_*$, each with parameters $(L_*, \mathbf r_*)$.
For each $n \geq 1$, let $\bar M^n(t) := \max\{m \geq 1 : \bar U^{(m),n} \leq t \}$. Then
\begin{align}\label{eq:11}
\lim_{t\tinf} \bar C_{\pi_*^n}^n(t) &= \lim_{t\tinf} \frac{1}{t} \int_0^t \psi(\bar Q_{\pi_*^n}^n(u)) du \nonumber \\
& = \lim_{t\tinf} \frac{\sum_{m = 1}^{\bar M^n(t)} \int_{\bar U^{(m-1),n}}^{\bar U^{(m),n}} \psi(\bar Q_{\pi_*^n}^n(u)) du
	+ \int_{\bar U^{(\bar M^n(t)),n}}^{t}\psi(\bar Q_{\pi_*^n}^n(u)) du }{ \sum_{m = 1}^{\bar M^n(t)} \bar T^{(m),n} +  (t - \bar U^{(\bar M^n(t)),n}) } \nonumber\\
& = \lim_{t\tinf} \frac{ \frac{1}{\bar M^n(t)} \sum_{m = 1}^{\bar M^n(t)} \int_{\bar U^{(m-1),n}}^{\bar U^{(m),n}} \psi(\bar Q_{\pi_*^n}^n(u)) du
	+ \frac{1}{\bar M^n(t)}  \int_{\bar U^{(\bar M^n(t)),n}}^{t}\psi(\bar Q_{\pi_*^n}^n(u)) du }{ \frac{1}{\bar M^n(t)}  \sum_{m = 1}^{\bar M^n(t)} \bar T^{(m),n}
	+ \frac{1}{\bar M^n(t)}  (t - \bar U^{(\bar M^n(t)),n}) } \nonumber\\
& = \frac{\E_{\alpha^n}\off{\bar \Psi^n_{\pi^n_*}}}{\E_{\alpha^n}\off{\bar T_{L_*}^{n}}} \quad w.p.1,
\end{align}
for $\bar \Psi^n_{\pi^n_*}$ in \eqref{Psi}, where the last equality follows because the embedded DTMC converges to its unique stationary distribution $\alpha^n$,
and because the second terms on both the numerator and denominator converge to $0$ w.p.1.
Indeed, $\bar Q^n_{\pi_*^n}(\infty)$ is bounded and $0 \le t - \bar U^{(\bar M^n(t)), n} \le \bar T_{L_*}^{(\bar M^n(t)+1), n} < \infty$ w.p.1
by virtue of \eqref{eq:ExpeCycle}.

Lemma \ref{lem:interchange}, the continuity of $\psi$ and the continuous mapping theorem imply that
\bes
\bar \Psi^n_{\pi^n_*} \Ra \int_0^{\tau_*}\psi(q_*(u)) du \quad \text{in $\RR$ as } n \tinf.
\ees
Thus, since $\E_{\alpha^n}\off{\bar T_{L_*}^{n}} = \tau_*$ for all $n\ge 1$, due to \eqref{eq:57} and \eqref{eq:ExpeCycle},
the assumed UI of $\{\bar \Psi^n_{\pi_*^n} : n \geq 1\}$ and \eqref{eq:11} give
\begin{equation*}
\begin{split}
\lim_{n \arr \infty} \lim_{t \arr \infty} \bar C_{\pi_*^n}^n(t)
= \lim_{n \arr \infty} \frac{1}{\tau_{*}} \E_{\alpha^n}\off{\bar \Psi^n_{\pi_*^n}}
= \frac{1}{\tau_{*}} \int_{0}^{\tau_{*}} \psi\of{q_{*}(u)} du = c_* \quad w.p.1.  \hfill \qed
\end{split}
\end{equation*}

\subsection{Proof of Theorem \ref{th:MomentCheck}} \label{ap:MomentCheck}
Throughout this section, we consider a sequence of systems operating under the binomial-exhaustive policy with control parameters $(L, \mathbf r) \in \NN \times \mathcal R$ for each system $n$, and its fluid limit $q$ (established in Corollary \ref{cor:BinAdmissible}).
We denote the unique PE (the global limit cycle) of that fluid limit by $q_e$.

Before proving Theorem \ref{th:MomentCheck},
we state two technical lemmas---Lemma \ref{LEM:ASYMPQ2} and Lemma \ref{LEM:ASYMPB} below.
These two lemmas are proved in Theorem 4 and Proposition 6.1 in \cite{Hu20Momentarxiv}, and are restated here for completeness.
Recall that, if $\bar Q^n$ is stationary for each $n \ge 1$, then $\bar Q^n \Ra q_e$ in $D^K$ as $n\tinf$ by Lemma \ref{lem:interchange}.

\begin{lemma}
\label{LEM:ASYMPQ2}
Assume that, for all $n \ge 1$, $\bar Q^n(0) \deq \tilde Q^n(\infty)$, so that the process $Q^n$ is stationary. Then
\begin{enumerate}[(i)]
\item $\E\off{\bar Q_k^n (\bar A_i^n)} = q_{e,k}(a_i)$ for all $n \geq 1$, $k \in \mathcal{K}$, $i \in \mathcal{I}^L$.
\item  If
(a) $\E\off{S_k^2} < \infty$ for all $k \in \mathcal{K}$,
(b) $\E\off{(V_i^n)^2} < \infty$ for all $i \in \mathcal{I}^L$, $n \geq 1$,
and (c) $\E\off{(\bar V_i^n)^2} \arr s_i^2$ as $n \arr \infty$ for all $i \in \mathcal{I}^L$,
then $\E  \off{Q_k^n (A_i^n) Q_j^n (A_i^n)}  < \infty$ for all $n \geq 1$ and
$$\lim_{n\tinf} \E\off{\bar Q_k^n (\bar A_i^n) \bar Q_j^n (\bar A_i^n)  } = q_{e,k}(a_i) q_{e,j}(a_i),
\qforallq k, j \in \mathcal{K}, \text{ and } i \in \mathcal{I}^L.$$
\item If (a) for each $k\in \mathcal{K}$, there exists $\epsilon_k >0$ such that $\E\off{e^{t S_k}} < \infty$ for all $t \in (-\epsilon_k, \epsilon_k)$,
(b) $\E\off{e^{t V_i^n}} < \infty$ for all $t \in \RR_+$, $i \in \mathcal{I}^L$, $n \geq 1$, (c) $\E\off{(\bar V_i^n)^\ell} \arr s_i^\ell$ as $n \arr \infty$ for all $\ell \geq 3$, $i \in \mathcal{I}^L$,
then $\E\off{Q_k^n (A_i^n)^\ell} < \infty$ for all $n \geq 1$
and
$$\lim_{n\tinf} \E\off{\bar Q_k^n (\bar A_i^n)^\ell} = (q_{e,k}(a_i))^\ell \qforallq \ell \geq 3, \,\, k \in \mathcal{K}, \text{ and } i \in \mathcal{I}^L.$$
\end{enumerate}
\end{lemma}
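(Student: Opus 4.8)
The plan to prove Lemma~\ref{LEM:ASYMPQ2} is to set up, at each moment order, an \emph{exact} affine recursion for the stationary moments of $Q^n(A_i^n)$ across one stage, and to pin down its unique fixed point using the contraction bound $\varrho(\mathcal A')<1$ already established in the proof of Lemma~\ref{lem:PRC}. For part~(i), I would first note that the stationary mean $m_i^n:=\E[Q^n(A_i^n)]$ is finite: since $\rho<1$ and the service and switchover times have finite means, this is a standard consequence of the branching structure of the binomial-exhaustive policy. Next, using Definition~\ref{def:Binomial}, at stage $i$ the server leaves $Q_{p(i)}(A_i)-Y_i(Q_{p(i)}(A_i),r_i)$ in queue $p(i)$, the busy time equals $\sum_{\ell=1}^{Y_i}\Theta_{p(i)}^{(\ell)}$ with $\E[\Theta_{p(i)}]=1/(\mu_{p(i)}-\lambda_{p(i)})$, and every other queue receives only Poisson arrivals during the busy time and during the switchover $V_i^n$ (mean $ns_i$). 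Conditioning on $Q(A_i)$ and applying Wald's identity, together with the independence granted by Definitions~\ref{def:MarkovPolicy}(ii) and~\ref{def:Binomial}, yields $\E[Q^n(A_{i+1}^n)\mid Q^n(A_i^n)]=\mathcal A_i Q^n(A_i^n)+n\mathcal B_i$, with $\mathcal A_i,\mathcal B_i$ exactly the matrices in the proof of Lemma~\ref{lem:PRC}. Hence $m_{i+1}^n=\mathcal A_i m_i^n+n\mathcal B_i$, and stationarity forces $m_1^n$ to be a fixed point of the composed affine map $q\mapsto\mathcal A'q+n\mathcal B'$; since $\varrho(\mathcal A')<1$ this map is a contraction, its fixed point is unique, and because the PE satisfies $q_e(a_{i+1})=\mathcal A_i q_e(a_i)+\mathcal B_i$ the point $nq_e(a_1)$ is that fixed point. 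Propagating through the cycle gives $m_i^n=nq_e(a_i)$, i.e.\ $\E[\bar Q_k^n(\bar A_i^n)]=q_{e,k}(a_i)$.

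For part~(ii) I would run the same computation one level up, for $M_i^n:=\E[Q^n(A_i^n)Q^n(A_i^n)^{\top}]$. Finiteness under $\E[S_k^2]<\infty$ and $\E[(V_i^n)^2]<\infty$ again follows from known moment estimates for branching-type policies (the offspring and immigration laws inherit second moments from the $M/G/1$ busy periods, which have finite second moment iff $\E[S^2]<\infty$, and from $V_i^n$). Writing $\xi_i:=Q(A_{i+1})-\mathcal A_i Q(A_i)-n\mathcal B_i$, the computation in part~(i) shows $\E[\xi_i\mid Q(A_i)]=0$, so $\E[Q(A_i)\xi_i^{\top}]=0$ and
\[
M_{i+1}^n=\mathcal A_i M_i^n\mathcal A_i^{\top}+n\mathcal A_i m_i^n\mathcal B_i^{\top}+n\mathcal B_i(m_i^n)^{\top}\mathcal A_i^{\top}+n^2\mathcal B_i\mathcal B_i^{\top}+\Sigma_i^n,\qquad \Sigma_i^n:=\E[\xi_i\xi_i^{\top}].
\]
A variance count --- binomial variance of $Y_i$, Poisson variance over the busy time, variance of the random busy time (finite since $\E[\Theta_{p(i)}^2]<\infty$), and Poisson variance over $V_i^n$, the last contributing $\lambda_k^2\,\mathrm{Var}(V_i^n)=n^2\lambda_k^2(\E[(\bar V_i^n)^2]-s_i^2)=o(n^2)$ by condition~(c) --- gives $\Sigma_i^n=O(n)+o(n^2)=o(n^2)$. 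Substituting $m_i^n=nq_e(a_i)$ from part~(i) and dividing by $n^2$ produces a limiting cycle recursion that is affine with linear part $M\mapsto\mathcal A'M(\mathcal A')^{\top}$ of spectral radius $\varrho(\mathcal A')^2<1$; its unique fixed point is $q_e(a_i)q_e(a_i)^{\top}$ (the outer square of the PE identity), and continuity of the fixed point in the vanishing perturbation gives $\bar M_i^n\to q_e(a_i)q_e(a_i)^{\top}$, which is the assertion.

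For part~(iii) I would iterate the same scheme by induction on $\ell\ge2$, tracking the degree-$\ell$ moment tensor $M_i^{(\ell),n}:=\E[Q^n(A_i^n)^{\otimes\ell}]$; finiteness for every $\ell$ follows from the m.g.f.\ hypotheses (finite m.g.f.\ of $S_k$ near $0$ yields all moments of the $M/G/1$ busy period, and likewise for $V_i^n$). Expanding $(\mathcal A_iQ(A_i)+n\mathcal B_i+\xi_i)^{\otimes\ell}$ multinomially, the $\xi_i$-linear term vanishes in expectation, and any term of $\xi_i$-degree $j\ge2$ is $o(n^\ell)$ because $\E[\|\xi_i\|^j\mid Q(A_i)]=O(\|Q(A_i)\|^{j/2})+o(n^j)$ --- the $o(n^j)$ coming from $\E[|\bar V_i^n-s_i|^j]\to0$, a consequence of~(c) --- combined with the inductive bound $\E[\|Q(A_i)\|^m]=O(n^m)$ for $m\le\ell-1$. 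Hence $M_{i+1}^{(\ell),n}=(\mathcal A_i)^{\otimes\ell}M_i^{(\ell),n}+(\text{affine in the }M_i^{(j),n},\ j<\ell)+o(n^\ell)$; dividing by $n^\ell$, invoking $\bar M_i^{(j),n}\to q_e(a_i)^{\otimes j}$ for $j<\ell$, and using $\varrho((\mathcal A')^{\otimes\ell})=\varrho(\mathcal A')^\ell<1$, the limiting affine recursion has the unique solution $q_e(a_i)^{\otimes\ell}$ (the $\ell$-fold outer product of the PE identity); reading off the diagonal gives $\E[\bar Q_k^n(\bar A_i^n)^\ell]\to(q_{e,k}(a_i))^\ell$.

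The main obstacle is the moment bookkeeping needed to certify that $\Sigma_i^n$ and its higher-order analogues are genuinely of lower order after scaling: one has to track exactly which moments of $S_k$ and of $V_i^n$ must be finite, and which centered quantities must vanish under scaling, at each order $\ell$ --- and it is precisely here that the two regimes (finite m.g.f.\ near $0$ versus finite second moment) separate. Everything else --- the exact affine recursions, uniqueness via $\varrho(\mathcal A')<1$, and the verification that $q_e(a_i)^{\otimes\ell}$ solves the limiting recursion --- is essentially forced once Lemma~\ref{lem:PRC} is in hand. This is the content of Theorem~4 and Proposition~6.1 of~\cite{Hu20Momentarxiv}.
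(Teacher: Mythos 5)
The paper does not actually prove Lemma~\ref{LEM:ASYMPQ2}: immediately before the statement, the authors write that the two lemmas ``are proved in Theorem 4 and Proposition 6.1 in \cite{Hu20Momentarxiv}, and are restated here for completeness.'' There is therefore no in-paper argument against which to compare your sketch; you correctly flag this yourself in your last sentence.

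On its own terms, your strategy is the natural one and I did not find a genuine gap. The stage-transition identity $\E[Q^n(A_{i+1}^n)\mid Q^n(A_i^n)]=\mathcal A_i Q^n(A_i^n)+n\mathcal B_i$ is indeed exact under the binomial-exhaustive policy (including $Q_{p(i)}(A_i^n)=0$, since then $Y_i(0,r_i)=0$ and the server departs immediately), and $q_e(a_{i+1})=\mathcal A_i q_e(a_i)+\mathcal B_i$ follows directly from Definition~\ref{def:SB-PR}, so the fixed-point identification in part~(i) is correct and gives the exact (not merely asymptotic) equality stated. For part~(ii), the decomposition of $\Sigma_i^n$ is right, and the essential bookkeeping point is that $\mathrm{Var}(V_i^n)=n^2\bigl(\E[(\bar V_i^n)^2]-s_i^2\bigr)=o(n^2)$ under condition~(c); combined with $\E[\xi_i\mid Q(A_i)]=0$, the Kronecker-lifted contraction $\varrho(\mathcal A'\otimes\mathcal A')=\varrho(\mathcal A')^2<1$ does give uniqueness and stability under the vanishing perturbation. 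For part~(iii), the induction on $\ell$ works exactly because each term of $\xi$-degree $c\ge2$ in the multinomial expansion contributes $n^b\cdot O\bigl(\E[\|Q(A_i)\|^{a+c/2}]\bigr)+o(n^\ell)=o(n^\ell)$ once $\E[\|Q(A_i)\|^m]=O(n^m)$ is available for $m\le\ell-1$. Two small points worth making precise in a full write-up: (a) for odd $j$ you should pass through $\E\bigl[|\bar V_i^n-s_i|^j\bigr]\le\E\bigl[(\bar V_i^n-s_i)^{2\lceil j/2\rceil}\bigr]^{j/(2\lceil j/2\rceil)}\to0$ to get $o(n^j)$ rather than expanding directly; and (b) the ``finiteness for every $\ell$'' claim is itself an induction on $\ell$ using $\varrho(\mathcal A')^\ell<1$, and it needs the corresponding moment of the $M/G/1$ busy period, which is exactly what the moment/m.g.f.\ hypotheses on $S_k$ supply. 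Neither is a gap in the approach, only in the level of detail of the sketch.
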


Recall (see Section \ref{SecTranslatingPRC}) that, for each stage $i$,
$\Theta_{p(i)}^{(\ell)}$ denotes the busy period ``generated'' by the service of the $\ell$th served customer in queue $p(i)$,
which is the queue being polled at stage $i$.

\begin{lemma}  \label{LEM:ASYMPB}
Assume that $\bar Q^n(0) \deq \tilde Q^n(\infty)$, so that $Q^n$ is stationary, for all $n \ge 1$, and consider the corresponding sequence of busy times
$\{B_i^n : i \in \mathcal{I}^L , n \ge 1\}$ (over a generic stationary server cycle). Then
\begin{enumerate}[(i)]
\item
$\E\off{\bar B_i^n} = r_i q_{e, p(i)}(a_i) \E\off{\Theta_{p(i)}} $ for all $n \geq 1$, $i \in \mathcal{I}^L$.
\item  If
(a) $\E\off{S_k^2} < \infty$ for all $k \in \mathcal{K}$,
(b) $\E\off{(V_i^n)^2} < \infty$ for all $i \in \mathcal{I}^L$, $n \geq 1$,
and (c) $\E\off{(\bar V_i^n)^2} \arr s_i^2$ as $n \arr \infty$ for all $i \in \mathcal{I}^L$,
then $\E\off{\of{\bar B_{i}^n}^2} < \infty$ for all $n \geq 1$ and
\begin{equation*}
\lim_{n \arr \infty} \E\off{\of{\bar B_{i}^n}^2} \arr \of{r_i q_{e,p(i)}(a_i) \E\off{\Theta_{p(i)}}}^2, \quad \text{for all } i \in \mathcal{I}^L .
\end{equation*}
\item If (a) for each $k\in \mathcal{K}$, there exists $\epsilon_k >0$ such that $\E\off{e^{t S_k}} < \infty$ for all $t \in (-\epsilon_k, \epsilon_k)$,
(b) $\E\off{e^{t V_i^n}} < \infty$ for all $t \in \RR_+$, $i \in \mathcal{I}^L$, $n \geq 1$, (c) $\E\off{(\bar V_i^n)^\ell} \arr s_i^\ell$ as $n \arr \infty$ for all $\ell \geq 3$, $i \in \mathcal{I}^L$,
then $\E\off{\of{\bar B_{i}^n}^\ell} < \infty$ for all $n \geq 1$
and
$$\lim_{n\tinf} \E\off{\of{\bar B_{i}^n}^\ell} = \of{r_i q_{e,p(i)}(a_i) \E\off{\Theta_{p(i)}}}^\ell \qforallq \ell \geq 3, \,\, i \in \mathcal{I}^L.$$
\end{enumerate}
\end{lemma}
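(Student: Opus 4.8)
The plan is to express the stationary busy time under the binomial-exhaustive policy as a randomly-stopped sum of i.i.d.\ $M/G/1$ busy periods, whose stopping index is the stationary queue length at the corresponding polling epoch, and then to reduce all three assertions to the moment statements for that queue length supplied by Lemma \ref{LEM:ASYMPQ2}. By Definition \ref{def:Binomial}, over a generic stationary server cycle,
\begin{equation*}
\bar B_i^n = \frac{1}{n}\sum_{j=1}^{Q_{p(i)}^n(A_i^n)} \Theta_{p(i)}^{(j)} Y_i^{(j)}, \qquad i \in \mathcal{I}^L,\ n \ge 1,
\end{equation*}
where the $Y_i^{(j)}$ are i.i.d.\ Bernoulli$(r_i)$, the $\Theta_{p(i)}^{(j)}$ are i.i.d.\ busy periods of an $M/G/1$ queue with arrival rate $\lambda_{p(i)}$ and service rate $\mu_{p(i)}$, and the sequence $\{(\Theta_{p(i)}^{(j)},Y_i^{(j)}):j\ge1\}$ is independent of $Q_{p(i)}^n(A_i^n)$. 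Since $\rho<1$ forces $\lambda_{p(i)}<\mu_{p(i)}$, we have $\E\off{\Theta_{p(i)}}=1/(\mu_{p(i)}-\lambda_{p(i)})<\infty$; I would also invoke the classical facts that $\E\off{\Theta_{p(i)}^\ell}<\infty$ whenever $\E\off{S_{p(i)}^\ell}<\infty$, so that $\E\off{\Theta_{p(i)}^2}<\infty$ under hypothesis (ii) and all moments of $\Theta_{p(i)}$ are finite under hypothesis (iii).

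For part (i) I would apply Wald's identity: using the independence above, $\E\off{\Theta_{p(i)}Y_i}=r_i\E\off{\Theta_{p(i)}}$, and $\E\off{Q_{p(i)}^n(A_i^n)}<\infty$ (Lemma \ref{LEM:ASYMPQ2}(i)),
\begin{equation*}
\E\off{\bar B_i^n}=\frac{1}{n}\,\E\off{Q_{p(i)}^n(A_i^n)}\,r_i\,\E\off{\Theta_{p(i)}}=r_i\,\E\off{\bar Q_{p(i)}^n(\bar A_i^n)}\,\E\off{\Theta_{p(i)}}=r_i\,q_{e,p(i)}(a_i)\,\E\off{\Theta_{p(i)}},
\end{equation*}
the last equality being exactly Lemma \ref{LEM:ASYMPQ2}(i).

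For parts (ii) and (iii) I would treat $\E\off{\of{\bar B_i^n}^\ell}$ for the relevant order ($\ell=2$ under (ii); each fixed $\ell\ge3$ under (iii)) by a single direct computation. Put $\Xi_j:=\Theta_{p(i)}^{(j)}Y_i^{(j)}$, i.i.d.\ with $\E\off{\Xi_1}=r_i\E\off{\Theta_{p(i)}}$ and $\E\off{\Xi_1^\ell}=r_i\E\off{\Theta_{p(i)}^\ell}<\infty$ under the respective hypothesis, and $N^n:=Q_{p(i)}^n(A_i^n)$, independent of $\{\Xi_j\}$. A multinomial expansion gives, for i.i.d.\ summands, $\E\off{\of{\sum_{j=1}^m\Xi_j}^\ell}=(\E\off{\Xi_1})^\ell m^\ell+R_\ell(m)$ with $|R_\ell(m)|\le C_\ell m^{\ell-1}$, where $C_\ell$ depends only on the moments of $\Xi_1$ up to order $\ell$. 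Conditioning on $N^n$ and dividing by $n^\ell$,
\begin{equation*}
\E\off{\of{\bar B_i^n}^\ell}=\of{r_i\E\off{\Theta_{p(i)}}}^\ell\,\E\off{\of{\bar Q_{p(i)}^n(\bar A_i^n)}^\ell}+\frac{1}{n^\ell}\E\off{R_\ell(N^n)},
\end{equation*}
and $\big|\frac{1}{n^\ell}\E\off{R_\ell(N^n)}\big|\le\frac{C_\ell}{n}\,\E\off{\of{\bar Q_{p(i)}^n(\bar A_i^n)}^{\ell-1}}\to0$, because the $(\ell-1)$-th moment of the stationary scaled queue at the polling epoch is bounded in $n$ by Lemma \ref{LEM:ASYMPQ2}. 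The same identity, together with $\E\off{(N^n)^\ell}<\infty$ from Lemma \ref{LEM:ASYMPQ2}, shows $\E\off{\of{\bar B_i^n}^\ell}<\infty$ for every $n$. Finally, Lemma \ref{LEM:ASYMPQ2}(ii) (for $\ell=2$, under hypothesis (ii)) and Lemma \ref{LEM:ASYMPQ2}(iii) (for $\ell\ge3$, under hypothesis (iii)) give $\E\off{\of{\bar Q_{p(i)}^n(\bar A_i^n)}^\ell}\to(q_{e,p(i)}(a_i))^\ell$ as $n\tinf$, whence $\E\off{\of{\bar B_i^n}^\ell}\to\of{r_i q_{e,p(i)}(a_i)\E\off{\Theta_{p(i)}}}^\ell$, which is the claim.

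Given Lemma \ref{LEM:ASYMPQ2}, essentially all of the substantive content is already available, so the only genuine subtlety is that the number of summands $N^n$ is itself of order $n$ rather than a fixed constant: one must use the independence of $N^n$ and $\{\Xi_j\}$ carefully and verify that the remainder $n^{-\ell}\E\off{R_\ell(N^n)}$ --- which scales like $n^{-1}\E\off{(\bar Q_{p(i)}^n(\bar A_i^n))^{\ell-1}}$ --- really vanishes (it does, since $N^n=\Theta_p(n)$). I would also note that, when this lemma is applied inside the proof of Theorem \ref{th:MomentCheck}, Assumption \ref{assum2} is in force, which supplies the moment-convergence hypotheses of Lemma \ref{LEM:ASYMPQ2} (in particular the second-order ones) needed to bound $\E\off{(\bar Q_{p(i)}^n(\bar A_i^n))^{\ell-1}}$ in the case $\ell=3$.
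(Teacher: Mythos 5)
Your proof cannot be checked against an in-text argument, because the paper does not actually prove Lemma \ref{LEM:ASYMPB}: it is imported verbatim from the companion reference (Theorem 4 and Proposition 6.1 of \cite{Hu20Momentarxiv}) and ``restated here for completeness.'' Judged on its own merits, your argument is essentially correct and is consistent with the representation the paper itself uses elsewhere: the identity $\bar B_i^n = \tfrac{1}{n}\sum_{j=1}^{Q^n_{p(i)}(A_i^n)}\Theta_{p(i)}^{(j)}Y_i^{(j)}$, with the thinning variables and busy periods independent of the queue at the polling epoch, is exactly the decomposition appearing in the proof of Corollary \ref{cor:BinAdmissible} and in display \eqref{eq:92}. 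Part (i) via Wald plus Lemma \ref{LEM:ASYMPQ2}(i) is fine, and your multinomial expansion $\E\bigl[(\sum_{j=1}^m\Xi_j)^\ell\bigr]=(\E[\Xi_1])^\ell m^\ell+R_\ell(m)$ with $|R_\ell(m)|\le C_\ell m^{\ell-1}$ is valid (distinct-index tuples give the leading term; the $O(m^{\ell-1})$ repeated-index tuples are controlled by Lyapunov/H\"older using moments of $\Xi_1$ up to order $\ell$, which are finite since $\E[\Theta_{p(i)}^\ell]<\infty$ under the respective service-time hypotheses), so the remainder is $O\bigl(n^{-1}\E[(\bar Q^n_{p(i)}(\bar A_i^n))^{\ell-1}]\bigr)\to 0$ by the moment bounds of Lemma \ref{LEM:ASYMPQ2}. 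The only loose end is the one you flag yourself: when $\ell=3$ you need boundedness of the second moment of $\bar Q^n_{p(i)}(\bar A_i^n)$, i.e.\ the hypotheses of Lemma \ref{LEM:ASYMPQ2}(ii), while hypothesis (iii)(c) of the present lemma only asserts $\E[(\bar V_i^n)^\ell]\to s_i^\ell$ for $\ell\ge 3$; this is not a genuine gap, since $\sup_n\E[(\bar V_i^n)^3]<\infty$ makes $\{(\bar V_i^n)^2\}$ uniformly integrable and, with $\bar V_i^n\Rightarrow s_i$ from Assumption \ref{assum1}, yields $\E[(\bar V_i^n)^2]\to s_i^2$ directly, without appealing to Assumption \ref{assum2}.
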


\begin{proof}[Proof of Theorem \ref{th:MomentCheck}]
We prove that the two assertions in the theorem hold for the binomial-exhaustive policy under any control parameters $(L, \mathbf r) \in \NN \times \mathcal R$,
and so, in particular, for $(L_*, \mathbf r_*)$.
\paragraph*{Proof of $\boldsymbol{(i)}$}
Since $\psi(x) = O(||x||^p)$ for some $p > 1$, there exist $x_0 \in \RR_+$ and $M \in \RR_+$ such that for all $x$ with
$\max_{ k \in \mathcal{K}} x_k \geq x_0$, we have
\begin{equation} \label{eq:89}
\psi(x) \leq M ||x||^p \leq
M (K (\max_{ k \in \mathcal{K}} x_k)^2)^\frac{p}{2}
\leq
M K^\frac{p}{2} \sum_{k \in \mathcal{K}} x_k^p .
\end{equation}
Let $M' := M K^\frac{p}{2}$, and $\mathbf x_0 := (x_0,...,x_0) \in \RR_+^K$.
Due to \eqref{eq:89} and the fact that $\psi$ is non-decreasing, $\bar \Psi^n_{(L,\mathbf r)}$ satisfies
\begin{align*}
\stepcounter{equation}\tag{\theequation}\label{eq:90}
\bar \Psi^n_{(L,\mathbf r)} & = \int_{0}^{\bar T^n_L} \psi(\bar Q^n(u)) du \\
& \leq \int_{0}^{\bar T^n_L}  \of{\psi(\mathbf x_0) \mathbf  1_{\{ \max_{ k \in \mathcal{K}} \bar Q_k^n(u) < x_0 \} }
+ \of{M' \sum_{k \in \mathcal{K}} (\bar Q_k^n(u))^p} \mathbf  1_{\{ \max_{ k \in \mathcal{K}} \bar Q_k^n(u) \geq x_0 \} } }  du \\
&\leq \bar T^n_L \psi(\mathbf x_0) + \int_{0}^{\bar T^n_L}   \of{M' \sum_{k \in \mathcal{K}} (\bar Q_k^n(u))^p} du.
\end{align*}

We start by showing that $\{\bar T^n_L \psi(\mathbf x_0): n \geq 1\}$ is UI. To do this, note that the steady-state cycle length $\bar T^n_L$ can be represented as
$\bar T^n_L = \sum_{i \in \mathcal{I}^L} \of{\bar B_i^n + \bar V_i^n}$.
Similar derivation to that of \eqref{barB} gives
\begin{equation} \label{eq:91}
\bar B_i^n \Arr r_i q_{e,p(i)}(a_i) \E\off{\Theta_{p(i)}} \quad \text{as } n \arr \infty.
\end{equation}
and, by Lemma \ref{LEM:ASYMPB} (i),
\begin{equation} \label{convBmeans}
\E\off{\bar B_i^n} = r_i q_{e,p(i)}(a_i) \E\off{\Theta_{p(i)}}  \quad \text{for all } n \geq 1.
\end{equation}
Since $\bar B^n_i \ge 0$ for all $n \ge 1$, the two convergence in \eqref{eq:91} and in \eqref{convBmeans} imply together that
the sequence $\{\bar B_i^n : n \geq 1\}$ is UI; e.g., \cite[Theorem 5.4]{billingsley1968convergence}.
Together with the fact that $\{\bar V_i^n : n \geq 1\}$ is UI by Assumption \ref{assum1},
we get that $\{\bar T^n_L : n \geq 1\}$, and thus $\{\bar T^n_L \psi(\mathbf x_0): n \geq 1\}$, is UI.

We next prove that the second term in the right-hand side of \eqref{eq:90} is UI.
To this end, let $\tilde B_i^n$ denote the busy time if the exhaustive policy is employed at stage $i$, when the initial queue length at the corresponding
polling epoch, the arrival process to the queue, and the service times of all customers served during $B^n_i$ remain unchanged, so that $\bar B^n_i \le \tilde B^n_i$ w.p.1
for all $n \ge 1$. Then
\begin{align}\label{eq:UI2}
M' \int_{0}^{\bar T^n_L} \sum_{k \in \mathcal{K}} (\bar Q_k^n(u))^p  \, du
&= M' \sum_{i \in \mathcal{I}^L} \int_{\bar A_i^n}^{\bar A_i^n + \bar B_i^n + \bar V_i^n}
\sum_{k \in \mathcal{K}} (\bar Q_k^n(u))^p \, du \nonumber \\
\quad &\leq M'  \sum_{i \in \mathcal{I}^L} \of{\bar B_i^n + \bar V_i^n} \sum_{k \in \mathcal{K}} \of{\bar Q_k^n(\bar A_i^n) + \bar{\mathcal P}_k^n(\bar B_i^n
+ \bar V_i^n)}^{p} \nonumber \\
\quad &\leq M'  \sum_{i \in \mathcal{I}^L} \of{\tilde B_i^n + \bar V_i^n} \sum_{k \in \mathcal{K}} \of{\bar Q_k^n(\bar A_i^n) + \bar{\mathcal P}_k^n(\tilde B_i^n
+ \bar V_i^n)}^{p},
\end{align}
where the first inequality is due to the omission of the service process at stage $i$.

We next show that, for any $\ell \geq 1$,
\begin{equation} \label{eq:70}
\sup_{n \geq 1} \, \E\off{ \of{\tilde B_i^n + \bar V_i^n}^{\ell} } < \infty ,
\end{equation}
and
\begin{equation} \label{eq:71}
\sup_{n \geq 1} \, \E\off{\of{\bar Q_k^n(\bar A_i^n) + \bar{\mathcal P}_k^n(\tilde B_i^n + \bar V_i^n)}^{\ell}  } < \infty,
\end{equation}
from which it follows that, for any $\ep > 0$,
\begin{equation*} \label{eq:38}
\sup_{n \geq 1} \, \E\off{ \of{\tilde B_i^n + \bar V_i^n}^{1 +\epsilon} \of{\bar Q_k^n(\bar A_i^n)
+ \bar{\mathcal P}_k^n(\tilde B_i^n + \bar V_i^n)}^{p (1 +\epsilon)}  } < \infty
\end{equation*}
by virtue of H\"{o}lder's inequality, so that the sequence of bounds in \eqref{eq:UI2} is UI.

The inequality in \eqref{eq:70} follows directly from the fact that $\tilde B_i^n$ and $\bar V_i^n$ are independent, and both are uniformly bounded in $n$.
Indeed, $\sup_n \E [(\tilde B_i^n)^\ell] < \infty$ by Lemma \ref{LEM:ASYMPB} (i)--(iii),
and $\sup_n \E[(\bar V_i^n)^{\ell}] < \infty$ by Assumption \ref{assum2}.	
To prove \eqref{eq:71}, note that
\begin{align} \label{eq:40}
\E\off{\of{\bar Q_k^n(\bar A_i^n) + \bar{\mathcal P}_k^n(\tilde B_i^n + \bar V_i^n)}^{\ell}  }
& = \E\off{  \sum_{j=0}^{\ell}  {\ell \choose j} \of{\bar Q_k^n(\bar A_i^n) }^{\ell-j} \of{\bar{\mathcal P}_k^n(\tilde B_i^n + \bar V_i^n)}^j } \nonumber \\
& \leq \sum_{j=0}^{\ell}  {\ell \choose j} \E\off{   \of{\bar Q_k^n(\bar A_i^n) }^{(\ell-j)\alpha} }^{\frac{1}{\alpha}}
\E\off{ \of{\bar{\mathcal P}_k^n(\tilde B_i^n + \bar V_i^n)}^{j \beta}   }^{\frac{1}{\beta}} ,
\end{align}
where the equality holds the the Binomial Theorem, and the inequality follows from H\"{o}lder's inequality, for $\alpha > 1$ and $1/\alpha + 1/\beta = 1$.

Let ${\cdot \brace \cdot} $ denote the Stirling numbers of the second type, and recall that, for a Poisson random variable $Y$ with mean $\nu$, it holds that
$\mathbb E[Y^N] = \sum_{j=1}^N {N \brace j}\nu^j$, for $N \in \NN$. Thus,
\begin{equation} \label{eq:41}
\begin{split}
\E\off{ \of{\bar{\mathcal P}_k^n(\tilde B_i^n + \bar V_i^n)}^{j \beta}   }  &= \E\off{ \E\off{ \of{\bar{\mathcal P}_k^n(\tilde B_i^n + \bar V_i^n)}^{j \beta} \bigg| \tilde B_i^n + \bar V_i^n  }   } \\
&= \E\off{ \E\off{ \of{\frac{1}{n} \mathcal P_k^n( \tilde B_i^n n + \bar V_i^n n )}^{j \beta} \bigg| \tilde B_i^n + \bar V_i^n }   } \\
&= \E\off{  \frac{1}{n^{j \beta}} \sum_{m=0}^{j \beta} \of{\lambda_k ( \tilde B_i^n n + \bar V_i^n n )}^{m} {j\beta \brace m}   } \\
&= \E\off{ \sum_{m=0}^{j \beta} \frac{1}{n^{j\beta-m}} \lambda_k^m (\tilde B_i^n + \bar V_i^n)^{m} {j\beta \brace m} } \\
&= \sum_{m=0}^{j \beta} \frac{1}{n^{j\beta-m}} \lambda_k^m {j\beta \brace m}  \E\off{ (\tilde B_i^n + \bar V_i^n)^{m} } \\
&= \lambda_k^{j\beta}  \E\off{ (\tilde B_i^n + \bar V_i^n)^{j\beta} } + o(1).
\end{split}
\end{equation}
Plugging \eqref{eq:41} into \eqref{eq:40}, we get
\begin{equation*}
\begin{split}
&\E\off{\of{\bar Q_k^n(\bar A_i^n) + \bar{\mathcal P}_k^n(\tilde B_i^n + \bar V_i^n)}^{\ell}  }  \\
&\leq \sum_{j=0}^{\ell}  {\ell \choose j} \E\off{   \of{\bar Q_k^n(\bar A_i^n) }^{(\ell-j)\alpha} }^{\frac{1}{\alpha}} \of{\lambda_k^{j\beta}
\E\off{ (\tilde B_i^n + \bar V_i^n)^{j\beta} } + o(1)}^{\frac{1}{\beta}} ,
\end{split}
\end{equation*}
which is uniformly bounded in $n$ due to Lemma \ref{LEM:ASYMPQ2} (i)--(iii), Lemma \ref{LEM:ASYMPB} (i)--(iii), Assumption \ref{assum2},
and the independence of $\tilde B_i^n$ and $\bar V_i^n$. Thus, \eqref{eq:71} holds.

\paragraph*{Proof of $\boldsymbol{(ii)}$}
For $x \in \RR^K$, let $f(x) = \sum_{k=1}^{K} c_k x_k$.
Since $\psi(x) = O(f(x))$, there exist $x_0 \in \RR_+$ and $M \in \RR_+$, such that for all $x$ with
$\max_{ k \in \mathcal{K}} x_k \geq x_0$,
\begin{equation*}
\psi(x) \leq M f(x) =
M \sum_{k \in \mathcal{K}} c_k x_k.
\end{equation*}
As in \eqref{eq:90}, this implies that
\begin{equation} \label{eq:96}
\begin{split}
\bar \Psi^n_{(L,\mathbf r)} & = \int_{0}^{\bar T^n_L} \psi(\bar Q^n(u)) du
\leq \bar T^n_L \psi(\mathbf x_0) + \int_{0}^{\bar T^n_L}   \of{M \sum_{k \in \mathcal{K}} c_k \bar Q_k^n(u)} du. 	
\end{split}
\end{equation}			

Since $\{\bar T^n_L \psi(\mathbf x_0) : n \geq 1\}$ was shown to be UI in the proof of part (i) above,
we need only show that the sequence corresponding to the second term on the right-hand side of \eqref{eq:96} is UI.
Similarly to the derivation of \eqref{eq:UI2}, we can bound this term from above via
\begin{align*}
\stepcounter{equation}\tag{\theequation}\label{eq:UI}
&\int_{0}^{\bar T^n_L}   \of{M \sum_{k \in \mathcal{K}} c_k \bar Q_k^n(u)} du \\
&\stackrel{}{\leq}  M \sum_{k \in \mathcal{K}} c_k  \sum_{i \in \mathcal{I}^L} \of{\bar Q_k^n(\bar A_i^n) + \bar{\mathcal P}_k^n(\tilde B_i^n + \bar V_i^n)} \of{\tilde B_i^n + \bar V_i^n} \\
&= M \sum_{k \in \mathcal{K}} c_k  \sum_{i \in \mathcal{I}^L} \of{\bar Q_k^n(\bar A_i^n) \tilde B_i^n +  \bar Q_k^n(\bar A_i^n) \bar V_i^n + \bar{\mathcal P}_k^n(\tilde B_i^n + \bar V_i^n) (\tilde B_i^n + \bar V_i^n) }.
\end{align*}
We next show that the sequence corresponding to each term in the right-hand side of \eqref{eq:UI} is UI.	
First, by Lemma \ref{lem:interchange}, $\bar Q_k^n( \bar A_i^n) \Arr q_{e,k}(a_i)$ in $\RR_+$ as $n \arr \infty$.
In addition, it follows from \eqref{eq:91} (setting $r_i = 1$) that
\begin{equation} \label{eq:B_i^n}
\begin{split}
\tilde B_i^n \Arr q_{e,p(i)}(a_i) \E\off{\Theta_{p(i)}} \quad \text{as } n \arr \infty,
\end{split}
\end{equation}
so that, by	Slutsky's theorem,
\begin{equation} \label{eq:87}
\bar Q_k^n( \bar A_i^n) \tilde B_i^n \Arr q_{e,k}(a_i) q_{e,p(i)}(a_i) \E\off{\Theta_{p(i)}} \quad \text{as } n \arr \infty.
\end{equation}
Now,
\begin{equation} \label{eq:92}
\begin{split}
\E\off{\bar Q_k^n( \bar A_i^n) \tilde B_i^n}
&= \E\off{	\E\off{\bar Q_k^n( \bar A_i^n) \frac{1}{n} \sum_{j=1}^{Q^n_{p(i)}( A_i^n)} \Theta_{p(i)}^{(j)}  \bigg| Q^n( A_i^n) } } \\
&= \E\off{ \bar Q_k^n(\bar A_i^n) \bar Q^n_{p(i)}(\bar A_i^n) }\E\off{\Theta_{p(i)}}  \\
&\arr q_{e,k}(a_i) q_{e,p(i)}(a_i) \E\off{\Theta_{p(i)}} \quad \text{as } n \arr \infty \quad \mbox{by Lemma \ref{LEM:ASYMPQ2} (ii).}
\end{split}
\end{equation}
It follows from \eqref{eq:87}, \eqref{eq:92}, and the fact that both $\bar Q_k^n( \bar A_i^n) \tilde B_i^n$ and $q_{e,k}(a_i) q_{e,p(i)}(a_i) \E\off{\Theta_{p(i)}}$ are non-negative, that the sequence $\{ \bar Q_k^n( \bar A_i^n) \tilde B_i^n : n \geq 1 \}$ is UI. 
Second, $\bar Q_k^n(\bar A_i^n)$ and $\bar V_i^n$ being independent implies that
$$\E\off{\of{\bar Q_k^n(\bar A_i^n) \bar V_i^n}^2} =\E\off{\of{\bar Q_k^n(\bar A_i^n)}^2 } \E\off{\of{\bar V_i^n}^2}.$$
Because $\E\off{\of{\bar V_i^n}^2}  < \infty$ under Assumption \ref{assum2}, and $ \E\off{\of{\bar Q_k^n(\bar A_i^n)}^2} < \infty$ given $\E\off{S_k^2} < \infty$
by Lemma \ref{LEM:ASYMPQ2} (ii), the second moment of $\bar Q_k^n(\bar A_i^n) \bar V_i^n$ is finite,
implying that $\{\bar Q_k^n(\bar A_i^n) \bar V_i^n : n \geq 1\}$ is UI.

Lastly, for $\bar{\mathcal P}_k^n(\tilde B_i^n + \bar V_i^n) (\tilde B_i^n + \bar V_i^n)$,
note that $\tilde B_i^n \Arr q_{e,p(i)}(a_i) \E\off{\Theta_{p(i)}} $ by \eqref{eq:B_i^n}, and $\bar V_i^n \Arr s_i$ as $n \arr \infty$.
By the FWLLN for Poisson processes, we have
\begin{equation} \label{eq:93}
\bar{\mathcal P}_k^n(\tilde B_i^n + \bar V_i^n)
\Arr \lambda_k \of{q_{e,p(i)}(a_i) \E\off{\Theta_{p(i)}}   + s_i } \quad \text{as } n \arr \infty.
\end{equation}
By Slutsky's theorem, $\bar{\mathcal P}_k^n(\tilde B_i^n + s_i)  (\tilde B_i^n + \bar V_i^n) \Arr \lambda_k \of{q_{e,p(i)}(a_i) \E\off{\Theta_{p(i)}} + s_i}^2 $ as $n \arr \infty$.
Next, 
\begin{equation} \label{eq:94}
\begin{split}
\E\off{\bar{\mathcal P}_k(\tilde B_i^n + \bar V_i^n) (\tilde B_i^n + \bar V_i^n)  }
=& \E\off{\E\off{\bar{\mathcal P}_k(\tilde B_i^n + \bar V_i^n) (\tilde B_i^n + \bar V_i^n)   \big| \tilde B_i^n + \bar V_i^n } } \\
=& \E\off{\lambda_k (\tilde B_i^n + \bar V_i^n)^2  } \\
\arr& \lambda_k \of{q_{e,p(i)}(a_i) \E\off{\Theta_{p(i)}}  + s_i}^2 \quad \text{as } n \arr \infty,
\end{split}
\end{equation}
where the limit follows from Lemma \ref{LEM:ASYMPB} (i)--(ii) and Assumption \ref{assum2}.
Since both the pre-limit and limit in \eqref{eq:93} are non-negative, Theorem 5.4 in \cite{billingsley1968convergence},
\eqref{eq:93} and \eqref{eq:94} imply that $\{\bar{\mathcal P}_k^n(\tilde B_i^n + \bar V_i^n)  (\tilde B_i^n + \bar V_i^n) : n \geq 1 \}$ is UI, and in turn,
so is $\{\bar \Psi^n_{(L,\mathbf r)} : n \ge 1\}$.
\end{proof}

\section{Summary and Future Research} \label{secSummary}
We considered the optimal-control problem of polling systems with large switchover times.
Under the large-switchover-time scaling, we established that the binomial-exhaustive policy, with properly chosen control parameters, is asymptotically optimal.
Those optimal control parameters are computed by solving an FCP for a related deterministic relaxation, which is described via an HDS, and arises
as the fluid limit for a sequence of stochastic systems operating under the corresponding binomial-exhaustive policy.
For the important special case in which the basic table is cyclic and the cost function is separable convex and has at most a polynomial growth,
we showed that the exhaustive policy is asymptotically optimal.

The analytical tools in this paper can be useful in characterizing asymptotically optimal controls in other settings.
For example, the Stochastic Economic Lot Scheduling Problem (SELSP),
can be modeled as a polling system in which backlogged demand implies that the buffer content can be negative;
see, e.g., \cite{federgruen1996stochastic}.
Further, the stability region of the fluid model for polling systems is easier to characterize than that of the underlying stochastic system, and can
therefore be used to study the stability of stochastic polling systems under controls that do not adhere to the
conditions in \cite{fricker1994monotonicity}.

\section*{Acknowledgement}
We thank Professor Hanoch Levy for valuable conversations regarding the binomial-exhaustive policy,
which he proposed in \cite{levy1988optimization}.

\begin{appendix}

%
%

\section{Proof of Proposition \ref{LEM:LINEARCYCLIC}} \label{pf:linearcyclic}
The proof of Proposition \ref{LEM:LINEARCYCLIC} involves approximating $\psi$ with piecewise linear functions.
Since $\psi$ is assumed to be separable convex, each of its components $\psi_k$ is an increasing convex function mapping $\RR_+$ into itself,
and can therefore be approximated over any compact interval by piecewise linear functions of the form
\begin{equation} \label{eq:95}
\begin{split}
p_k(x) &=
\begin{cases}
p_k^{(1)} x \quad &\text{if } \alpha_k^{(1)} \leq x < \alpha_k^{(2)} \\
p_k^{(2)} x \quad &\text{if } \alpha_k^{(2)} \leq x < \alpha_k^{(3)} \\
\vdots \\
p_k^{(N_k-1)} x \quad &\text{if } \alpha_k^{(N_k-1)} \leq x < \alpha_k^{(N_k)} \\
p_k^{(N_k)} x \quad &\text{if } \alpha_k^{(N_k)} \leq x. \\
\end{cases}
\end{split}
\end{equation}
where $0 \leq p_k^{(1)} < p_k^{(2)} \dots < p_k^{(N_k)}$ in $\RR_+$ and $0 = \alpha_k^{(1)} < \alpha_k^{(2)} \dots < \alpha_k^{(N_k)} $ in $\RR_+$.
We refer to $\{\af^{(\ell)}_k : 1 \le \ell \le N_k\}$ as the {\em irregular points} (of $p_k$),
since $p_k$ is differentiable everywhere except at those points, and to $\{p^{(\ell)}_k : 1 \le \ell \le N_k\}$ as the {\em coefficients} of $p_k$.

The next lemma is the key to proving Proposition \ref{LEM:LINEARCYCLIC}; its proof appears in Section \ref{apProofLemmaPWL} below.
We say that $f: \RR_+^{K} \ra \RR_+$ is separable and piecewise linear if for $x \in \RR_+^K$, $f(x) = \sum_{k=1}^{K} f_k(x_k)$
and each $f_k$ is a piecewise linear function, mapping $\RR_+$ into itself.
\begin{lemma} \label{lem:FCPPieceWiseLinearCost}
If the basic table is cyclic and $\psi$ is separable and piecewise linear  (in addition to being nondecreasing and continuous),
then $q_{exh}$ solves \eqref{eq:FluidOpt}.
\end{lemma}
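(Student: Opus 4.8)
\textbf{Proof proposal for Lemma \ref{lem:FCPPieceWiseLinearCost}.}

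The plan is to reduce the question to an exchange argument over a single table cycle. Recall that for a cyclic basic table, $L=1$ is the only relevant structure (by Proposition \ref{cor:5.3} it suffices to consider one-cycle PE-candidates), so a PE-candidate $q_e$ is determined by its vector of busy times $(b_i, i \in \mathcal I)$, equivalently by the proportions $\mathbf r = (r_i, i \in \mathcal I)$, and by a single anchor point. For the cyclic table each queue $k$ is visited exactly once, so by the argument already given in Section \ref{secFCPformal} an optimal PE-candidate exhausts each queue at least once, which for a cyclic table forces $r_k = 1$ for every $k$; that is precisely $q_{exh}$. So for piecewise linear (separable) $\psi$ there is almost nothing left to prove beyond invoking that exhaustion observation --- but the subtlety the lemma is really addressing is that this observation shows $q_{exh}$ is \emph{a} minimizer only after one checks the minimization is well-posed, and, more importantly, it is the stepping stone used to pass from piecewise linear costs to general separable convex costs in the proof of Proposition \ref{LEM:LINEARCYCLIC}. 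So I will give the argument that $q_{exh}$ attains the infimum in \eqref{eq:FluidOpt} directly.

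First I would fix the cyclic table and parametrize: by Lemma \ref{lem:bijective}, PE-candidates are in bijection with $\mathbf r \in \mathcal R = [0,1]^K$ (each queue visited once, so the condition $\sum r_i > 0$ per queue reads $r_k > 0$), and by the discussion in Section \ref{secFCPformal} an optimal PE-candidate lies in $\mathcal R' = \{\mathbf r : r_k \ge 1\} = \{\mathbf 1\}$, a singleton. Thus $q_{exh}$, corresponding to $\mathbf r = \mathbf 1$, is the \emph{only} candidate for the minimum, hence trivially the minimizer, provided we know the minimum is attained --- which follows from Lemma \ref{lem:ExistenceOfSolution} (the objective is continuous over the compact set $\mathcal R'$, here a point). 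Since $\psi$ piecewise linear is in particular nondecreasing and continuous, Lemma \ref{lem:ExistenceOfSolution} applies and the argument closes. The only place I need care is the step ``each queue must be exhausted at least once'': the construction in Section \ref{secFCPformal} shows that if some $q_{e,k}$ stays strictly positive over its period, one can shift its initial value down by $\min_t q_{e,k}(t)$, keeping the same busy times, strictly decreasing the trajectory componentwise and hence (since $\psi$ is nondecreasing) weakly decreasing the cost; iterating, in an optimal PE-candidate each queue hits $0$, and on a cyclic table that is exactly $r_k = 1$ for all $k$.

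The main obstacle is really bookkeeping rather than conceptual: one must verify that ``$r_k=1$'' genuinely describes $q_{exh}$ (the server empties the queue and immediately switches), that reducing a queue's initial level while holding busy times fixed still produces a valid PE-candidate satisfying the flow-balance relations \eqref{eq:PRCRecursion} with the same $e_{i,j}$ (it does, because on a cyclic table the $e_{i,j}$ depend on the other queues' busy times, which are unchanged, plus switchover times), and that no subtlety arises from the anchor point. I do not expect the piecewise linearity of $\psi$ to play any essential role in \emph{this} lemma --- it is monotonicity and continuity that are used; the piecewise linear structure is what makes Lemma \ref{lem:FCPPieceWiseLinearCost} useful downstream, where one approximates a general separable convex $\psi_k$ from below by the functions $p_k$ in \eqref{eq:95} and passes to the limit to conclude Proposition \ref{LEM:LINEARCYCLIC}. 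So in writing the proof I would (i) set up the bijection and the reduction to $\mathcal R'$, (ii) invoke the exhaustion argument to identify $\mathcal R' = \{\mathbf 1\}$ on a cyclic table, (iii) invoke Lemma \ref{lem:ExistenceOfSolution} for attainment, and (iv) conclude $q_{exh}$ solves \eqref{eq:FluidOpt}.
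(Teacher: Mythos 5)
Your proposal has a genuine gap: it only establishes optimality of $q_{exh}$ among one-cycle PE-candidates, but \eqref{eq:FluidOpt} takes the infimum over $\mathcal Q$, the set of \emph{all} PE-candidates, of \emph{every} cycle length $\tau_L$, $L \ge 1$. You cite Proposition \ref{cor:5.3} as showing ``it suffices to consider one-cycle PE-candidates,'' but that proposition only asserts that $q_{exh}$ solves \eqref{eq:FluidOpt2} \emph{with $L=1$}; it is silent about whether some $L$-cycle PE-candidate with $L \ge 2$ could do strictly better. Your subsequent reduction fails for exactly this reason: on the $L$-cycle augmented table each queue $k$ appears $L$ times, so the ``exhaust at least once per server cycle'' observation from Section \ref{secFCPformal} only forces $\sum_{j=1}^L r_{k_j} \ge 1$, and $\mathcal R'$ is nowhere near the singleton $\{\mathbf 1\}$ once $L \ge 2$. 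A PE-candidate with, say, $L=2$ and $(r_{k_1}, r_{k_2}) = (0.6, 0.4)$ is admissible and is not $q_{exh}$, so ``only one candidate remains'' is false and the argument collapses.

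This is precisely where the paper's proof earns its keep — and where your remark that piecewise linearity ``does not play any essential role in this lemma'' is mistaken. The paper fixes an arbitrary $L$, relaxes \eqref{eq:FluidOpt2} per-queue to \eqref{eq:FluidOpt4}, uses the piecewise-linear decomposition $\psi_k(x) = \sum_\ell h_k^{(\ell)}\, x \,\mathbf 1_{\{x \ge \alpha_k^{(\ell)}\}}$ (with $h_k^{(\ell)} \ge 0$ because the coefficients in \eqref{eq:95} are increasing) to turn the objective into a nonnegative combination of superlevel-set areas $A^{(\ell)}(q^L_k)$, then splits those areas across the $L$ table cycles, bounds each piece below by the parabolic segment $\frac{1}{2}(1-\rho_k)\lambda_k(\tau^{(\ell,m)})^2$, and minimizes the resulting quadratic over the simplex constraint $\sum_m \tau^{(\ell,m)} \ge M^{(\ell)}$ via Cauchy–Schwarz. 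Equal splitting $\tau^{(\ell,m)}_* = M^{(\ell)}/L$ is optimal for the relaxation, and it is then verified that $q_{exh}$ (viewed as $L$ identical repetitions of the one-cycle exhaustive PE) actually attains these optimal areas — which closes the gap for every $L$, and hence for \eqref{eq:FluidOpt}. The piecewise-linear structure is exactly what makes this level-set/area/Cauchy–Schwarz argument go through; without it you cannot reduce the $L \ge 2$ case to a tractable finite-dimensional convex program. So the lemma is not ``almost nothing left to prove'' as you suggest; the $L \ge 2$ case is the substance, and your proposal does not address it.
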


\begin{proof} [Proof of Proposition \ref{LEM:LINEARCYCLIC}]
Fix $L \in \NN$. For any $L$-cycle PE-candidate $q_e^L$ with cycle length $\tau_L$, the trajectory of queue $k$ is bounded from below by $0$ and from above by
$M_k := \frac{1}{2} (1-\rho_k) \lambda_k \tau_L^2$, so that
\begin{equation*}
||\psi_k(q_{e,k}^L)||_{\tau_L} \leq \psi_k\of{M_k} , \quad k \in \mathcal K .
\end{equation*}
Hence, for any $\ep > 0$, there exist piecewise linear functions $p_k$ and $h_k$, both mapping $\RR_+$ to itself,
such that for all $y \in [0, \psi_k\of{M_k}]$,
\begin{equation*}
0 < p_k (y) - \psi_k (y) < \ep/K \quad \text{and} \quad 0 <  \psi_k (y) - h_k (y) < \ep/K   , \quad k \in \mathcal{K} .
\end{equation*}
Let $p, h: \RR_+^K \arr \RR_+$ be defined for $x \in \RR_+^K$ via $p(x) := \sum_{k \in \mathcal{K}} p_k(x_k)$ and $h(x) := \sum_{k \in \mathcal{K}} h_k(x_k)$.
It follows that
\begin{equation*}
0 < \frac{1}{\tau_L} \int_0^{\tau_L} p (q_{e}^L(u)) du  - \frac{1}{\tau_L} \int_0^{\tau_L} \psi (q_{e}^L(u)) du < \ep ,
\end{equation*}
and
\begin{equation*}
0 < \frac{1}{\tau_L} \int_0^{\tau_L} \psi (q_{e}^L(u)) du - \frac{1}{\tau_L} \int_0^{\tau_L} h (q_{e}^L(u)) du  < \ep .
\end{equation*}
Consider two $L$-cycle optimization problems, denoted by $P_p$ and $P_h$,
which replace the objective function $\psi$ in problem \eqref{eq:FluidOpt2} with $p$ and $h$, respectively.
Since $q_{exh}$ is optimal for both $P_p$ and $P_h$ by Lemma \ref{lem:FCPPieceWiseLinearCost}
and $\ep$ is arbitrary, $q_{exh}$ is a solution to $L$-cycle optimization problem \eqref{eq:FluidOpt2}.
As the latter statement holds for all $L \in \NN$, $q_{exh}$ is a solution to the global optimization problem \eqref{eq:FluidOpt}.
\end{proof}

\subsection{Proof of Lemma \ref{lem:FCPPieceWiseLinearCost}} \label{apProofLemmaPWL}
For each $k \in \mathcal{K}$, let $\psi_k$ be in the form of \eqref{eq:95},
with irregular points $\{\af^{(\ell)}_k : 1 \le \ell \le N_k\}$ and coefficients $\{\psi^{(\ell)}_k : 1 \le \ell \le N_k\}$, for some $N_k \in \NN$.
Fix $L \in \NN$. To show that $q_{exh}$ is a solution of the $L$-cycle optimization problem \eqref{eq:FluidOpt2},
we consider a relaxed problem, in which each queue is optimized without consideration of all other queues.
To this end, for each $k \in \mathcal K$,
we consider the following relaxation to \eqref{eq:FluidOpt2}:
\begin{equation} \label{eq:FluidOpt4}
\begin{split}
\min_{q^L \in \mathcal Q^L} \quad  &\frac{1}{\tau_L} \int_{0}^{\tau_L} \psi_k(q^L_{k}(u)) \, du.
\end{split}
\end{equation}
Note that we have $K$ different optimization problems of the form \eqref{eq:FluidOpt4}---one for each $k \in \mathcal K$.
For each of these $K$ optimization problems, let $q^L_{e,k}$ denote the $k$th component of a solution to the problem \eqref{eq:FluidOpt4}.
The closed curve $q^L_e := (q^L_{e,k}, k \in \mathcal K)$
necessarily gives a lower bound for the optimal objective value in the $L$-cycle optimization problem \eqref{eq:FluidOpt2}.
(However, $q^L_e$ needs not be an element of $\mathcal Q^L$, as it may not be a bona-fide $L$-cycle PE-candidate.)
The statement of the lemma will therefore follow by proving that $q_{exh}$, which is a feasible solution to \eqref{eq:FluidOpt2},
is a solution to \eqref{eq:FluidOpt4} for each $k \in \mathcal K$.

Fix $k \in \mathcal{K}$. For $\psi_k^{(0)} := 0$, let
\begin{equation*}
h_k^{(\ell)} := \psi_k^{(\ell)}  - \psi_k^{(\ell-1)} , \quad 1 \le \ell \le N_k.
\end{equation*}
The objective function in \eqref{eq:FluidOpt4} satisfies
\begin{equation*}
\begin{split}
\frac{1}{\tau_L} \int_{0}^{\tau_L} \psi_k(q^L_{k}(s)) \, ds
& = \frac{1}{\tau_L} \int_{0}^{\tau_L} \left[ \psi_k^{(1)} q^L_{k}(s) \mathbf 1_{\{q^L_{k}(s) \geq \alpha_k^{(1)}\}} + (\psi_k^{(2)} - \psi_k^{(1)}) q^L_{k}(s) \mathbf 1_{\{q^L_{k}(s) \geq \alpha_k^{(2)}\}} \right. \\
&\left. \quad + \cdots + (\psi_k^{(N_k)} - \psi_k^{(N_k-1)}) q^L_{k}(s) \mathbf 1_{\{q^L_{k}(s) \geq \alpha_k^{(N_k)}\}}   \right]  \, ds \\
& = \frac{1}{\tau_L} \int_{0}^{\tau_L} \left[ h_k^{(1)} q^L_{k}(s)\mathbf 1_{\{q^L_{k}(s) \geq \alpha_k^{(1)}\}}  + h_k^{(2)} q^L_{k}(s) \mathbf 1_{\{q^L_{k}(s) \geq \alpha_k^{(2)}\}} \right. \\
&\left. \quad + \cdots + h_k^{(N_k)} q^L_{k}(s) \mathbf 1_{\{q^L_{k}(s) \geq \alpha_k^{(N_k)}\}}   \right]  \, ds \\
& =  \frac{1}{\tau_L} \sum_{\ell = 1}^{N_k} h_k^{(\ell)} \int_{0}^{\tau_L}  q^L_{k}(s) \mathbf 1_{\{q^L_{k}(s) \geq \alpha_k^{(\ell)}\}} ds .
\end{split}
\end{equation*}

Then for
\begin{equation*}
A^{(\ell)}\of{q^L_{k}} := \int_{0}^{\tau_L}  q^L_{k}(s) \mathbf 1_{\{q^L_{k}(s) \geq \alpha_k^{(\ell)}\}} ds , \quad 1 \le \ell \le N_k,
\end{equation*}
\eqref{eq:FluidOpt4} is equivalent to
\begin{equation} \label{eq:FluidOpt6}
\begin{split}
\min_{q^L \in \mathcal Q^L} \quad  &\frac{1}{\tau_L} \sum_{\ell = 1}^{N_k}  h_k^{(\ell)} A^{(\ell)}\of{q^L_{k}}  \\
s.t. \quad
&A^{(\ell)}\of{q^L_{k}} = \int_{0}^{\tau_L}  q^L_{k}(s) \mathbf 1_{\{q^L_{k}(s) \geq \alpha_k^{(\ell)}\}} ds , \quad 1 \le \ell \le N_k. \\
\end{split}
\end{equation}
Now, $A^{(\ell)}(q^L_{k})$ can be further partitioned into $L$ sub-areas over each table cycle. In particular, let
\begin{equation*}
a^{(\ell, m)}\of{q^L_{k}} := \int_{u^{(m-1)}}^{u^{(m)}}  q^L_{k}(s) \mathbf 1_{\{q^L_{k}(s) \geq \alpha_k^{(\ell)}\}} ds , \quad 1 \le m \le L, \quad 1 \le \ell \le N_k,
\end{equation*}
where $u^{(m-1)}$ denotes the beginning epoch of the $m$th {\em table} cycle.
(To facilitate the notation henceforth, the superscript $(m)$ in $u^{(m)}$ is an index for table cycles of $q^L$ over the cycle length $[0, \tau_L]$.
This is different from the convention elsewhere in the paper, e.g., in \eqref{fluid implicit}, where $(m)$ was indexing server cycles.)
We can then write
\begin{equation*}
A^{(\ell)}\of{q^L_{k}} = \sum_{m=1}^L a^{(\ell,m)}\of{q^L_{k}}  , \quad 1 \le \ell \le N_k,
\end{equation*}
so that \eqref{eq:FluidOpt6} can be equivalently written as
\begin{equation} \label{eq:FluidOpt8}
\begin{split}
\min_{q^L \in \mathcal Q^L} \quad  &\frac{1}{\tau_L}  \sum_{\ell = 1}^{N_k}  h_k^{(\ell)}  \sum_{m=1}^L a^{(\ell,m)}\of{q^L_{k}}   \\
s.t. \quad
&a^{(\ell, m)}\of{q^L_{k}} = \int_{u^{(m-1)}}^{u^{(m)}}  q^L_{k}(s) \mathbf 1_{\{q^L_{k}(s) \geq \alpha_k^{(\ell)}\}} ds ,
\quad 1 \le m \le L, \quad 1 \le \ell \le N_k. \\
\end{split}
\end{equation}

For
\begin{equation*}
\tau^{(\ell, m)}(q_{k}^L) := \int_{u^{(m-1)}}^{u^{(m)}}  \mathbf 1_{\{q^L_{k}(s) \geq \alpha_k^{(\ell)}\}} ds, \quad 1 \le m \le L, \quad 1 \le \ell \le N_k,
\end{equation*}
we have that
\begin{equation} \label{eq:AreaBound}
a^{(\ell, m)}\of{q^L_{k}} \geq \frac{1}{2} (1-\rho_k) \lambda_k \of{\tau^{(\ell, m)}(q_{k}^L) }^2 ,
\end{equation}
and
\begin{equation} \label{eq:LengthBound}
\sum_{m=1}^L \tau^{(\ell, m)}(q_{k}^L) \geq M^{(\ell)} ,
\end{equation}
where
\begin{equation*}
M^{(\ell)} := \max{\offf{\tau_L - L \of{\frac{\alpha_k^{(\ell)} }{\lambda_k} + \frac{\alpha_k^{(\ell)} }{\mu_k - \lambda_k}}, \, 0}}.
\end{equation*}
Adding the inequalities in \eqref{eq:AreaBound} and \eqref{eq:LengthBound} to the constraints of problem \eqref{eq:FluidOpt8} does not change its feasible region,
and yields the following equivalent formulation
\begin{align*}
\stepcounter{equation}\tag{\theequation}\label{eq:FluidOpt9}
\min_{q^L \in \mathcal Q^L} \quad  &\frac{1}{\tau_L}  \sum_{\ell = 1}^{N_k}  h_k^{(\ell)}  \sum_{m=1}^L a^{(\ell,m)}\of{q^L_{k}}   \\
s.t. \quad
&a^{(\ell, m)}\of{q^L_{k}} = \int_{u^{(m-1)}}^{u^{(m)}}  q^L_{k}(s) \mathbf 1_{\{q^L_{k}(s) \geq \alpha_k^{(\ell)}\}} ds , \\
&\tau^{(\ell, m)}(q_{k}^L) = \int_{u^{(m-1)}}^{u^{(m)}}  \mathbf 1_{\{q^L_{k}(s) \geq \alpha_k^{(\ell)}\}} ds , \\
&a^{(\ell, m)}\of{q^L_{k}} \geq \frac{1}{2} (1-\rho_k) \lambda_k \of{\tau^{(\ell, m)}(q_{k}^L) }^2 , \quad 1 \le m \le L, \quad 1 \le \ell \le N_k  \\
&\sum_{m=1}^L \tau^{(\ell, m)}(q_{k}^L) \geq M^{(\ell)} , \quad 1 \le \ell \le N_k .
\end{align*}
Next, let
$$\boldsymbol \tau := (\tau^{(\ell, m)}, 1 \le m \le L, 1 \le \ell \le N_k) \qandq \boldsymbol a := (a^{(\ell, m)}, 1 \le m \le L, 1 \le \ell \le N_kk).$$
We consider the following relaxed problem by dropping the first three constraints in problem \eqref{eq:FluidOpt9}
\begin{equation} \label{eq:FluidOpt10}
\begin{split}
\min_{ \boldsymbol \tau, \, \boldsymbol a } \quad  &\frac{1}{\tau_L}   \sum_{\ell = 1}^{N_k}  h_k^{(\ell)} \sum_{m=1}^L a^{(\ell,m)} \\
s.t. \quad
&a^{(\ell, m)} \geq \frac{1}{2} (1-\rho_k) \lambda_k \of{\tau^{(\ell, m)} }^2 , \quad1 \le m \le L, \quad 1 \le \ell \le N_k \\
&\sum_{m=1}^L \tau^{(\ell, m)} \geq M^{(\ell)} , \quad 1 \le \ell \le N_k . \\
\end{split}
\end{equation}
It follows from observation (and can be verified by solving the Karush-Kuhn Tucker equations)
that the solution to problem \eqref{eq:FluidOpt10}, denoted by $(\boldsymbol \tau_*, \boldsymbol a_*)$, has elements
\begin{equation*}
\begin{split}
\tau_*^{(\ell, m)} =  M^{(\ell)}  / L  \qandq
a_*^{(\ell, m)} = \frac{1}{2} (1-\rho_k) \lambda_k \of{\tau_*^{(\ell, m)} }^2 , \quad  1 \le m \le L, \quad 1 \le \ell \le N_k.
\end{split}
\end{equation*}

Note that problem \eqref{eq:FluidOpt10} is a relaxation of problem \eqref{eq:FluidOpt9} because for any feasible solution
$(\boldsymbol \tau, \boldsymbol a)$ to \eqref{eq:FluidOpt10}, there does not necessarily exist a corresponding $q^L \in \mathcal Q^L_k$ such that,
for all $1 \le m \le L$ and $1 \le \ell \le N_k$,
\begin{equation} \label{eq:RelaxedConstraints}
\begin{split}
&a^{(\ell, m)} = \int_{u^{(m-1)}}^{u^{(m)}}  q^L_{k}(s) \mathbf 1_{\{q^L_{k}(s) \geq \alpha_k^{(\ell)}\}} ds \qandq
\tau^{(\ell, m)} = \int_{u^{(m-1)}}^{u^{(m)}}  \mathbf 1_{\{q^L_{k}(s) \geq \alpha_k^{(\ell)}\}} ds.
\end{split}
\end{equation}
Hence, if there exists a $q^L_{e} \in \mathcal Q^L_k$ such that \eqref{eq:RelaxedConstraints} holds for $(\boldsymbol \tau_*, \boldsymbol a_*)$, then $q^L_{e}$
is a solution to \eqref{eq:FluidOpt9}.

Let $q_{exh,k}$ denote the trajectory of queue $k$ in $q_{exh}$. (Note that $q_{exh} \in \mathcal Q^L_k$.)
It can be verified that \eqref{eq:RelaxedConstraints} indeed holds for $(q_{exh,k}, \boldsymbol \tau_*, \boldsymbol a_*)$, namely,
for all $1 \le m \le L$ and $1 \le \ell \le N_k$,
\begin{equation*}
\begin{split}
a_*^{(\ell, m)} = \int_{u^{(m-1)}}^{u^{(m)}}  q_{exh, k}(s) \mathbf 1_{\{q_{exh,k}(s) \geq \alpha_k^{(\ell)}\}} ds \qandq
\tau_*^{(\ell, m)} = \int_{u^{(m-1)}}^{u^{(m)}}  \mathbf 1_{\{q_{exh,k}(s) \geq \alpha_k^{(\ell)}\}} ds.
\end{split}
\end{equation*}
Thus, $q_{exh}$ solves \eqref{eq:FluidOpt9}, and therefore also the equivalent problem \eqref{eq:FluidOpt4}. In turn,
$q_{exh}$  is a solution to the $L$-cycle optimization problem \eqref{eq:FluidOpt2}.
Since the arguments hold for each $L \in \NN$, $q_{exh}$ is optimal to \eqref{eq:FluidOpt}. \hfill \qed

\end{appendix}

\bibliographystyle{ormsv080_abv}
\bibliography{bibfile}

\end{document}